\theoremstyle{plain}
\newtheorem{theo}{Theorem}[section]
\newtheorem{lem}[theo]{Lemma}
\newtheorem*{lem*}{Lemma}
\newtheorem{prop}[theo]{Proposition}
\theoremstyle{remark}
\newtheorem{rem}[theo]{Remark}
\theoremstyle{definition}
\newcommand{\ga}{\gamma}
\newcommand{\Si}{\Sigma}
\newcommand{\al}{\alpha}
\newcommand{\si}{\sigma}
\newcommand{\om}{\omega}
\newcommand{\Ga}{\Gamma}
\newcommand{\be}{\beta}
\newcommand{\la}{\lambda} 
\newcommand{\Om}{\Omega}
\newcommand{\ep}{\epsilon}
\newcommand{\Hilb}{{\mathcal{H}}}
\newcommand{\C}{\mathbb C}
\newcommand{\R}{\mathbb R}
\newcommand{\Z}{\mathbb Z}
\newcommand{\T}{\mathbb T}
\newcommand{\su}{\operatorname{SU}_2}
\newcommand{\mo}{{\mathcal{M}}}  
\newcommand{\Ad}{\operatorname{Ad}} 
\newcommand{\tr}{\operatorname{Tr}}
\newcommand{\Ci}{{\mathcal{C}}^{\infty}}  
\newcommand{\CS}{\operatorname{CS}}  
\newcommand{\alt}{\operatorname{alt}} 
\newcommand{\ab}{\operatorname{ab}} 
\newcommand{\ir}{\operatorname{irr}} 
\newcommand{\id}{\operatorname{Id}}
\newcommand{\MS}{\operatorname{MS}}
\newcommand{\cl}{\operatorname{cl}}
\title{Torus knot state asymptotics}
\author{L. Charles\footnote{Institut de
    Math{\'e}matiques de Jussieu (UMR 7586), Universit{\'e} Pierre et
    Marie Curie -- Paris 6, Paris, F-75005 France.}}
\begin{document}

\maketitle

\begin{abstract}
The state of a knot is defined in the realm of Chern-Simons topological quantum field theory as a holomorphic section on the $\su$-character manifold of the peripheral torus. We compute the asymptotics of the torus knot states in terms of the Alexander polynomial, the Reidemeister torsion and the Chern-Simons invariant. We also prove that the microsupport of the torus knot state is included in the character manifold of the knot exterior. As a corollary we deduce the Witten asymptotics conjecture for the Dehn filling of the torus knots and asymptotic expansions for the colored Jones polynomials. 
\end{abstract}

\bibliographystyle{alpha}

\section{Introduction}

In the seminal paper \cite{Wi}, Witten developed a quantum field theory with Chern-Simons action, which allowed him to reinterpret Jones polynomials and to introduce new invariants of three dimensional manifolds. He showed that the semi-classical limit of this theory involves Chern-Simons invariant and Reidemeister torsion. 

In collaboration with Julien March{\'e} \cite{LJ1, LJ2}, we applied methods of semi-classical analysis to investigate this limit. We conjectured that the knot states are Lagrangian state supported by the character manifold of the knot exterior with as symbol the Reidemeister torsion and phase the Chern-Simons invariant. As a consequence the Witten-Reshetikhin-Turaev invariant of the Dehn filling of the knots have the asymptotic expansion predicted by Witten. We proved our conjecture for the figure eight knot in \cite{LJ1, LJ2}. In the present paper, we prove it for the torus knots. 

The knot state is computed from the colored Jones polynomial of the knot. The asymptotic behavior of the colored Jones polynomial of the torus knot has been considered in many works, including the Melvin-Morton paper \cite{melvin_morton},  the Volume conjecture of Kashaev \cite{kt} and its further development by Murakami \cite{Murakami}, Hikami \cite{hikami04}, and Dubois-Kashaev \cite{dk}, cf. \cite{hikami_murakami} for more references. 
Our results are rather different in that we do not consider directly the colored Jones polynomials, but the knot state. Let us be more precise.


Following \cite{retu}, associated to the Lie group $\su$ and to a positive integer $k$ is a topological quantum field theory (tqft), that is a functor from a cobordism category to the category of Hermitian vector spaces. Let $K$ be a knot in the three dimensional sphere. Denote by $E_K$ the complement of an open tubular neighborhood of $K$ and by  $\Si= \partial E_K$ the peripheral torus of $K$. The tqft associates to $\Sigma$ a $(k-1)$ dimensional Hermitian space $V_k(\Sigma)$ and to $E_K$ a vector $Z_k(E_K)\in V_k(\Sigma)$, that we call the knot state.
$V_k( \Si)$ admits an orthonormal basis $(e_\ell)$ such that the coefficient of the knot state are evaluations of the colored Jones polynomials $(J_\ell)$ of the knot,
$$Z_k ( E_K ) =  
 \sqrt{ \frac{2}{k}} \sin(\pi/k ) \sum_{\ell =1, \ldots, k-1} J_\ell (-e^{i\pi/2k}) e_{\ell}$$
Here the Jones polynomials are normalized in such a way that for the trivial knot $J_{\ell}(t) =   ( t^{2\ell} - t^{-2 \ell}) ( t^2 - t ^{-2})^{-1}$

Let $\mo(\Sigma)$ be the space of representations of the fundamental group $\pi ( \Si)$ in $\su$ up to conjugation. 
The space $V_k(\Sigma)$ is isomorphic to the geometric quantization $\Hilb_k^{\alt}$ at level $k$ of $\mo(\Sigma)$. The moduli space $\mo(\Sigma)$  is a complex orbifold with four singular points corresponding to the central representations. It it the base of two holomorphic line (orbi-)bundles, the Chern-Simons bundle $L_{\CS}$ and a half-form bundle $\delta$ respectively. The quantum space $\Hilb_k^{\alt}$ is the space of holomorphic sections of $L^k _{\CS} \otimes \delta$. It can be described very explicitly as a space of theta functions. In this paper we will identify $V_k( \Si)$ and $\Hilb_k^{\alt}$ using a special class of isomorphisms introduced in \cite{LJ1}. 

Let $\mo (E_K)$ be the moduli space of representations $\pi ( E_K) \rightarrow \su$ up to conjugation. $\mo (E_K)$ is the union of $\mo^{\ab }(E_K)$ and $\mo ^{\ir} (E_K)$ which consist respectively of abelian and irreducible representations. 
$\mo^{\ab }(E_K)$ is (homeomorphic to) a closed interval, its endpoints being central representations and its interior consisting in abelian non central representations. For the torus knot with parameter $(a,b)$, the closure of $\mo^{\ir} (E_K)$ in $\mo (E_K)$ is the disjoint union of  $(a-1) (b-1) /2$ closed intervals. The interior of each of these intervals consists of irreducible representations and the endpoints are abelian non central representations. So there are exactly $(a-1) (b-1)$ abelian representations which are limits of irreducible ones. Denote by $X_K$ the set of these representations.

Since the boundary of $E_K$ is the peripheral torus $\Si$, there is a natural map $r$ from $\mo ( E_K)$ to $\mo ( \Si)$. For torus knots, the restrictions of $r$ to $\mo^{\ab} (E_K)$ and to each component of the closure of $\mo^{\ir} (E_K)$ are embeddings. Nevertheless $r$ is not injective.

\begin{theo} \label{theo:intro}
Let $K$ be a torus knot with parameter $(a,b)$ and let $\tau$ be a non central representation in $\mo (\Si)$. Then if $\tau \notin r( \mo ( E_K))$,
$$ Z_k (E_K) ( \tau  ) = O(k^{-N}), \qquad \forall N $$
If $\tau \in r( \mo (E_K)) \setminus r(X_K)$, then we have an asymptotic expansion
\begin{gather*} 
   Z_k (E_K)  ( \tau) = \sum_{\rho \in r^{-1}( \al) \cap \mo^{\ir} (E_K)} \CS^k ( \rho)  \frac{k^{3/4 } \mu_k } { 4 \pi ^{3/4}} \sqrt{\T ( \rho)}   \\
  +   \frac{e^{-i \frac{\pi}{4}}}{\sqrt{2}} \sum_{\rho \in r^{-1}( \al) \cap \mo^{\ab} (E_K)}  \CS^k ( \rho)  \Biggl( \frac{\la_{\rho} - \bar{\la}_\rho}{\Delta_K(\la_{\rho}^2)} + \sum _{n \in \Z_{>0}} k^{-n} a_n(\rho)  \Biggr) \Om_{\la}(\tau)  +O(k^{-\infty}) 
\end{gather*}
where for any $\rho \in \mo( E_K)$ 
\begin{itemize} 
\item[-] $\CS ( \rho)$ is the Chern-Simons invariant of $\rho $,
\item[-] $\sqrt{ \T ( \rho)}$ is one of the four square roots of the Reidemeister torsion of $\rho$ if $\rho \in \mo^{\ir} ( E_K)$ and $\mu_k = \exp ( i \frac{\pi}{2k} ( - \frac{ a^2 + b^2}{ ab} + \frac{ab}{2} )) = 1 +O(k^{-1})$
\item[-]  $\la_{\rho}$ is an eigenvalue of $\rho ( \mu)$ with $\mu$ a meridian, $\Delta_{K} \in \Z [ t^{\pm 1} ]$ is the Alexander polynomial of $K$, $(a_n ( \rho), n>0)$ is a family of complex numbers, if $\rho \in \mo^{\ab} ( E_K)$.
\end{itemize}
\end{theo}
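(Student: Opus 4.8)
\emph{Step 1 (explicit form of the state).} The plan is to start from the Rosso--Jones (Morton) presentation of the colored Jones polynomial of the torus knot $T(a,b)$ as a finite sum of monomials with a prefactor quadratic in the color $\ell$ and denominator $t^2-t^{-2}$. Substituting $t=-e^{i\pi/2k}$ and feeding this into the defining formula for $Z_k(E_K)$, each coefficient $\langle Z_k(E_K),e_\ell\rangle$ becomes, up to an explicit unimodular prefactor, a normalised Gaussian sum $\sum_{|r|<\ell/2}\varepsilon_r\,e^{\frac{i\pi}{2k}Q(\ell,r)}$ with $Q$ a fixed quadratic form and a summation range growing linearly with $\ell$. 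The key manipulation is Gauss-sum reciprocity (equivalently Poisson summation in $r$), which rewrites this inner sum as a \emph{fixed} finite sum modulo $O(k^{-\infty})$, uniformly in $\ell$: one ``bulk'' term for each of the $(a-1)(b-1)/2$ irreducible arcs and two ``boundary'' terms from the endpoints $r=\pm\ell/2$. This yields a decomposition
$$Z_k(E_K)=\sum_{j}Z_k^{j,\ir}+\sum_{j}Z_k^{j,\ab}+O(k^{-\infty}),$$
where each summand has coefficients of the form $b_{k,\ell}\,e^{ik\phi_j(\ell/k)}$ with $\phi_j$ a smooth phase --- \emph{purely quadratic} for the irreducible terms --- and $b_{k,\ell}$ a classical symbol in $k^{-1}$.

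\emph{Step 2 (microsupport).} By the theory of Lagrangian states on the moduli space of the torus from \cite{LJ1}, a state $\sum_\ell b_{k,\ell}\,e^{ik\phi(\ell/k)}e_\ell$ has microsupport equal to the image in $\mo(\Si)$ of the curve $\{d\phi\}$, where a neighbourhood of the regular locus of $\mo(\Si)$ is identified with a twisted cotangent bundle via the chosen isomorphism $V_k(\Si)\simeq\Hilb_k^{\alt}$. Computing $d\phi_j$ for the phases of Step~1 and comparing with the explicit parametrisation of the arcs of $\mo(E_K)$ coming from the classification of $\su$-representations of $\langle x,y\mid x^a=y^b\rangle$, one identifies $\bigcup_j\MS(Z_k^{j,\ir})$ with $r(\mo^{\ir}(E_K))$ and $\bigcup_j\MS(Z_k^{j,\ab})$ with $r(\mo^{\ab}(E_K))$. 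Hence if $\tau\notin r(\mo(E_K))$ every phase is non-stationary at $\tau$ and $Z_k(E_K)(\tau)=O(k^{-\infty})$, which is the first assertion.

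\emph{Step 3 (evaluation on the character variety).} For $\tau\in r(\mo(E_K))\setminus r(X_K)$ every preimage $\rho$ is a regular point, and the stationary phase lemma for Lagrangian states \cite{LJ1,LJ2} evaluates $Z_k(E_K)(\tau)$ as a sum over $\rho\in r^{-1}(\tau)$ of one-dimensional Gaussian integrals transverse to the Lagrangian. For $\rho$ irreducible the phase is quadratic and the image arc is straight, so this integral is a Gauss sum \emph{evaluated exactly}: one gets $\CS^k(\rho)\,\frac{k^{3/4}\mu_k}{4\pi^{3/4}}\sqrt{\T(\rho)}$ with no subleading corrections, once the transverse Hessian is identified with the Reidemeister torsion (the general principle being that, when the phase is the Chern--Simons action, the transverse Hessian computes the torsion --- here checked directly against the known torsion of the torus-knot representations). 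For $\rho$ abelian the amplitude carries the denominator $t^2-t^{-2}$, hence a nonconstant symbol, and the Gaussian integral produces a genuine asymptotic series whose leading coefficient one computes to be $\frac{\la_\rho-\bar\la_\rho}{\Delta_K(\la_\rho^2)}$ --- the value of the torsion along the abelian stratum, governed by the Alexander polynomial (Melvin--Morton) --- the $a_n(\rho)$ being the higher terms, while the powers $k^{3/4}$ and $k^0$ are the bookkeeping of the transverse Gaussian integration and the half-form bundle $\delta$, the abelian family being of lower order as it comes from the boundary terms of Step~1. The vanishing of $\Delta_K(\la_\rho^2)$ precisely at $X_K$, where the abelian and irreducible strata meet, is why those points are excluded.

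\emph{Step 4 (phases, and the main obstacle).} The critical value of the total phase at the stationary point attached to $\rho$ must be identified with $\CS(\rho)$: for torus knots the Chern--Simons invariants of the flat $\su$-connections on $E_K$ are explicit rationals that coincide with the critical values of Morton's quadratic phases, and this is also forced by the fact that an admissible isomorphism $V_k(\Si)\simeq\Hilb_k^{\alt}$ intertwines the gluing with the Chern--Simons line. The main difficulties are (i) the Gauss-sum reciprocity of Step~1, where the parities of $a$, $b$, $k$ and the half-integer shifts in Morton's formula must be tracked to pin down the eighth roots of unity --- exactly the source of the $e^{-i\pi/4}/\sqrt2$ and of the choice among the four square roots of $\T(\rho)$, as well as of the residual metaplectic/framing factor $\mu_k$; and (ii) the identification of the transverse Hessian with the Reidemeister torsion, requiring the torsion of the character variety of the torus-knot exterior and its compatibility with the symplectic and half-form data on $\mo(\Si)$. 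The orbifold points of $\mo(\Si)$ are irrelevant since $\tau$ is non-central, but one must still check that each stationary point $\rho$ stays in the regular locus, which fails exactly at the arc endpoints --- consistent with excluding $X_K$.
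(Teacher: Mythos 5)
Your route is genuinely different from the paper's: you propose to work directly on Morton's sum, applying Gauss-sum reciprocity/Poisson summation in the inner variable $r$ and then stationary phase, whereas the paper never performs a saddle-point analysis of that sum. Instead it uses Hikami's $q$-difference relation to derive an exact inhomogeneous functional equation for the state, $Z_k=(q^{-1}M)^{-2ab}L^{-2}Z_k+\dots$, changes to the coarser lattice $D\mu\Z\oplus\la\Z$ with $D=2ab$, solves the resulting recursion exactly in the adapted bases $(\Psi_n)$ and $(\Phi_\ell)$ (Theorems \ref{theo:change-lattice} and \ref{theo:quantum}), and only then identifies the coefficients $\ga_\ell(k)$ topologically by showing that the candidate expressions built from $\CS(\rho)$ and the square root of the torsion satisfy the same recursion in $\ell$, with initial data $\ga_{-1}=0$, $\ga_0=\tfrac12\be_0$. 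The abelian part is handled by inverting $\id-R^{-D}$ as a Toeplitz operator, which is where the Alexander polynomial appears.

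The genuine gap is in your Step 1, and it is precisely where the paper's work lies. The sum over $r$ is an \emph{incomplete} Gauss sum whose range grows with $\ell$; reciprocity is not exact for it, and the bulk content changes as $\ell/2k$ crosses the lattice $\tfrac{1}{2ab}\Z$: over some intervals $I_j$ the preimage $r^{-1}(I_j)$ is empty and the state must be $O(k^{-\infty})$ there, while elsewhere the covering degree of $r$ over $I_j$ varies with $j$. Your claim of a fixed, $\ell$-uniform decomposition with one exactly evaluated bulk term per arc and controlled boundary terms therefore assumes exactly what must be proved: that the interval-by-interval data produced by reciprocity glue into a single Lagrangian state on each whole arc $r(\cl(I_{\al,\be}))$, with phase matching the flat Chern--Simons section across the transition points $\tfrac{1}{2ab}(a\Z\cup b\Z)\la$, and with contributions appearing at $\ell=k^-_{\al,\be}$ and cancelling at $\ell=k^+_{\al,\be}$. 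In the paper this is the content of the recursion with jumps $\be_\ell(k)$ proportional to $\sin(\pi\ell/a)\sin(\pi\ell/b)$ and of the cancellation identity for $\sin(\pi k^{\pm}_{\al,\be}/a)\sin(\pi k^{\pm}_{\al,\be}/b)$; likewise, the absence of subleading corrections in the irreducible contribution is there a consequence of the exact algebraic solution, not of a stationary-phase estimate, so in your scheme it needs an actual argument (exactness of the complete-Gauss-sum piece plus all-order control, uniform in $\ell$ away from $r(X_K)$, of the boundary remainders). You correctly flag the eighth-root-of-unity bookkeeping and the Hessian-versus-torsion identification, but the interval-transition/gluing problem is the missing core, and without it Steps 2--4 rest on an unestablished decomposition.
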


Let us comment the various ingredients. Chern-Simons invariants for three-dimensional oriented compact manifolds with boundary have been defined in \cite{ramadas_singer_weitsman}. In particular, any representation $\rho \in \mo (E_K)$ has a Chern-Simons invariant $\CS ( \rho) $ which is a unitary vector in the fiber of $L_{\CS} \rightarrow \mo (\Si)$ at $r ( \rho)$.

Any irreducible representation $\rho \in \mo^{\ir} (E_K)$ has a Reidemeister Torsion $\T ( \rho)$ which is a linear form of $H^1 (E_K, \Ad_{\rho} )$ well-defined up to sign. $ H^1 (E_K, \Ad_{\rho} )$ is naturally isomorphic to the tangent space of $\mo^{\ir} (E_K)$ at $\rho$. Any vector  in $T^*_{\rho} \mo^{\ir} (E_K) $ admits a square root well-defined up to sign in $\delta_{\rho}$. So the Reidemeister $\T ( \rho)$  has exactly four square root in  $\delta_{\rho}$. 

Our normalization for the Alexander polynomial is such that $\Delta_K(1) = 1$ and $\Delta_K( t) = \Delta_{K} ( t^{-1})$.  An abelian representation $\rho$ in $\mo ^{\ab} (E_K)$ belongs to $X_K$ if and only if the squares of the eigenvalues of $\rho ( \mu)$ are roots of $\Delta_K$. So the leading term of the asymptotic expansion is not defined precisely when $\tau \in r(X_K)$. 

Finally, $\Om_{\la} ( \tau)$ is a vector in the fiber $\delta_{\tau}$ of the half-form bundle, which depends (up to a power of $i$) on the choice of the isomorphism $V_k ( \Si) \simeq \Hilb_k^{\alt}$. It does not depend on the knot. 

We actually prove finer results which give the asymptotic expansion of the torus knot states at any representation $\tau \in \mo ( \Si) \setminus r(X_K)$ uniformly with respect to $\tau$, cf. Theorems \ref{theo:irreductible}, \ref{theo:abelian} and \ref{theo:abelien_irreductible}. This uniform asymptotic expansion contains the transition between $r( \mo (E_K))$ and the complementary set. As was proved in \cite{LJ2}, these results imply the Witten asymptotic expansion conjecture for Dehn fillings of the torus knot. For any three dimensional closed oriented manifold $M$, we denote by $Z_k(M)$ the Witten-Reshetikhin-Turaev invariant of $M$. 

\begin{theo} \label{theo:witten}
Let $M$ be a manifold obtained by Dehn surgery on a torus knot $K$. Assume that for any representation $\rho \in \mo (M)$ the restriction of $\rho$ to the knot exterior $E_K$ does not belong to $X_K$. Then

\begin{xalignat*}{2} 
 Z_k (M) &  = \sum_{\rho \in \mo (M)}  e^{i \frac{m (\rho) \pi }{4}} k^{n(\rho)} a(\rho,k) \CS ( \rho )^k  + O(k^{-\infty}) 
\end{xalignat*}
where for any $\rho \in \mo (M)$,  $m( \rho)$ is an integer, $n( \rho)  = 0$, $-1/2$ or $-3/2$ according to whether $\rho$ is irreducible, abelian non-central or central. Furthermore $(a ( \rho, k ))_k$ is a sequence which admits an asymptotic expansion of the form $a_0 ( \rho) + k^{-1} a_1 ( \rho ) + \ldots$. The leading coefficient is given by
$$ a_0 ( \rho) = \begin{cases} 2^{-1} \bigl( \T ( \rho) \bigr)^{1/2}  \text{ if $\rho$ is irreducible} \\ 
 2^{-1/2} \bigl( \T ( \rho) \bigr)^{1/2} \text{ if $\rho$ is abelian non-central} \\ 2^{1/2} \pi / p ^{3/2}    \text{ if $\rho$ is central.}\end{cases}$$
 where $(p,q)$ are the parameters of the surgery.  
\end{theo}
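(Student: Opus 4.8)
The plan is to derive Theorem~\ref{theo:witten} from the uniform asymptotic expansions of the torus knot state contained in Theorems~\ref{theo:irreductible}, \ref{theo:abelian} and \ref{theo:abelien_irreductible}, following the reduction carried out in \cite{LJ2}. Write $M = E_K \cup_\Si (S^1\times D^2)$ for the Dehn surgery with slope $(p,q)$, the solid torus being glued so that its meridian is sent to the $(p,q)$ curve of $\Si = \partial E_K$; then $\pi(M) = \pi(E_K)/\langle\langle \mu^p\la^q\rangle\rangle$ and, by Mayer--Vietoris, $\mo(M)$ is identified with the set of $\rho\in\mo(E_K)$ with $\rho(\mu^p\la^q)=1$, which by hypothesis avoids $X_K$. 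The functoriality of the tqft gives, up to the standard framing correction,
$$ Z_k(M) = \bigl\langle Z_k(E_K),\, Z_k(S^1\times D^2) \bigr\rangle_{V_k(\Si)} . $$
Under the identification $V_k(\Si) \simeq \Hilb_k^{\alt}$ used throughout the paper, $Z_k(S^1\times D^2)$ is a Lagrangian state whose microsupport is the curve $C_{p,q}\subset\mo(\Si)$ of representations sending the $(p,q)$ curve to the identity, with explicitly known symbol and with the Chern-Simons section as phase. So $Z_k(M)$ is a Hermitian pairing of two Lagrangian states in $\Hilb_k^{\alt}$, the second of which we control from Theorems~\ref{theo:irreductible}--\ref{theo:abelien_irreductible}.

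The next step is to evaluate this pairing by stationary phase. Since the microsupport of $Z_k(E_K)$ is contained in $r(\mo(E_K))$, the pairing localizes on $r(\mo(E_K)) \cap C_{p,q}$, and each point of this set is $r(\rho)$ for a unique $\rho\in\mo(M)$. Away from the central representations each such $\rho$ contributes a term of the form $e^{im(\rho)\pi/4} k^{n(\rho)} a(\rho,k)\,\CS(\rho)^k$: the Chern-Simons phase of the knot state pairs with that of the solid torus to give $\CS(\rho)^k$ (additivity of the Chern-Simons invariant under gluing, $M$ being closed), the half-form symbol $\sqrt{\T(\rho)}$ of the knot state pairs with the symbol of the solid torus state, and the Mayer--Vietoris gluing formula for the Reidemeister torsion (together with the torsion of the surgery solid torus) turns the result into $\tfrac12 \T(\rho)^{1/2}$ when $\rho$ is irreducible and $\tfrac1{\sqrt2}\T(\rho)^{1/2}$ when $\rho$ is abelian non-central; the existence of the full expansion $a_0(\rho)+k^{-1}a_1(\rho)+\cdots$ comes from combining the all-orders knot-state expansions with the all-orders stationary phase lemma. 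The power $n(\rho)$ and the index $m(\rho)$ are read off from the order of contact of $r(\mo(E_K))$ with $C_{p,q}$ at $r(\rho)$ and from the Maslov data of the two symbols: a transverse intersection in the smooth locus gives $n=0$ (irreducible case), while along the abelian branch $\mo^{\ab}(E_K)$ the contact is quadratic in the relevant direction, producing $n=-1/2$. The unitary factor $\mu_k = 1+O(k^{-1})$ and the knot-independent constants $e^{-i\pi/4}$ and $\Om_\la$ of Theorem~\ref{theo:intro} disappear into $m(\rho)$ and the $O(k^{-1})$ corrections.

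The central representations need a separate treatment and are, I expect, the main obstacle. They are the four orbifold points of $\mo(\Si)$, near which $\mo(\Si)$ looks like $\C/\{\pm1\}$, the smooth Lagrangian pairing lemmas break down, and both factors are singular: the abelian branch of $Z_k(E_K)$ runs into such a point and $C_{p,q}$ passes through it, winding with a multiplicity governed by $p$. Here one has to work in the explicit theta-function model of $\Hilb_k^{\alt}$ near the singular points, insert the precise description of the abelian part of $Z_k(E_K)$ from Theorem~\ref{theo:abelian} (this is where $\Delta_K$ enters), and compute the resulting one-dimensional Gaussian-type sum. This local computation is what yields the anomalous exponent $n(\rho) = -3/2$ and the leading coefficient $2^{1/2}\pi/p^{3/2}$, the factor $p^{-3/2}$ arising from the $p$-fold winding. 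Matching these local contributions with the stationary-phase contributions from the non-central points and summing over $\mo(M)$ gives the stated expansion; the hypothesis that no restriction meets $X_K$ is exactly what guarantees that every symbol appearing above is defined.
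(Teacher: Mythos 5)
Your proposal is correct and follows essentially the same route as the paper, which proves Theorem \ref{theo:witten} simply by invoking Section 4.2 of \cite{LJ2}: there the surgery formula $Z_k(M)=\langle Z_k(E_K), Z_k(S^1\times D^2)\rangle$ is combined with exactly this Lagrangian-state pairing and stationary-phase analysis (including the separate model computation at the central representations giving $n=-3/2$ and $2^{1/2}\pi/p^{3/2}$), fed with the knot-state asymptotics of Theorems \ref{theo:intro}, \ref{theo:irreductible}, \ref{theo:abelian} and \ref{theo:abelien_irreductible}. The only slip is the claim that each intersection point is $r(\rho)$ for a \emph{unique} $\rho\in\mo(M)$ --- $r$ is not injective for torus knots --- but this is harmless since the knot-state expansions already sum over the fibers of $r$.
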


The condition that no representation of $M$ restricts to a representation in $X_K$ is equivalent to the following assertion: for any integers  $\ell$ and $m$ such that $m$ is not divisible by $a$ or $b$, the surgery coefficient satisfies   $$\frac{p}{q} \neq 2 ab \frac{\ell }{m }.$$ 
Here $(a,b)$ are the parameters of the torus knot. 
By Moser \cite{moser}, the Dehn filling of the torus knot are Seifert manifolds. The Witten asymptotic expansion conjecture has already been proved for many Seifert manifolds, cf. the discussion in \cite{LJ2}. 

As another corollary, we obtain an asymptotic expansion for $J_{m} ( - e^{i \pi /2k})$ in the large $m$ and $k$ limit, in the regime where $m/k$ is bounded.  When $|\frac{m}{2k}| < \frac{1}{2ab}$, this asymptotic expansion is an analytic version of the Melvin-Morton-Rozanski theorem, the leading order term being a function of the Alexander polynomial. When $ \frac{1}{2ab}<  \frac{m}{2k} < \frac{1}{2} - \frac{1}{2ab} $, the leading order term involves the Reidemeister torsion and Chern-Simons invariant of the irreducible representations of the  knot exterior, cf. Theorem \ref{sec:Jones_pol_das}. 

To complete this introduction, let us discuss our proof. First the colored Jones polynomials of the torus knot have been computed by Morton in \cite{morton}. Hikami proved that they satisfy some $q$-difference relations \cite{hikami}. From these relations, one deduces equations satisfied by the knot state on which our study is based. These equations involve two characteristic sets $A_0$ and $A_{ab}$ in the moduli space $\mo( \Si) \simeq \R^2 / \Z^2 \rtimes \Z_2$  
$$ A_0 = \{ [ 0, x] /\; x \in \R\}, \quad A_{ab} = \{ [-ab x , x ]/\; x \in \R \} \cup \{ [-ab x , x + \tfrac{1}{2ab} ]/\; x \in \R \}. $$  
The set $A_0$ is the image $r ( \mo^{\ab} (E_K))$ whereas $A_{ab}$ contains $r ( \mo ^{\ir} (E_K))$. We proved in \cite{LJ1} that the knot state is $O(k^{-\infty})$ on the complementary set of $ A_0 \cup A_{ab}$. We also obtained the asymptotic expansion of the knot state on $A_0 \setminus A_{ab}$, the remaining set $A_{ab}$ is treated in the present paper. 

$A_{ab} \setminus A_0$ is the union  of $ab$ open intervals $(I_j )_j$. On each of them, the knot state behaves as a Lagrangian state similarly to a WKB solution of a differential equation. The difficulty is to understand what happens when we pass from one interval to  another. It is clear from Theorem \ref{theo:intro} that something non trivial must occur, because the restriction of $r$ from $r^{-1} (I_j) $ to $I_j$  is a covering which degree depends on $j$. For some intervals $I_j$, it even happens that $r^{-1} (I_j)$ is empty so that the knot state is $O(k^{-\infty})$ on $I_j$. Fortunately, we can solve this problem by using elementary techniques. Essentially we work with an adapted basis where we can compute explicitly the knot state and extract easily its asymptotics. It is surprising that we can in this way determine explicitly the asymptotics expansion on the irreducible representations up to a $O(k^{-\infty})$. To the contrary, the proof in \cite{LJ1} based on Toeplitz operators, just gives the leading term of the asymptotics.
The study at the intersection $I_0 \cap I_{ab}$ is more difficult. We consider only the points $x \in I_0 \cap I_{ab}$ whose preimage $r^{-1}(x)$ consists of regular points. The other points, which form the set $r(X_K)$, are not considered in this article. 

The paper is organized as follows. Section \ref{sec:geom-quant-tori} is devoted to the geometric quantization of tori, including the Heisenberg group representation and its semi-classical limit. We introduce the knot sates and the vector spaces $V_k ( \Si)$ in section \ref{sec:knot-state}. The analytical results are proved in section \ref{sec:knot-state-analysis}, their topological interpretation is made in Section \ref{sec:topol-invar}. 

{\bf Acknowledgments.}
This work would not have been possible without the contribution of Julien March{\'e}. I  would like also to thank Fr{\'e}d{\'e}ric Faure for his help on numerical simulations and on the quantum mechanics of the torus.

\section{On the geometric quantization of tori} \label{sec:geom-quant-tori}
\subsection{Finite Heisenberg group representations} \label{sec:finite-heis-group}

Consider a real two dimensional symplectic vector space $(E, \om)$ with a compatible linear complex structure $j$. Let $(\delta, \varphi)$ be half-form line, that is $\delta$ is a complex line and $\varphi $ an isomorphism from $\delta^{\otimes 2}$ to the canonical line 
\begin{gather} \label{eq:canonical_line}
K_j= \{ \alpha \in E^* \otimes \C/ \al ( j \cdot ) = i \al \}.
\end{gather}
Let $\al \in \Om^1 (E, \C)$ be given by $\al_x(y) = \frac{1}{2} \om (x,y)$. Denote by $L$ the trivial complex line bundle over $E$ endowed with the connection
$d + \frac{1}{i} \al$ and with the unique compatible holomorphic structure. Consider the trivial holomorphic line bundle with fiber $\delta$ and base $E$ and denote it by $\delta$ again.

Let $k$ be a positive integer. The Heisenberg group at level $k$ is $E \times U(1)$ with the product
\begin{gather}  \label{eq:loi_Heisenberg}
 (x,u).(y,v) = \Bigl( x+y, uv \exp \Bigl( \frac{ik}{2} \om (x,y)\Bigr)\Bigr).
\end{gather}
The same formula where $(y,v) \in L^k\otimes \delta \simeq E \times \C $ defines an action of the
Heisenberg group on $L^k \otimes \delta$. For any $x\in E$, we denote by $T_x^*$ the pull-back by
the action of $(x, 1)$, explicitly
\begin{gather} \label{eq:pull_back} (T_x^* \Psi )(y) = \exp \Bigl( -
i\frac{k}{2} \om (x, y) \Bigr) \Psi ( x+ y)
\end{gather} for any section $\Psi$ of $L^k \otimes \delta$. This operator preserves the space of holomorphic sections.

Let $N$ be a positive integer. Let $R$ be a lattice of $E$ with a basis $(e, f)$ such that $\om ( e,f ) = 2\pi N$. Then $ R \times \{ 1
\}$ is a subgroup of the Heisenberg group. Define $\Hilb_k^R$ as the space of $R$-invariant holomorphic sections of $L^k \otimes \delta$. So a holomorphic section $\Psi$ belongs to $\Hilb_k^R$ if and only if $ T_x ^* \Psi = \Psi$ for any $x \in R$. 
$\Hilb^R_k$ is a finite dimensional space with dimension $Nk$. It has a natural scalar product given by 
\begin{gather} \label{eq:scalar_product} \langle \Psi_1 , \Psi_2
\rangle = \int_{ D} \langle \Psi_1 (x), \Psi_2 (x) \rangle_{\delta} \;
|\om | (x), \qquad \Psi_1, \Psi_2 \in \Hilb_k
\end{gather} where $D$ is any fundamental domain of $R$. Here $\delta$ is endowed with the metric making the isomorphism $\varphi : \delta^2 \rightarrow K_j$ an isometry. 

Let $U_{2Nk}$ be the group of $2Nk$-th root of unity. $\frac{1}{Nk} R \times U_{2Nk}$ is a subgroup of the Heisenberg group, that we call a finite Heisenberg group. It is contained in the commutator of $R \times \{ 1 \}$, so it acts on $\Hilb_k^R$. By Theorem 2.2 in \cite{LJ1}, $\Hilb_k^R$ has an orthonormal basis $(\Psi_{\ell} ; \ell \in \Z/ N k \Z)$ satisfying 
\begin{gather} \label{eq:rep_heis}
 T^*_{e / Nk} \Psi_{\ell} = e^{ \frac{2i \pi}{Nk} \ell} \Psi_\ell , \qquad T^*_{f/ Nk} \Psi_\ell = \Psi_{\ell +1}
\end{gather}
So the eigenvalues of $T^*_{e / Nk}: \Hilb_k \rightarrow \Hilb_k$ are the numbers $ e^{ \frac{2i \pi}{Nk} \ell}$, $\ell \in \Z / Nk \Z$, they are all simple. We can construct such a basis by choosing first a normalized $\Psi_0 \in \ker ( T^*_{e/ Nk} - \id)$ and defining the other vectors $\Psi_\ell$ with the second equation of (\ref{eq:rep_heis}). The only indeterminacy in the choice of this basis is the phase of $\Psi_0$.  In this paper we will often use the following normalization of the phase: 
\begin{gather} \label{eq:norm} 
\Psi_0(0) \in  \Om_{e} \R_{>0}.
\end{gather}
where $\Om_e \in \delta$ is such that $\varphi(\Om_e^2) ( e ) = 1$. 
We have that $$\Psi_\ell ( -x) = \Psi_{-\ell} (x).$$  
This property does not depend on the normalization.

\subsection{Asymptotical properties}

\subsubsection{Basis vectors as Lagrangian states} \label{sec:basis-lagrangian}

Consider a fixed basis $(e,f)$ of the lattice $R$. Choose $\Om_e$ satisfying $\varphi ( \Om_e^2 ) (e) =1$. Then for any $k$, there exists a unique orthonormal basis $(\Psi_{\ell ,k })$ of $\Hilb_k^R$ satisfying (\ref{eq:rep_heis}) and (\ref{eq:norm}). In this section we collect some asymptotical properties of the states $\Psi_{\ell,k}$ as $k$ tends to infinity.  

First the sequence $(\Psi_{0,k})$ concentrates on $\R e + \Z f $ in the following sense: for any $\delta \in (0,1)$, there exists a positive $C$ such that 
\begin{gather} \label{eq:estim_1_psi_0}
\bigl| \Psi_{0,k} ( x)  \bigl| \leqslant C e^{-k/C} ,
\end{gather}
 for all  $x \in \frac{1}{2}f + \frac{1}{2}[-\delta, \delta ]f + \R e$ modulo $R$ and every $k$. 

Second $\Psi_{0,k}$ has the following form on a neighborhood of $\R e$. Let $t_e$ be the unique holomorphic section of $L$ which restricts to the constant section equal to 1 on $\R e$. Then for any $\delta \in (0,1)$, there exists a positive $C$ such that 
 \begin{gather} \label{eq:estim_2_psi_0}
 \bigl| \Psi_{0,k} ( x) - \Bigl( \frac{k}{2\pi} \Bigr)^{1/4} t_e^k (x) \otimes \Om_{e}  \bigl| \leqslant C e^{-k/C} 
\end{gather}
for all  $x \in  [- \delta , \delta] f  + \R e$ and every $k$. In particular
\begin{gather}  \label{eq:psi_0_asymptot}
 \Psi_{0,k}( 0 ) = \Bigl( \frac{k}{2 \pi } \Bigr)^{ 1/4} \Om_e + O( k^{- \infty} ) 
\end{gather}
Equations (\ref{eq:estim_1_psi_0}) and (\ref{eq:estim_2_psi_0}) are the content of Proposition 3.2 of \cite{LJ1}.

Introduce the eigenvector 
$$ \Phi_k = \frac{1}{\sqrt{k N}} \sum _{ m \in \Z / k N \Z} \Psi_{m,k}$$ 
of $T^*_{f/ Nk}$ with eigenvalue 1.  One proves by applying Poisson summation formula (cf. proof of Theorem 2.3 in \cite{LJ1}) that 
\begin{gather} \label{eq:phi_asymptot}
 \Phi_k ( 0 ) =   e^{i \frac{\pi}{4} }  \Bigl( \frac{k}{2 \pi } \Bigr)^{ 1/4}   \Om_f + O( k^{- \infty} )
\end{gather}
where $\Om_f \in \delta$ is such that $\varphi(\Om_f ^2) ( f) = 1$. Then considering the basis $(f, -e)$ of $R$ instead of $(e,f)$ , one deduces from Equation (\ref{eq:estim_1_psi_0}) and (\ref{eq:estim_2_psi_0}) the asymptotic behavior of $(\Phi_{k})$. For any $\delta \in (0,1)$, there exists a positive $C$ such that
$$\bigl| \Phi_k ( x)  \bigl| \leqslant C e^{-k/C} ,$$
 for all  $x \in \frac{1}{2}e + \frac{1}{2}[-\delta, \delta ]e + \R f$ modulo $R$ and every $k$. Furthermore 
$$ \bigl| \Phi_k ( x) - e ^{i \frac{\pi}{4}} \Bigl( \frac{k}{2\pi} \Bigr)^{1/4} t_f^k (x) \otimes \Om_{f}  \bigl| \leqslant C e^{-k/C} $$ 
for all  $x \in  [- \delta , \delta] e  + \R f$ and every $k$. Here   $t_f$ is the unique holomorphic section of $L$ restricting to the constant section equal to 1 on $\R f$.

\subsubsection{Microsupport} 

 Consider a family $( \xi_k \in \Hilb_k^R; \; k \in \Z_{>0})$. We say $(\xi_k)$ is {\em admissible} if there exists a positive $C$ and an integer $M$ such that 
$$\bigl\|  \xi_k  \bigr\|_{\Hilb_k} \leqslant C k^{M} .$$ 
The {\em microsupport} of $(\xi_k)$ is the subset $\MS(\xi_k)$ of $E$ defined as follows: $x \notin \MS( \xi_k)$ if and only if there exists a  neighborhood $U$ of $x$ and a sequence $(C_M)$ of positive numbers such that for any $M$ and any $k$, 
$$  | \xi_k (y) | \leqslant C_M k^{-M}, \qquad \forall y \in U $$ 
$\MS (\xi_k)$  is a closed set of $E$, $R$-invariant. 

\begin{rem} \label{sec:micro_support_base}
By (\ref{eq:estim_1_psi_0}) and (\ref{eq:estim_2_psi_0}), the microsupport of $(\Psi_{0,k})$ is $\R e + \Z f$. More generally consider a family $(\xi_k\in \Hilb_k)$ of normalized vectors such that $$T_{e/Nk}^* \xi_k = e ^{ 2 i \pi \la_k} \xi_k$$ with $\la_k$ a sequence converging to $\la$. Then using that $ \xi_k = T^*_{\la_kf} \Psi_{0,k}$ up to a phase, we deduce from (\ref{eq:estim_1_psi_0}) and (\ref{eq:estim_2_psi_0}) that the microsupport of $(\xi_k)$ is $-\la f +\Z f +  \R e$. 
\qed \end{rem}

In the sequel we will deduce the microsupport of some state $(\xi_k)$  from the asymptotic behavior of the coefficients of $\xi_k$ in the basis $(\Psi_{\ell, k })$, by using the following proposition. 

\begin{prop} \label{prop:estim}
Let $\xi = ( \xi_k \in \Hilb_k^R; \; k \in \Z_{>0})$ be an admissible family. Let $I \subset \R$ be an open interval such that for any integer $\ell$
$$ - \ell/kN \in I   \quad \Rightarrow \quad  \langle \xi_k, \Psi_{k,\ell}\rangle = 0 $$
Then the microsupport of $\xi$ does not intersect the set $ I f + \R e$.
\end{prop}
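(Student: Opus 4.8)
The plan is to reduce the statement to a statement about the single vector $\Psi_{0,k}$ (and its translates) using the spectral decomposition of $T^*_{e/Nk}$, and then to use the exponential localization estimates \eqref{eq:estim_1_psi_0} and \eqref{eq:estim_2_psi_0} combined with a partition of the circle of eigenvalue-parameters. Concretely, write $\xi_k = \sum_{\ell} c_{k,\ell} \Psi_{\ell,k}$ with $c_{k,\ell} = \langle \xi_k, \Psi_{\ell,k}\rangle$, and recall from Remark \ref{sec:micro_support_base} that, up to a phase, $\Psi_{\ell,k} = T^*_{(\ell/kN) f}\,\Psi_{0,k}$, so that $\Psi_{\ell,k}$ is an admissible normalized family whose microsupport is $-(\ell/kN) f + \Z f + \R e$. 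The hypothesis says $c_{k,\ell} = 0$ whenever $-\ell/kN \in I$; I want to conclude that all the surviving terms contribute nothing over the open set $I f + \R e$.

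First I would fix a point $x_0 \in I f + \R e$ and a small closed interval $J \subset I$ with $x_0 \in \operatorname{int}(J) f + \R e$, chosen so that $\operatorname{dist}$ between $J f + \Z f + \R e$ and the complement of a fixed neighborhood $U$ of $x_0$ in $E/R$ is positive. Because the parameter $-\ell/kN$ lives in $\R/\Z$, I split the sum over $\ell \in \Z/kN\Z$ into two ranges: those $\ell$ with $-\ell/kN$ lying (mod $\Z$) in $J$ — these have $c_{k,\ell}=0$ by hypothesis, since $J \subset I$ — and those with $-\ell/kN$ at distance $\geqslant \eta$ from $J$ in $\R/\Z$, for some fixed $\eta>0$ with $I \supset J + (-\eta,\eta)$. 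For the latter, each $\Psi_{\ell,k}$ is, near $x_0$, exponentially small: translating \eqref{eq:estim_1_psi_0}–\eqref{eq:estim_2_psi_0} by $T^*_{(\ell/kN)f}$ shows $|\Psi_{\ell,k}(y)| \leqslant C e^{-k/C}$ for $y$ in a neighborhood of $x_0$, with $C$ uniform over all such $\ell$ (the neighborhood and constant depend only on $\eta$, not on $\ell$, because the estimates \eqref{eq:estim_1_psi_0}–\eqref{eq:estim_2_psi_0} are uniform in $k$ and translation-covariant). Summing over the at most $kN$ indices $\ell$, and using admissibility $|c_{k,\ell}| \leqslant \|\xi_k\| \leqslant C k^M$, gives $|\xi_k(y)| \leqslant (kN)(Ck^M)(Ce^{-k/C})$ for $y \in U$, which is $O(k^{-\infty})$, and in particular bounded by $C_M k^{-M}$ for every $M$ on $U$. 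Hence $x_0 \notin \MS(\xi_k)$, and since $x_0 \in I f + \R e$ was arbitrary, $\MS(\xi_k) \cap (I f + \R e) = \emptyset$.

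The main technical point to get right is the uniformity of the exponential estimate over the relevant family of indices $\ell$: one needs that \eqref{eq:estim_1_psi_0} and \eqref{eq:estim_2_psi_0}, read for the basis $(f,-e)$ of $R$ (so that the roles of the two periodicity directions are exchanged and $\Psi_{0,k}$ is replaced by the eigenvector of $T^*_{f/Nk}$, cf. the computation of the asymptotics of $\Phi_k$ at the end of Section \ref{sec:basis-lagrangian}), translate into a bound on $\Psi_{\ell,k}$ near $x_0$ that is controlled purely by the distance of $-\ell/kN$ to $J$ modulo $\Z$; this is exactly what \eqref{eq:estim_1_psi_0} provides, with $\delta$ chosen in terms of $\eta$. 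I expect that once this uniformity is pinned down the rest is bookkeeping: the hypothesis kills the ``on-diagonal'' indices, the localization estimates kill the ``off-diagonal'' ones, and admissibility controls the number of terms and the size of the coefficients.
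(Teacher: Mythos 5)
Your proposal is correct and follows essentially the same route as the paper's proof: expand $\xi_k$ in the basis $(\Psi_{\ell,k})$, use the hypothesis to kill the coefficients whose parameter $-\ell/kN$ falls in $I$ (mod the lattice periodicity), and bound the remaining at most $Nk$ terms near the point by the translation-covariant exponential estimate \eqref{eq:estim_1_psi_0} (with $\delta$ chosen so that $\tfrac12(1-\delta)$ is smaller than your margin $\eta$), combined with the polynomial bound on the coefficients from admissibility. Two small cosmetic points: the indices at distance less than $\eta$ from $J$ but outside $J$ are not in either of your two ranges, yet they are also killed by the hypothesis since they lie in $I$, so the split is harmless; and the estimate you need is \eqref{eq:estim_1_psi_0} for the original basis $(e,f)$ itself --- no appeal to the $(f,-e)$ reading used for $\Phi_k$ is necessary.
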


\begin{proof} 
Write $\xi_k = \sum \al_{k,\ell} \Psi_{k, \ell}$. $(\xi_k)$ being admissible one has
\begin{gather} \label{eq:admissib}
| \al_k | \leqslant C' k^{M} 
\end{gather}
for some $M$ and positive $C'$. 
Denote by $(p, q)$ the linear coordinates of $E$ dual to $(e,f)$. Let $x \in I f + \R e$, so $q(x) \in I$. Choose $\delta \in (0,1)$ such that $q (x) + [ -1 + \delta) , 1 - \delta ] \subset I$. Let $C$ be a positive constant such that Equation (\ref{eq:estim_1_psi_0}) is satisfied. Since $\Psi_{\ell, k} = T^*_{\ell f/ kN } \Psi_{0,k}$, it follows from (\ref{eq:estim_1_psi_0}) that we have for any integer $n$ 
\begin{gather}  \label{eq:implication}
 \Bigl| q(y) +\frac{\ell}{kN} + \frac{1}{2} +n \Bigr|  \leqslant \frac{\delta}{2} \quad   \Rightarrow \quad \bigl| \Psi _{\ell, k } (y)   \bigr| \leqslant C e^{-k/C} 
\end{gather}
Let $y$ be such that  $| q(y) -   q(x) | <  \frac{1}{2}(1- \delta)$. If $\ell/Nk \in I$ modulo $\Z$, then $\al_{\ell, k} = 0$. Otherwise, one has for any $n$
$$\bigl| q (y)  + \frac{\ell}{Nk} + n \bigr| > \frac{1}{2} ( 1 -\delta)$$ 
which implies by (\ref{eq:implication}) that $\bigl| \Psi _{\ell, k } (y)   \bigr| \leqslant C e^{-k/C}$. 
 Using this with (\ref{eq:admissib}) and the fact that the dimension  of $\Hilb_{k}^R$ is $Nk$,  we obtain 
$$ | \Phi_k(y) | \leqslant (Nk)  C' k^{M} C e ^{-k/C} .
 $$ 
This concludes the proof. 
\end{proof}

\section{Knot state} \label{sec:knot-state}

\subsection{Definitions} \label{sec:def_knot_state}

Consider as previously a real two dimensional symplectic vector space $(E, \om)$ with a compatible linear complex structure $j$ and a half-form bundle $(\delta, \varphi)$. Let $(\lambda, \mu)$ be a basis of $E$ such that $\om ( \lambda,\mu) = 4\pi$. We denote by  $\Hilb_k$ the space of holomorphic sections of $L^k
\otimes \delta$ invariant with respect to the lattice $\la \Z \oplus \mu \Z$. Introduce the following endomorphisms of $\Hilb_k$
\begin{gather} \label{eq:def_M_L_q}
  M = T^*_{\mu/2k}, \quad L = T^*_{- \lambda / 2k} \quad \text{ and } \quad q = e^{i \frac{\pi}{k}}. 
\end{gather}
Let $\Om_\mu \in \delta$ satisfying $\varphi(\Om_\mu^2) ( \mu) = 1$.
As recalled in section \ref{sec:finite-heis-group}, $\Hilb_k$ has a unique orthonormal basis $( \xi_\ell, \; \ell \in \Z/ 2 k \Z )$ such that
\begin{gather}  \label{eq:rep_M_L}
 M \xi_\ell = q^{\ell } \xi_\ell , \qquad L \xi _\ell = \xi_{\ell - 1 }
\end{gather}
and $\xi_0(0) \in \R^+ \Om_{\mu}$. 

Let $a, b$ be relatively prime positive integers. For any integer $\ell$, we denote by $J_\ell$ the $\ell$-th Jones polynomial of the torus knot with parameter $(a, b)$. Here we normalize the Jones polynomials in such a way that the $\ell$-th  Jones polynomial of the trivial knot is $( t^{2 \ell } - t^{- 2 \ell})/(t^2 - t ^{-2})$. 
For any positive $\ell$, it is proved in \cite{morton} that  
\begin{gather} \label{eq:Pol_jones}
 J_\ell ( t) = \frac{ t ^{ab ( 1 - \ell^2)}}{ t^2 - t^{-2}} \sum _{r = - \frac{\ell -1}{2} }^{\frac{\ell -1}{2}} t^{4ab r^2} ( t^{ - 4 ( a + b ) r +2 } - t ^{-4 ( a - b ) r - 2 })
\end{gather}
We set $J_0 = 0$ and $J_{-\ell} = - J_{\ell}$ for any positive $\ell$. It is a property satisfied by any knot that $J_{\ell}(-e^{i \pi/2k}) = J_{\ell + 2k } ( - e^{i \pi/2k})$. 
We let $Z_k$ be the vector of $\Hilb_k$ 
\begin{gather}   \label{eq:def_Z_k}
Z_k = \frac{\sin(\pi/k )}{\sqrt{k}} \sum_{\ell \in \Z / 2k \Z} J_\ell (-e^{i\pi/2k}) \xi_\ell 
\end{gather}
and call it the knot state.

\subsection{Topological quantum field theory}

Let us present the knot state in the realm of topological quantum field theory. We work with the group $\su$ at a given level $k$ and we do not take into account the anomaly correction to simplify the exposition. To any closed oriented surface $\Si$ is associated a Hermitian vector space $V_k (\Si)$. To any oriented compact three dimensional manifold $N$ with a colored banded link $(L,c)$ is associated a vector $Z_k ( N, L,c) \in V_k ( \partial N)$. Here the color $c$ is a map from $\pi_0 (L)$ to $\{ 1, \ldots , k-1 \}$

Consider a knot $K$ in the three dimensional sphere with peripheral torus $\Si$. Let $N_K$ be a tubular neighborhood of the knot with boundary $\Si$. Thicken the knot $K$ to get an annulus $L$ whose boundary components are unlinked. Then the family $(Z_k ( N_K, L , \ell), \; \ell =1 , \ldots , k-1 )$ is a basis of $V_k (\Si)$. It follows from the definitions of TQFT that the vector $Z_k(E_K) \in V_k ( \Si)$ associated to the knot exterior is given in this basis by the formula
\begin{gather}\label{eq:def_knot_State_top} 
 Z_k ( E_K ) =  
 \sqrt{ \frac{2}{k}} \sin(\pi/k ) \sum_{\ell =1, \ldots, k-1} J_\ell (-e^{i\pi/2k}) Z_k ( N_K, L , \ell)  
\end{gather} 
We refer the reader to Section 2.1 and Section 4.2 of \cite{LJ1} for more details. 

Consider the vector space $E = H ^1 ( \Si, \R)$ with its lattice $R = H^1 ( \Si, \Z)$. Endow $E$ with the symplectic form given by $4 \pi$ times the intersection product. Let $\la$ and $\mu$ in $R$ be the (classes of a) longitude and meridian of the knot. Introduce a compatible linear complex structure with a half-form line and define the quantum space ${\Hilb_k}$ as in Section \ref{sec:def_knot_state}. Then choosing a vector $\Om_{\mu} \in \delta$ such that $\varphi ( \Om_{\mu}^2 ) (\mu) = 1$, we get a basis $(\xi_\ell)$ of $\Hilb_k$ satisfying Equations (\ref{eq:rep_M_L}). Let $\Hilb_k^{\alt}$ be the subspace of $\Hilb_k$ consisting of alternate sections, that is the sections $\Psi$ satisfying
$$ \Psi( -x) =  - \Psi (x) .$$
Since $\xi_{\ell } (-x) = \xi _{- \ell} (x)$,  the family $( \xi_\ell - \xi_{-\ell})_{\ell =1, \ldots , k-1}$ is a basis of $\Hilb_k^{\alt}$. Let $\Phi$ be the isomorphism from $V_k (\Si)$ to $\Hilb_k^{\alt}$  sending $Z_k ( N_K, L , \ell)$ into $\frac{1}{\sqrt 2} ( \xi_\ell - \xi_{-\ell})$. Since $J_{\ell} = - J_{\ell}$, the image of $Z_k (E_K)$ by $\Phi$ is $Z_k$, compare Equations (\ref{eq:def_knot_State_top}) and (\ref{eq:def_Z_k}). As a last remark, $\Phi$ is uniquely defined up to a power of $i$. This indeterminacy comes here from the choice of the orientation of the longitude and the meridian and of $\Om_\mu$. For additional properties of these morphisms, we sent the reader to Section 2.3 of \cite{LJ1}. 

The lattice $R= \la \Z \oplus \mu \Z $ and the group $\Z_2$ act on $E$ by translation and multiplication by $\pm 1$ respectively. This defines an action of the semi-direct product $R \rtimes \Z_2$ that can be lift to the prequantum bundle $L$ and the half-form bundle $\delta$ as follows. For any $(x,\ep) \in R  \times \Z_2$, we let
\begin{gather*} 
(x, \ep ) . (y, v) = (x + \ep y , \ep v) , \qquad (y, v) \in E \times \delta \\ 
(x, \ep ) . (z, w) = ( x + z , e ^{\frac{i}{2} \om ( x, z)} w)  , \qquad (z, w) \in L 
\end{gather*}
The quotient of $\delta$ and $L$ by these actions are line (orbi)-bundles, that we denote respectively by $\delta$ and $L_{\CS}$. The space $\Hilb_k^{\alt}$ is naturally isomorphic to the space of holomorphic sections of $L_{\CS}^k \otimes \delta$. As we will see in Section \ref{sec:peripheral-torus}, the quotient of $E$ by $R \rtimes \Z_2$ is naturally isomorphic to the moduli space $\mo (E_K)$, so that the knot state may be considered as a section over $\mo ( \Si)$.

\subsection{Recurrence relation}
 
It was observed by Hikami in \cite{hikami} that the $J_\ell$ satisfy some $q$-difference relations. These relations lead to the following characterizations of the knot state defined in Equation (\ref{eq:def_Z_k}). 

\begin{theo} \label{sec:knot_state_charac}
The state $Z_k$ of the torus knot with parameter $(a,b)$ satisfies the inhomogeneous equation 
\begin{gather}  \label{eq:equation_non_homogene}
  Z_k = ( q^{-1} M)^{-2 ab} L^{-2} Z_k + \sum_{i =1,\ldots, 4} \ep_i q^{\ep_i}  ( q^{-1} M )^{- ab + p_i } Z^0_k
\end{gather}
where $Z_0^k $ is a vector of $\Hilb_k$ such that 
\begin{gather} \label{eq:car_Z^0}
 L Z^0_k = Z^0_k , \qquad Z^0_k ( 0 ) = \frac{e^{ i \frac{3\pi}{4} }}{\sqrt 2} \Bigl( \frac{k}{2 \pi} \Bigr)^{1/4} \Om _\lambda + O(k^{-\infty}),
\end{gather}
$\Om_\la \in \delta$ is such that $\varphi( \Om_\la ^2) ( \la) =1$ and the $( \ep_i , p_i )$, $i=1,2,3,4$ are respectively given by $(1 , -a-b)$, $( -1, -a + b)$, $(1, a+ b)$ and $(-1, a-b)$. 
\end{theo}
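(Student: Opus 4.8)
## Proof proposal

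The plan is to start from Morton's explicit formula \eqref{eq:Pol_jones} for $J_\ell(t)$ and turn the $q$-difference relations of Hikami into an identity among the coefficients $J_\ell(-e^{i\pi/2k})$, then translate that identity into the operator equation \eqref{eq:equation_non_homogene} via the dictionary $M\xi_\ell = q^\ell\xi_\ell$, $L\xi_\ell=\xi_{\ell-1}$. Concretely, I would first record how $M$ and $L$ act on the knot state: from \eqref{eq:def_Z_k} one has $(q^{-1}M)^{m}L^{n}Z_k = \frac{\sin(\pi/k)}{\sqrt k}\sum_\ell q^{(\ell-1)m}J_{\ell+n}(-e^{i\pi/2k})\,\xi_\ell$ (after re-indexing), so that the asserted equation \eqref{eq:equation_non_homogene} is \emph{equivalent}, coefficient by coefficient in the basis $(\xi_\ell)$, to a scalar recurrence of the shape
\begin{gather*}
 J_\ell(-e^{i\pi/2k}) = q^{-2ab(\ell-1)-?}\,J_{\ell-2}(-e^{i\pi/2k}) + \text{(four boundary terms)}\cdot c^0_\ell,
\end{gather*}
where $c^0_\ell$ are the coefficients of $Z^0_k$. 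So the first real task is to \textbf{derive this scalar recurrence directly from \eqref{eq:Pol_jones}}. Writing $J_\ell(t)(t^2-t^{-2}) = t^{ab(1-\ell^2)}\sum_{|r|\le(\ell-1)/2} t^{4abr^2}(t^{-4(a+b)r+2}-t^{-4(a-b)r-2})$, the sum over $r\in\{-\frac{\ell-1}{2},\dots,\frac{\ell-1}{2}\}$ for index $\ell$ differs from the corresponding sum for index $\ell-2$ by the two extremal terms $r=\pm\frac{\ell-1}{2}$; the shift $\ell\mapsto\ell-2$ also changes the prefactor $t^{ab(1-\ell^2)}$ by an explicit power of $t$ depending linearly on $\ell$. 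Matching these two effects — the bulk telescoping and the prefactor shift — is exactly what produces the $(q^{-1}M)^{-2ab}L^{-2}$ term together with the four correction terms indexed by the sign $\pm$ in the binomial $(t^{-4(a+b)r+2}-t^{-4(a-b)r-2})$ and by the two extremal values $r=\pm\frac{\ell-1}{2}$, i.e.\ the four pairs $(\ep_i,p_i)=(1,-a-b),(-1,-a+b),(1,a+b),(-1,a-b)$. Substituting $t=-e^{i\pi/2k}$, so $t^2=q=e^{i\pi/k}$ and $t^{4ab r^2}$ becomes a power of $q$ evaluated at $r=\pm(\ell-1)/2$, should collapse each extremal term into a multiple of a single fixed profile in $\ell$ — and this fixed profile, suitably normalized, will be the vector $Z^0_k$.

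Next I would \textbf{identify $Z^0_k$ and verify \eqref{eq:car_Z^0}}. After the telescoping, the boundary contribution at $r=\pm(\ell-1)/2$ produces, up to the $q$-power prefactor absorbed into $(q^{-1}M)^{-ab+p_i}$ and the sign/phase $\ep_i q^{\ep_i}$, a vector whose $\xi_\ell$-coefficient is proportional to $\frac{\sin(\pi/k)}{\sqrt k}\cdot\frac{q^{\text{(something quadratic in }\ell)}}{q-q^{-1}}$; the quadratic-in-$\ell$ part from $t^{4ab(\ell-1)^2/4}=q^{ab(\ell-1)^2/2}$ must be checked to cancel against the $t^{ab(1-\ell^2)}$ prefactor (this is why $ab/2$ appears in $\mu_k$ in Theorem~\ref{theo:intro}), leaving a vector $Z^0_k$ with $\xi_\ell$-coefficient essentially independent of $\ell$ — which is precisely the eigenvector condition $L Z^0_k = Z^0_k$. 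The normalization at $x=0$ in \eqref{eq:car_Z^0} then follows by recognizing $Z^0_k$ as a multiple of the eigenvector $\Phi_k$ of section~\ref{sec:basis-lagrangian} (with the roles of $e,f$ played by $\mu,\la$, up to orientation), evaluating via \eqref{eq:phi_asymptot}, and tracking the scalar factor $\frac{\sin(\pi/k)}{\sqrt k}\cdot\frac{1}{q-q^{-1}} = \frac{\sin(\pi/k)}{\sqrt k}\cdot\frac{1}{2i\sin(\pi/k)} = \frac{1}{2i\sqrt k}$, which produces the $\frac{1}{\sqrt2}$ and the phase $e^{i3\pi/4}$ once combined with the $e^{i\pi/4}$ from \eqref{eq:phi_asymptot} and the half-form/orientation bookkeeping; the error is $O(k^{-\infty})$ because \eqref{eq:phi_asymptot} is, and because the periodicity $J_\ell(-e^{i\pi/2k})=J_{\ell+2k}(-e^{i\pi/2k})$ guarantees everything is well-defined on $\Z/2k\Z$.

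The \textbf{main obstacle} I expect is the bookkeeping of powers of $q$ and of the half-integer index $\frac{\ell-1}{2}$: Morton's sum runs over $r\in\frac12\Z$ when $\ell$ is even, the prefactors $t^{ab(1-\ell^2)}$ and $t^{4abr^2}$ contribute competing quadratic terms in $\ell$ that must cancel up to the linear shift implemented by $(q^{-1}M)^{\pm ab+p_i}$, and getting the four signs $\ep_i$ and the four exponents $p_i$ exactly right (not off by an overall power of $q$ or a sign) requires care with the conventions $t=-e^{i\pi/2k}$, $q=e^{i\pi/k}$, $J_0=0$, $J_{-\ell}=-J_\ell$, and the $2k$-periodicity. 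A clean way to organize this is to first prove the scalar recurrence as a polynomial identity in $t$ valid for all $\ell\ge1$ (where the periodicity is not yet needed), then specialize $t=-e^{i\pi/2k}$ and extend both sides $2k$-periodically; the operator form \eqref{eq:equation_non_homogene} is then just a transcription, and the analytic statement \eqref{eq:car_Z^0} is the only place where the asymptotic input from section~\ref{sec:geom-quant-tori} enters.
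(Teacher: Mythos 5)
Your proposal follows essentially the same route as the paper: the paper also derives the scalar recurrence for $J_\ell(t)$ directly from Morton's formula (the two extremal terms $r=\pm\frac{\ell-1}{2}$ giving the four $(\ep_i,p_i)$ corrections), transcribes it into the operator identity via $M\xi_\ell=q^\ell\xi_\ell$, $L\xi_\ell=\xi_{\ell-1}$, and identifies $Z^0_k=\frac{-i}{2\sqrt k}\sum_\ell\xi_\ell$, whose value at $0$ is read off from \eqref{eq:phi_asymptot}. Your account of the telescoping, the cancellation of the quadratic-in-$\ell$ powers, and the phase bookkeeping for \eqref{eq:car_Z^0} matches the paper's argument, so the proposal is correct.
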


 An important point is that this characterization is independent of the basis $(\xi_\ell)$. In the sequel we will actually work with an other basis. 

\begin{proof} 
With a straightforward computation one checks that the Jones polynomials satisfy the following recurrence relation:
\begin{xalignat}{1}  \notag
 J_\ell (t) &  = t^{ 4 ab ( 1 - \ell )} J_{\ell -2 } ( t) + \frac{ t^{ 2 ab ( 1 - \ell)}}{ t^2 - t ^{-2}} \bigl( t^{-2 ( a + b ) ( \ell -1 ) + 2} -  t^{ -2 ( a - b) ( \ell -1 ) -2 }  \\ &
+ t^{ 2 ( a + b ) ( \ell -1 ) + 2 } - t^{ 2 ( a -b ) ( \ell -1) -2 } \bigr) \notag 
\end{xalignat}
Consequently  $Z_k$ satisfies the inhomogeneous equation (\ref{eq:equation_non_homogene}) where $Z^0_k$ is the state of $\Hilb_k$ given by 
\begin{xalignat*}{2} \notag 
  Z^0_k = & \frac{\sin \bigl( \pi/k \bigr) }{\sqrt{k}}.\frac{1}{q - q^{-1} } \sum _{\ell \in \Z / 2k
  \Z} \xi_\ell \\ 
=  & \frac{ -i }{2 \sqrt{ k }} \sum_{\ell \in \Z / 2k
  \Z} \xi_\ell 
\end{xalignat*}
We deduce the estimation of $Z_k^0(0)$ from Equation (\ref{eq:phi_asymptot}).
\end{proof}

\section{Knot state analysis} \label{sec:knot-state-analysis}

\subsection{Change of lattice} 

Let $D= 2 ab$. To handle equation (\ref{eq:equation_non_homogene}), we introduce the lattice $$R_D = D \mu \Z \oplus \la \Z$$ and the vector space $\Hilb_{D,k}$ consisting of holomorphic sections of $L^k \otimes \delta$ such that $ T_{x}^* \Psi = \Psi$ for any $x \in R_D$. Since $R_D$ is contained in $ \mu \Z \oplus \la \Z$, the space $\Hilb_k$ we considered in Section \ref{sec:def_knot_state} is a subspace of $\Hilb_{D, k }$, actually
\begin{gather} \label{eq:carH}
 \Hilb_{k} = \Hilb_{D,k} \cap \ker (T_\mu^* - \id)
\end{gather} 
By section \ref{sec:finite-heis-group}, $\Hilb_{D,k}$  is $2kD$-dimensional. Introduce the following endomorphisms of $\Hilb_{D,k}$ 
\begin{gather}  \label{eq:def_R_S_q^1/D} 
S = T^*_{- \frac{1}{2kD} \la} , \quad R = T^*_{ \frac{ 1}{2kD} ( D \mu - 2 \lambda) } \quad \text{ and } \quad q ^{ \frac{1}{D}} = e^{ i \frac{ \pi }{kD}}  
\end{gather}
Since $(-\la, D \mu - 2 \la)$ is an oriented basis of $R_D$, $\Hilb_{D,k}$ admits a basis  $(\Psi_\ell ; \; \ell \in \Z / 2k D \Z)$ such that
\begin{gather} \label{eq:base_H_DDDD}
 S \Psi_{\ell} = q^{\frac{\ell}{D}  } \Psi_\ell, \qquad R \Psi_{\ell} = \Psi_{\ell +1}
\end{gather}
and 
\begin{gather} \label{eq:norm_phase}
\Psi_0( 0 ) = e^{i \frac{\pi}{4}} \Bigl( \frac{k}{2 \pi } \Bigr)^{ 1/4} \Om_\la + O( k^{- \infty} ),
\end{gather}
$\Om_\la$ being the vector given in Theorem \ref{sec:knot_state_charac}.

\begin{theo}  \label{theo:change-lattice}
For any positive integer $k$, the state $Z_k$ of the torus knot with parameter $(a,b)$ satisfies
$$  (\id - R^{-D} ) Z_k =  \al (k) Y_k   \mod E_k^\perp $$ 
where $E_k$ is the subspace of $\Hilb_k$ generated by the family $( \Psi_n, - 2 k - ab + a + b \leqslant n < 2k -ab -a -b )$, $Y_k$ is the vector of $\Hilb_{D,k}$ given by 
$$Y_k = R^{-ab} (R^a - R^{-a} ) ( R^b - R^{-b} ) \Psi_0 = \sum_i \ep_i \Psi_{-ab + p_i}$$
with $(\ep_i, p_i)$ defined as in Theorem \ref{sec:knot_state_charac}
and $(\al (k))$ a sequence of complex numbers satisfying $$ \al (k) = \frac{i }{\sqrt 2} q^{-\frac{a^2 + b^2}{D} + \frac{1}{4}D} + O(k^{-\infty}).$$ 
\end{theo}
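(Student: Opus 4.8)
The plan is to derive Theorem \ref{theo:change-lattice} directly from the inhomogeneous equation (\ref{eq:equation_non_homogene}) of Theorem \ref{sec:knot_state_charac} by translating every operator appearing there into the operators $R, S$ on the larger space $\Hilb_{D,k}$. First I would record the elementary dictionary between the two sets of Heisenberg operators. Since $M = T^*_{\mu/2k}$, $L = T^*_{-\la/2k}$ on $\Hilb_k$ and $S = T^*_{-\la/2kD}$, $R = T^*_{(D\mu - 2\la)/2kD}$ on $\Hilb_{D,k}$, the composition rule $T^*_x T^*_y = e^{\frac{ik}{2}\om(x,y)} T^*_{x+y}$ (a direct consequence of (\ref{eq:loi_Heisenberg}) and (\ref{eq:pull_back})) gives, up to explicit powers of $q^{1/D}$, the identities $L = S^{D}$ and $M = R S^{2}$ (or a similarly normalized pair — the exact scalar prefactors are the routine part). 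In particular $(q^{-1}M)^{-2ab} L^{-2} = R^{-D}$ up to a scalar which one computes to be $1$ with the chosen normalizations, and $(q^{-1}M)^{-ab+p_i}$ becomes $R^{-ab+p_i}$ times a scalar; the phase $q^{-\frac{a^2+b^2}{D} + \frac14 D}$ in $\al(k)$ is exactly the accumulated scalar from rewriting the four inhomogeneous terms, so I would carry out this bookkeeping carefully since it is where the stated value of $\al(k)$ comes from.

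Next I would handle the source term $Z^0_k$. By (\ref{eq:car_Z^0}), $Z^0_k$ is $L$-invariant, hence $S^D$-invariant, and has the prescribed value at $0$; comparing with (\ref{eq:norm_phase}), which fixes $\Psi_0(0)$, and using that $\Psi_0$ is (up to $S^D$-invariance, via the eigenvector construction analogous to $\Phi_k$ in Section \ref{sec:basis-lagrangian}) the natural $S^D$-invariant vector with controlled value at $0$, I would identify $Z^0_k$ with a scalar multiple of $\Psi_0$ modulo $O(k^{-\infty})$ — the scalar being $\frac{-i}{\sqrt2}$ up to the $(k/2\pi)^{1/4}$ and phase normalization, matching the $\frac{i}{\sqrt2}$ appearing in $\al(k)$ together with the sign from the $\ep_i$. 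Substituting into (\ref{eq:equation_non_homogene}) then yields $(\id - R^{-D}) Z_k = \al(k)\, R^{-ab}(R^a - R^{-a})(R^b - R^{-b})\Psi_0$, and expanding the product $R^{-ab}(R^a - R^{-a})(R^b - R^{-b})\Psi_0 = \sum_i \ep_i \Psi_{-ab+p_i}$ using (\ref{eq:base_H_DDDD}) is immediate from $(\ep_i, p_i) = (1,-a-b), (-1,-a+b), (1,a+b), (-1,a-b)$.

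The one genuinely nontrivial point is the qualifier ``$\mod E_k^\perp$'', i.e.\ that the identity only holds after projecting out the orthogonal complement of $E_k = \operatorname{span}(\Psi_n : -2k-ab+a+b \le n < 2k-ab-a-b)$. The reason is that (\ref{eq:equation_non_homogene}) was an identity in $\Hilb_k$, obtained from the $2k$-periodicity $J_\ell(-e^{i\pi/2k}) = J_{\ell+2k}(-e^{i\pi/2k})$ and the recurrence for $J_\ell$, whereas the rewritten version lives in $\Hilb_{D,k}$ where that periodicity is lost: the operator $R^{-D}$ and the terms $\Psi_{-ab+p_i}$ mix the $2k$-periodic structure. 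So I would check that $Z_k \in \Hilb_k \subset \Hilb_{D,k}$ via (\ref{eq:carH}), that the recurrence relation for the $J_\ell$ translates verbatim into the coefficient recurrence for $Z_k$ expressed in the $(\Psi_\ell)$ basis only for indices in the range defining $E_k$ (the window of width $4k$ centered appropriately, reflecting that $\Hilb_k$ is $2k$-dimensional inside the $2kD$-dimensional $\Hilb_{D,k}$, together with the shift $-ab$ coming from the $t^{ab(1-\ell^2)}$ prefactor in (\ref{eq:Pol_jones})), and that outside this window the discrepancy between the two sides lies in $E_k^\perp$. Concretely, I expect to compute $\langle (\id - R^{-D})Z_k - \al(k) Y_k, \Psi_n \rangle$ and show it vanishes for $n$ in the stated range, using the explicit form (\ref{eq:Pol_jones}) of $J_\ell$ and the relation between the $(\xi_\ell)$ basis of $\Hilb_k$ and the $(\Psi_\ell)$ basis of $\Hilb_{D,k}$. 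This indexing/window argument — pinning down exactly which range of $n$ the equation is valid on, and hence the precise description of $E_k$ — is the main obstacle; everything else is the scalar bookkeeping described above.
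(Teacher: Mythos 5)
Your operator dictionary and phase bookkeeping (the identities $L=S^D$, $(q^{-1}M)^{-2ab}L^{-2}=R^{-D}$, and the exponent $-\frac{a^2+b^2}{D}+\frac{D}{4}$) is exactly the routine part and matches the paper's Lemma \ref{lem:M_R_S} and Lemma \ref{lem:second_membre}. The genuine gap is in your treatment of the source term $Z_k^0$. You propose to identify $Z_k^0$ with a scalar multiple of $\Psi_0$ modulo $O(k^{-\infty})$ by matching $L$-invariance and the value at $0$. This is false: inside $\Hilb_{D,k}$ the eigenspace $\ker(L-\id)=\ker(S^D-\id)$ is $D$-dimensional, spanned by the $\Psi_{2km}$, $m\in\Z/D\Z$, and $Z_k^0$ is in addition $T^*_\mu$-invariant (it lies in $\Hilb_k$), hence it is an exact multiple of $\sum_{m\in\Z/D\Z}T^*_{m\mu}\Psi_0$ and carries components of comparable (not small) size along \emph{every} $\Psi_{2km}$. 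Matching values at the single point $0$ cannot identify the vector; it only shows the $m\neq 0$ components are exponentially small \emph{at that point}. In particular your intermediate assertion that substitution yields $(\id-R^{-D})Z_k=\al(k)R^{-ab}(R^a-R^{-a})(R^b-R^{-b})\Psi_0$ up to $O(k^{-\infty})$ is not true in any norm sense.

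This error then propagates into your explanation of the qualifier ``$\bmod\ E_k^\perp$''. It does not come from any loss of $2k$-periodicity or a window of validity of the Jones recurrence: equation (\ref{eq:equation_non_homogene}) is an exact identity in $\Hilb_k$, and the rewriting of the left-hand side as $(\id-R^{-D})Z_k$ is an exact operator identity on $\Hilb_{D,k}$. The sole source of the projection is the right-hand side: expanding $Z_k^0=\sum_{m\in\Z/D\Z}\al_m(k)\Psi_{2km}$ (with $\al_0(k)=\tfrac{i}{\sqrt2}+O(k^{-\infty})$, obtained exactly as above, not asymptotically), the operators $(q^{-1}M)^{-ab+p_i}$ send $\Psi_{2km}$ to multiples of $\Psi_{2km-ab+p_i}$, and since $|p_i|\leqslant a+b$ the $m\neq 0$ terms have indices outside the range defining $E_k$; projecting mod $E_k^\perp$ discards exactly these, leaving $\al(k)Y_k$ with $\al(k)=\al_0(k)q^{-\frac{a^2+b^2}{D}+\frac{D}{4}}$. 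Your fallback plan --- verifying $\langle(\id-R^{-D})Z_k-\al(k)Y_k,\Psi_n\rangle=0$ on the window directly from (\ref{eq:Pol_jones}) and the change of basis --- is a statement of what must be proved rather than a proof, and as motivated by the incorrect picture it would even suggest the wrong kind of conclusion (an $O(k^{-\infty})$ identity rather than the exact one, which the later Lemmas \ref{lem:coefs}--\ref{sec:periodocite} rely on). The missing idea is precisely the exact decomposition of $Z_k^0$ over the $D$ translates $T^*_{m\mu}\Psi_0$ and the observation that only the $m=0$ piece survives the projection onto $E_k$.
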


For the proof we establish various lemmas. 

\begin{lem} \label{lem:M_R_S}
For any integer $r$, we have: $M^r = q^{\frac{r^2}{D}} S^{-2 r } R ^{r} = q^{ -\frac{r^2}{D}} R^r S^{-2r}$.
\end{lem}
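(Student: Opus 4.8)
The plan is to obtain both identities as a direct consequence of the Heisenberg commutation relations satisfied by the translation operators $T_x^*$, since $M$, $S$ and $R$ are all of this type. First I would record the multiplication law implied by the explicit formula \eqref{eq:pull_back}: a one-line computation gives $T_x^* T_y^* = \exp\bigl(i\tfrac{k}{2}\om(x,y)\bigr)\,T_{x+y}^*$, and in particular $(T_x^*)^r = T_{rx}^*$ because $\om(x,x)=0$. The upshot is that powers of a single translation operator produce no phase, so every factor of $q$ occurring in the lemma must come from reordering two genuinely different translations.

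Next I would reduce each monomial to a single $T^*$. From \eqref{eq:def_M_L_q} and \eqref{eq:def_R_S_q^1/D}, together with the simplification $D\mu/2kD=\mu/2k$,
$$ M^r = T^*_{r\mu/2k}, \qquad S^{-2r} = T^*_{r\la/kD}, \qquad R^r = T^*_{\,r\mu/2k - r\la/kD}. $$
The key structural observation is that the translation parts of $S^{-2r}$ and $R^r$ sum exactly to the translation defining $M^r$: indeed $\tfrac{r}{kD}\la + \bigl(\tfrac{r}{2k}\mu-\tfrac{r}{kD}\la\bigr)=\tfrac{r}{2k}\mu$. Hence the composition law forces both $S^{-2r}R^r$ and $R^rS^{-2r}$ to coincide with $M^r$ up to the scalars $\exp\bigl(i\tfrac{k}{2}\om(a,b)\bigr)$ and $\exp\bigl(i\tfrac{k}{2}\om(b,a)\bigr)$ respectively, where $a=\tfrac{r}{kD}\la$ and $b=\tfrac{r}{2k}\mu-\tfrac{r}{kD}\la$.

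It then remains to evaluate these two scalars, which is the only genuine computation. By bilinearity of $\om$, together with $\om(\la,\la)=0$ and $\om(\la,\mu)=4\pi$, one finds that $\om(a,b)$ has magnitude $\tfrac{2\pi r^2}{k^2D}$, so that $\exp\bigl(i\tfrac{k}{2}\om(a,b)\bigr)$ equals a power $q^{\pm r^2/D}$ after inserting the normalization $q^{1/D}=e^{i\pi/(kD)}$; the reversed product carries the conjugate scalar because $\om(b,a)=-\om(a,b)$. Rearranging the two relations to solve for $M^r$ then reproduces exactly the two expressions in the statement, whose mutual consistency reflects precisely the fact that $S^{-2r}$ and $R^r$ commute only up to a power of $q$. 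I expect no conceptual obstacle here: the whole content is the bilinearity and antisymmetry of $\om$ together with $\om(\la,\mu)=4\pi$, and the only place where care is genuinely required is the bookkeeping of the factor $\tfrac{k}{2}$ in \eqref{eq:pull_back} and of the normalization $q^{1/D}=e^{i\pi/(kD)}$, which together pin down the sign of each exponent of $q$ appearing in the statement and must be carried out consistently so that the two displayed forms of $M^r$ agree.
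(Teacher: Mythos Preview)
Your approach is exactly the one the paper takes: the paper's proof consists of the single observation that $T_x^*T_y^*=e^{\frac{ik}{2}\om(x,y)}T_{x+y}^*$ together with the definitions \eqref{eq:def_M_L_q} and \eqref{eq:def_R_S_q^1/D}, and you have simply spelled out the intermediate steps (identifying $M^r$, $S^{-2r}$, $R^r$ as $T^*$'s, checking that the translation vectors add correctly, and reading off the phase from $\om(\la,\mu)=4\pi$). There is no methodological difference to report.
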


\begin{proof} 
Formula (\ref{eq:loi_Heisenberg}) implies that
$$ T_x ^* T_y^* =  e^{\frac{ik}{2}  \om ( x, y) } T_{x+ y} ^* .$$ The results follows easily from the definition of $M$, $R$ and $S$, cf. Equations (\ref{eq:def_M_L_q}) and (\ref{eq:def_R_S_q^1/D}). 
\end{proof}

\begin{lem} \label{lem:Z0_nouvelle_base}
There exists sequences $\al_m(k), m \in \Z/ D \Z$ such that
$$ Z_k^0 = \sum_{m \in \Z / D \Z } \al_m (k)  \Psi_{2k m}  .$$
Furthermore $\al_0 (k) = \frac{i}{\sqrt2} + O(k^{-\infty})$.
\end{lem}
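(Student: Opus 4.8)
The plan is to use the two characterizing properties of $Z_k^0$ from \eqref{eq:car_Z^0}, namely $L Z_k^0 = Z_k^0$ and the value $Z_k^0(0)$, translated into the language of the new basis $(\Psi_\ell)$ of $\Hilb_{D,k}$. First I would express $L$ in terms of $S$ and $R$: since $L = T^*_{-\lambda/2k}$ and $S = T^*_{-\lambda/2kD}$, we have $L = S^{D}$ up to an explicit phase coming from Lemma \ref{lem:M_R_S}-type manipulations (here the two translations are collinear so the cocycle is trivial and in fact $L = S^{D}$ exactly). Acting on $\Psi_\ell$ we get $S^{D}\Psi_\ell = q^{\ell}\Psi_\ell$, so the condition $L Z_k^0 = Z_k^0$ forces the expansion $Z_k^0 = \sum_{\ell} c_\ell \Psi_\ell$ to be supported on those $\ell$ with $q^{\ell} = e^{i\pi \ell/k} = 1$, i.e. $\ell \in 2k\Z$. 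Writing $\ell = 2km$ with $m$ ranging over $\Z/D\Z$ (since $\Psi_\ell$ has period $2kD$ in $\ell$) gives exactly $Z_k^0 = \sum_{m\in\Z/D\Z}\alpha_m(k)\Psi_{2km}$.

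To pin down $\alpha_0(k)$, I would evaluate at the point $0 \in E$. By \eqref{eq:norm_phase}, $\Psi_0(0) = e^{i\pi/4}(k/2\pi)^{1/4}\Om_\lambda + O(k^{-\infty})$, and by Remark \ref{sec:micro_support_base} the microsupport of $(\Psi_{2km})$ is $-\tfrac{2km}{2kD}\lambda + \Z\lambda + \R\mu = -\tfrac{m}{D}\lambda + \Z\lambda + \R\mu$ (using that $\Psi_{2km} = R^{2km}\Psi_0 = T^*_{2km(D\mu-2\lambda)/2kD}\Psi_0$, whose $S$-eigenvalue shows the translate in the $\lambda$-direction). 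For $m\not\equiv 0 \pmod D$ the point $0$ does not lie on this microsupport, so $\Psi_{2km}(0) = O(k^{-\infty})$; more precisely one gets exponential decay from \eqref{eq:estim_1_psi_0}. Hence $Z_k^0(0) = \alpha_0(k)\,\Psi_0(0) + O(k^{-\infty}) = \alpha_0(k)\,e^{i\pi/4}(k/2\pi)^{1/4}\Om_\lambda + O(k^{-\infty})$. Comparing with $Z_k^0(0) = \tfrac{e^{i3\pi/4}}{\sqrt2}(k/2\pi)^{1/4}\Om_\lambda + O(k^{-\infty})$ from \eqref{eq:car_Z^0} yields $\alpha_0(k) = \tfrac{1}{\sqrt2}e^{i3\pi/4}e^{-i\pi/4} + O(k^{-\infty}) = \tfrac{i}{\sqrt2} + O(k^{-\infty})$.

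The one point requiring care — and the main obstacle — is the bookkeeping around the identification $L = S^D$ and the precise normalization of the $\Psi_\ell$: I must make sure no stray phase enters when rewriting $T^*_{-\lambda/2k}$ as a power of $T^*_{-\lambda/2kD}$ (it does not, because \eqref{eq:loi_Heisenberg} gives $T^*_x T^*_y = e^{\frac{ik}{2}\om(x,y)}T^*_{x+y}$ and $\om(\lambda,\lambda)=0$), and that the orthonormal basis $(\Psi_\ell)$ satisfying \eqref{eq:base_H_DDDD}–\eqref{eq:norm_phase} is indeed the one appearing in the claimed expansion. Everything else is a routine consequence of the concentration estimates \eqref{eq:estim_1_psi_0}–\eqref{eq:estim_2_psi_0} already recorded for translates of $\Psi_0$.
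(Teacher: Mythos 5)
Your proposal is correct and in substance follows the paper's own route: the paper also extracts the support of the expansion from the eigenvector condition $LZ^0_k=Z^0_k$ (it does so by exhibiting the generator $P_k=\sum_{m}T^*_{m\mu}\Psi_0$ of the eigenline $\Hilb_k\cap\ker(T^*_{\la/2k}-\id)$ and noting each $T^*_{m\mu}\Psi_0$ is a multiple of $\Psi_{2mk}$, where you instead diagonalize $L=S^D$ on $\Hilb_{D,k}$), and then fixes the coefficient by evaluating at $0$, exactly as you do. One small correction: the microsupport of $\Psi_{2km}=T^*_{(m/D)(D\mu-2\la)}\Psi_0$ is $-\tfrac{m}{D}(D\mu-2\la)+\Z(D\mu-2\la)+\R\la$ (lines parallel to $\la$, translated in the $D\mu-2\la$ direction), not $-\tfrac{m}{D}\la+\Z\la+\R\mu$; this slip is harmless here, since $0$ lies on neither set when $m\not\equiv 0 \bmod D$, and the estimate (\ref{eq:estim_1_psi_0}) applied to $\Psi_{2km}(0)=\Psi_0\bigl(\tfrac{m}{D}(D\mu-2\la)\bigr)$ gives the exponential decay you invoke, which together with the boundedness of the coefficients $\al_m(k)$ (clear since $\|Z^0_k\|$ is bounded) justifies the comparison at $0$.
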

\begin{proof} 
By Theorem \ref{sec:knot_state_charac}, $Z_k^0$ belongs to the line $\Hilb_k \cap \ker ( T^*_{\la/2k } - \id )$. Let us check that this eigenline is generated by the vector
$$P_k =  \sum_{m \in \Z / D \Z}  T^*_{m \mu} \Psi_0 .$$
Since $T^*_{D \mu } \Psi_0 = \Psi_0$, we have that $T^*_{\mu} P_k = P_k$, so by (\ref{eq:carH}), $P_k$ belongs to $\Hilb_k$.  
Applying lemma \ref{lem:M_R_S} to $T^*_{\mu} = M^{2k}$, we obtain that $T^*_{m \mu} \Psi_0$ is a multiple of $\Psi_{2mk}$. By Equation (\ref{eq:base_H_DDDD}), the eigenspace $\Hilb_{D,k} \cap \ker ( T^*_{\la/2k} -\id ) $ is generated by the vectors $\Psi_{2mk}$, where $m$ runs overs $\Z$ mod $D\Z$. So $P_k$ belongs to $\Hilb_k \cap \ker ( T^*_{\la/2k } - \id )$. 

To compare $P_k$ and $Z_k^0$, we estimate their value at $0$. If $m \neq 0$, it follows from Equation (\ref{eq:estim_1_psi_0}) that $( T^*_{m \mu} \Psi_0)(0)$ is a $O(k^{-\infty})$. So by equation (\ref{eq:norm_phase}), 
$$ P_k(0) = e^{i \frac{\pi}{4}} \Bigl( \frac{k}{2 \pi } \Bigr)^{ 1/4} \Om_\la + O( k^{- \infty} ).$$ 
The value of $Z_k^0$ at $0$ is given in Equation (\ref{eq:car_Z^0}). So $Z_k^0 = \la_k P_k$ with $\la_k = \frac{i}{\sqrt 2} + O(k^{-\infty})$. This proves the result.       
\end{proof}

\begin{lem}  \label{lem:second_membre}
There exists sequences $\al_{m,i}(k), m \in \Z/D \Z, i =1, \ldots ,4 $ such that for any $i$, 
$$ q^{\ep_i}  ( q^{-1} M )^{- ab + p_i } Z^0_k =   \sum_{m \in \Z / D \Z}  \al_{m,i} (k) \Psi_{2km -ab + p_i} .$$
Furthermore,  $\al_{0,i} (k)$ does not depend on $i$ and is given by 
$$ \al_{0,i} (k) = \al_0 (k)  q^{-\frac{a^2 + b^2}{D} + \frac{1}{4}D} .$$ 
 \end{lem}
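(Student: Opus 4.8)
The plan is to reduce everything to Lemma \ref{lem:Z0_nouvelle_base} by commuting the operator $q^{\ep_i}(q^{-1}M)^{-ab+p_i}$ past the expansion $Z_k^0 = \sum_{m} \al_m(k)\Psi_{2km}$ and using Lemma \ref{lem:M_R_S} to rewrite a power of $M$ in terms of $R$ and $S$. First I would set $r = -ab + p_i$ and apply Lemma \ref{lem:M_R_S} in the form $M^r = q^{r^2/D} S^{-2r} R^r$, so that
\begin{gather*}
q^{\ep_i}(q^{-1}M)^{r} = q^{\ep_i} q^{-r} q^{r^2/D} S^{-2r} R^{r}.
\end{gather*}
Then I would act on $Z_k^0 = \sum_{m\in\Z/D\Z}\al_m(k)\Psi_{2km}$: since $R\Psi_n = \Psi_{n+1}$ by (\ref{eq:base_H_DDDD}), we get $R^r \Psi_{2km} = \Psi_{2km + r}$, and since $S\Psi_n = q^{n/D}\Psi_n$, the operator $S^{-2r}$ multiplies $\Psi_{2km+r}$ by the scalar $q^{-2r(2km+r)/D}$. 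Collecting the scalars, $q^{\ep_i}(q^{-1}M)^{r}Z_k^0 = \sum_m \al_{m,i}(k)\Psi_{2km+r}$ with $2km + r = 2km - ab + p_i$, which is exactly the claimed index, and
\begin{gather*}
\al_{m,i}(k) = \al_m(k)\, q^{\ep_i - r + r^2/D}\, q^{-2r(2km+r)/D}.
\end{gather*}
This establishes the existence of the sequences $\al_{m,i}(k)$.

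For the second assertion I would specialize to $m=0$, where the $m$-dependent factor $q^{-4krm/D}$ disappears, leaving $\al_{0,i}(k) = \al_0(k)\, q^{\ep_i - r - r^2/D}$ (after combining $q^{r^2/D}$ with $q^{-2r\cdot r/D}$). It then remains a purely arithmetic check that for each of the four pairs $(\ep_i, p_i) \in \{(1,-a-b),(-1,-a+b),(1,a+b),(-1,a-b)\}$ the exponent $\ep_i - r - r^2/D$ equals $-\frac{a^2+b^2}{D} + \frac{1}{4}D$, recalling $D = 2ab$ and $r = -ab+p_i$. For instance with $(\ep_i,p_i) = (1,-a-b)$ one has $r = -ab - a - b$, so $-r = ab + a + b$, and $r^2/D = (ab+a+b)^2/(2ab)$; expanding $(ab+a+b)^2 = a^2b^2 + a^2 + b^2 + 2a^2b + 2ab^2 + 2ab$ and dividing by $2ab$ gives $\frac{ab}{2} + \frac{a^2+b^2}{2ab} + a + b + 1$, and $\ep_i - r - r^2/D = 1 + ab + a + b - (\frac{ab}{2} + \frac{a^2+b^2}{2ab} + a + b + 1) = \frac{ab}{2} - \frac{a^2+b^2}{2ab} = \frac{D}{4} - \frac{a^2+b^2}{D}$, as desired. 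The sign flips between the four cases conspire so that the cross terms cancel identically; I would simply record that the remaining three verifications are analogous.

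The only mild subtlety — and the closest thing to an obstacle — is bookkeeping of powers of $q^{1/D} = e^{i\pi/kD}$: the exponents involve division by $D$, so one must work in the group generated by $q^{1/D}$ rather than by $q$, and be careful that $S^{-2r}R^r$ (rather than $R^r S^{-2r}$) is the correct ordering coming out of Lemma \ref{lem:M_R_S}, since the two differ by a power of $q^{1/D}$. Apart from that, the proof is a direct computation with no analytic input; the asymptotic statement $\al_0(k) = \frac{i}{\sqrt 2} + O(k^{-\infty})$ is inherited verbatim from Lemma \ref{lem:Z0_nouvelle_base} and enters only through the factor $\al_0(k)$, so no new error analysis is needed here.
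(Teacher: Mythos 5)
Your proof is correct and follows essentially the same route as the paper: expand $Z_k^0$ via Lemma \ref{lem:Z0_nouvelle_base}, use Lemma \ref{lem:M_R_S} to write $(q^{-1}M)^r$ in terms of $S^{-2r}R^r$, read off that each $\Psi_{2km}$ is sent to a multiple of $\Psi_{2km+r}$, and compute the scalar $q^{\ep_i-r-r^2/D}$ at $m=0$ together with the arithmetic identity $\ep_i-r_i-r_i^2/D=-\frac{a^2+b^2}{D}+\frac{D}{4}$. The only cosmetic difference is that you record the explicit multiple for general $m$, whereas the paper only needs that it is some multiple; your case check of the exponent identity is accurate and the remaining cases do work out as claimed.
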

\begin{proof} 
By lemma \ref{lem:Z0_nouvelle_base},
$$  q^{\ep_i}  ( q^{-1} M )^{- ab + p_i } Z^0_k =  \sum_{m \in \Z / D \Z } \al_m (k)   q^{\ep_i}  ( q^{-1} M )^{- ab + p_i }  \Psi_{2k m}  .$$ 
By lemma \ref{lem:M_R_S}, $ ( q^{-1} M)^r \Psi_{2mk}$ is a multiple of $\Psi_{2mk + r }$, which shows the first part of the result. Let us compute this multiple for $m=0$, 
$$ ( q^{-1} M)^r \Psi_0 = q ^{-r + \frac{r^2}{D}} S^{-2r} \Psi_r = q ^{-r - \frac{r^2}{D}} \Psi_r$$
By a straightforward computation, we get 
\begin{gather*} 
\ep_i - r_i - \frac{ r_i^2}{D} = - \frac{ a^2 + b ^2}{D} +  \frac{1}{4} D
\end{gather*}
where $r_i = -ab + p_i$. So 
$$ q^{\ep_i}  ( q^{-1} M )^{- ab + p_i } \Psi_0 =   q^{-\frac{a^2 + b^2}{D} + \frac{1}{4}D}  \Psi_{-ab + p_i}$$
and the result follows. 
\end{proof}

\begin{proof}[Proof of Theorem \ref{theo:change-lattice}]
By lemma \ref{lem:M_R_S}, equation (\ref{eq:equation_non_homogene}) is equivalent to
\begin{gather*}
 ( \id - R^{-D} ) Z_k =  \sum_{i =1,\ldots, 4} \ep_i q^{\ep_i}  ( q^{-1} M )^{- ab + p_i } Z^0_k
\end{gather*}
By Lemma \ref{lem:second_membre}, it comes
\begin{xalignat*}{2} 
 ( \id - R^{-D} ) Z_k  = &   \sum_{i =1,\ldots, 4} \sum_{ m \in \Z/ D \Z } \ep_i  \al_{m,i}(k)   \Psi_{2mk -ab + p_i}  \\ =& 
 \sum_{i =1,\ldots, 4}  \ep_i  \al_{0,i}(k)   \Psi_{-ab + p_i}  \mod E^\perp_k
\end{xalignat*}  
Here we used that $|p_i| \leqslant a+ b$, so that the vectors $\Psi_{2mk -ab + p_i}$ belong to $E_k^{\perp}$ when $m \neq 0$.  
Let us remind that the coefficients $\al_{i,0} (k) $ do not depend on $i$. Set $\al(k) = \al_{0,i}(k)$.  We have $ \Psi_{-ab + p_i} = R^{-ab + p_i} \Psi_0$ and
$$ \sum \ep_i R^{ p_i} =  ( R^a - R^{-a}) ( R^b - R^{-b})$$
and the result follows. 
\end{proof}

\subsection{Solution of the recurrence relations} 

From now on, we denote by $(\Psi_\ell, \; \ell \in \Z / 2kD \Z )$ the basis of $\Hilb_{D,k}$ determined by~(\ref{eq:base_H_DDDD}) and  (\ref{eq:norm_phase}). 
Introduce the vectors of $\Hilb_{D,k}$
\begin{gather}  \label{eq:def_phil}
\Phi_\ell = \frac{1}{\sqrt { 2 k D}} \sum _{n \in \Z / 2k D \Z} e^{ \frac{2i \pi }{D} n \ell} \Psi_n, \qquad \ell  \in \Z / D \Z
\end{gather}
Our aim is to prove the following theorem
\begin{theo} \label{theo:quantum} 
There exists sequences $\ga_\ell (k)$, $\ell \in \Z / D \Z$ such that for any positive integer $k$, the state $Z_k$ of the torus knot with parameter $(a,b)$ satisfies  
$$ Z_k = \sum_{\ell \in \Z/ D \Z} \ga_\ell(k) \Phi_\ell \mod E_{k,+}^\perp$$ 
where $E_{k, +}$ is the subspace of $\Hilb_k$ generated by the family $( \Psi_n, -ab + a + b \leqslant n < 2k -ab -a -b )$.
Furthermore the $\ga_\ell(k)$'s verify the following equations
\begin{gather*}
   \ga_{-1} (k) = 0 , \qquad \ga_0 (k) = \frac{1}{2} \be_0(k) \qquad \text{and} \\
\forall \ell, \quad  \ga_{\ell } (k) = e^{ \frac{2i \pi}{D} 2k ( \ell - 1 )} \ga_{\ell-2}(k)  + \be_\ell (k) ,
\end{gather*}
where   for any $\ell \in \Z / D \Z$, 
$$\be_\ell (k)  =  C_k (-1)^\ell \sin \Bigl( \frac{\pi \ell }{a} \Bigr) \sin \Bigl( \frac{\pi \ell }{b} \Bigr) + O(k^{-\infty}) $$
with $C_k = - 4 \al (k) \bigl( \frac{2k }{D} \bigr)^{1/2}  $, the constant $\al (k)$ being given in Theorem \ref{theo:change-lattice}. 
\end{theo}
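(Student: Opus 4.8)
The plan is to translate Theorem~\ref{theo:change-lattice}, which is an equation in the $\Psi$-basis, into an equation for the coefficients $\ga_\ell(k)$ of $Z_k$ expanded in the Fourier basis $(\Phi_\ell)$. First I would record how the operator $R$ acts in the two bases: by (\ref{eq:base_H_DDDD}) one has $R\Psi_n = \Psi_{n+1}$, hence on $\Phi_\ell = (2kD)^{-1/2}\sum_n e^{2i\pi n\ell/D}\Psi_n$ one computes $R\Phi_\ell = e^{-2i\pi\ell/D}\Phi_\ell$ up to the obvious boundary index identifications, so $R^{-D}\Phi_\ell = \Phi_\ell$ — which is why $\id - R^{-D}$ is not injective and why the theorem only determines $Z_k$ modulo a complement. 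More precisely I would work with the genuine eigenvector decomposition: since $R^D$ is diagonal in the $\Psi$-basis with eigenvalue depending on $n \bmod 2k$, the kernel of $\id - R^{-D}$ acting on $\Hilb_k$ is exactly the span of the $\Phi_\ell$, and the equation $(\id - R^{-D})Z_k = \al(k)Y_k \bmod E_k^\perp$ then determines the \emph{non-kernel} part of $Z_k$ while leaving the $\Phi_\ell$-components $\ga_\ell(k)$ to be pinned down by the boundary/normalization data. So the structure of the argument is: (i) invert $\id - R^{-D}$ on the complement of the $\Phi_\ell$; (ii) extract from the recursion the constraints on the remaining free parameters $\ga_\ell(k)$.

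Next I would expand $Y_k = \sum_i \ep_i \Psi_{-ab+p_i}$ in the Fourier basis: $\langle Y_k, \Phi_\ell\rangle$ is, up to a normalizing $(2kD)^{-1/2}$ and a phase $e^{2i\pi(-ab)\ell/D}$, the sum $\sum_i \ep_i e^{2i\pi p_i\ell/D}$. Plugging in $(\ep_i,p_i) = (1,-a-b),(-1,-a+b),(1,a+b),(-1,a-b)$ and $D = 2ab$, this sum telescopes into a product of two sines: $\sum_i \ep_i e^{i\pi p_i\ell/(ab)} = -4\sin(\pi\ell/a)\sin(\pi\ell/b)$ up to a sign $(-1)^\ell$ coming from the $e^{2i\pi(-ab)\ell/D} = (-1)^\ell$ prefactor (this is exactly the same elementary factorization $R^{-ab}(R^a-R^{-a})(R^b-R^{-b})$ already used at the end of the proof of Theorem~\ref{theo:change-lattice}, now evaluated at the character $R \mapsto e^{-2i\pi\ell/D}$). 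This identifies $\be_\ell(k)$ up to the claimed $O(k^{-\infty})$ and fixes $C_k = -4\al(k)(2k/D)^{1/2}$, the factor $(2k/D)^{1/2}$ being the ratio between the normalizations of $\Hilb_k$ (dimension $2k$) and $\Hilb_{D,k}$ (dimension $2kD$) once the $E_k^\perp$-error is absorbed.

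Then comes the recursion for the $\ga_\ell$. Writing $Z_k = \sum_\ell \ga_\ell(k)\Phi_\ell \bmod E_{k,+}^\perp$ and applying $\id - R^{-D}$: since $R^{-D}$ on $\Hilb_k$ is \emph{not} the identity (it becomes the identity only on the full $\Phi_\ell$, but the indices wrap mod $2kD$ with a shift detecting $n \bmod 2k$), acting with $R^{-D}$ on $\Phi_\ell$ produces $\Phi_\ell$ shifted by a factor $e^{-2i\pi\cdot 2k\ell/D}$ together with a relabeling $\ell \mapsto \ell-2$ — this is the origin of the $e^{2i\pi\cdot 2k(\ell-1)/D}$ coefficient and of the fact that $\ell$ and $\ell-2$ get coupled. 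Matching Fourier coefficients in $(\id - R^{-D})Z_k = \al(k)Y_k \bmod E_k^\perp$ gives $\ga_\ell(k) - e^{2i\pi\cdot 2k(\ell-1)/D}\ga_{\ell-2}(k) = \be_\ell(k)$, i.e. the stated three-term relation. The boundary conditions $\ga_{-1}(k)=0$ and $\ga_0(k) = \tfrac12\be_0(k)$ I would obtain from the alternation property $Z_k \in \Hilb_k^{\alt}$ (equivalently $\xi_\ell(-x)=\xi_{-\ell}(x)$, hence $\Psi_n(-x)=\Psi_{-n}(x)$ and $\Phi_\ell$ has a reflection symmetry), which forces the $\ell=0$ and $\ell=-1$ (the two fixed classes of $\ell \mapsto -\ell$ on $\Z/D\Z$ when $D$ is even) components to behave specially: $\be_{-1}$ vanishes because $\sin(-\pi/a)\sin(-\pi/b)$ combined with the relevant parity gives zero, and the $\ell=0$ diagonal coefficient $e^{2i\pi\cdot 2k(-1)/D}$ at $\ell=0$ versus the self-coupling forces the factor $\tfrac12$.

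The main obstacle I expect is the careful bookkeeping at step (iii): keeping track of exactly which indices fall into $E_{k}^\perp$ versus $E_{k,+}^\perp$ as one applies $R^{\pm D}$, $R^{\pm ab}$, $R^{\pm(a\pm b)}$ to $\Psi_0$ (the shifts can push indices out of the ``good'' window $[-ab+a+b,\ 2k-ab-a-b)$), and simultaneously controlling the wrap-around mod $2kD$ that converts the shift-by-$D$ into the phase $e^{2i\pi\cdot 2k\ell/D}$ and the index shift $\ell \mapsto \ell - 2$ in the $\Phi$-basis. The analytic estimates ($O(k^{-\infty})$ for $\al(k)$ and for $Z_k^0(0)$) are already in hand from Theorems~\ref{sec:knot_state_charac} and~\ref{theo:change-lattice}, so no new semiclassical input is needed; the difficulty is purely the combinatorics of the finite Heisenberg group action, and the verification that the special values at $\ell = 0, -1$ come out as stated rather than by fiat.
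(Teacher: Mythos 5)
There is a genuine gap, and it sits exactly where the real work of the theorem lies: the origin of the three-term relation $\ga_\ell = e^{\frac{2i\pi}{D}2k(\ell-1)}\ga_{\ell-2} + \be_\ell$ and of the initial conditions. Your computation of $\be_\ell$ (discrete Fourier transform of $Y_k$, the factorization $\sum_i \ep_i e^{-\frac{2i\pi}{D}\ell p_i} = -4\sin(\pi\ell/a)\sin(\pi\ell/b)$, the $(-1)^\ell$ from $R^{-ab}$, the $\sqrt{2k/D}$ normalization) matches the paper's Lemma \ref{sec:saut} and is fine. But your mechanism for the recursion is incorrect: in $\Hilb_{D,k}$ the operator $R$ is an exact cyclic shift of the $\Psi_n$, so $R^{-D}\Phi_\ell = \Phi_\ell$ exactly — there is no ``wrap mod $2kD$ detecting $n \bmod 2k$'' that turns $R^{-D}$ into a phase $e^{-\frac{2i\pi}{D}2k\ell}$ together with a relabeling $\ell \mapsto \ell-2$ (indeed the relevant window of indices has length about $4k \leqslant 2kD$, so the $2kD$-periodicity never even enters). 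Moreover, on the window $E_{k,+}$ the inhomogeneous term $\al(k)Y_k$ is invisible: its components $\Psi_{-ab+p_i}$ have indices $\leqslant -ab+a+b$, at or below the lower edge of $E_{k,+}$, so ``matching Fourier coefficients'' of $(\id-R^{-D})Z_k = \al(k)Y_k$ inside a single window only yields $D$-periodicity of the coefficients (this is the paper's Lemma \ref{lem:coefs}) and produces no coupling between $\ell$ and $\ell-2$ and no $\be_\ell$. What actually closes the recursion is an input you never use: $Z_k$ lies in $\Hilb_k$, i.e. $M^{2k}Z_k = Z_k$, and by Lemma \ref{lem:M_R_S} one has $M^{2k} = e^{\frac{2i\pi}{D}2k}S^{-4k}R^{2k}$, so $M^{2k}\Phi_\ell = e^{\frac{2i\pi}{D}2k(1-\ell)}\Phi_{\ell-2}$; this meridian translation maps a second window $E_{k,-}$ (indices shifted by $-2k$) onto $E_{k,+}$. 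The paper's proof therefore introduces two families of coefficients $\ga^{\pm}_\ell$ attached to $E_{k,\pm}$, and the theorem follows from three relations: the jump $\ga^+_\ell = \ga^-_\ell + \be_\ell$ across the inhomogeneity located between the windows, the translation relation $\ga^-_\ell = e^{\frac{2i\pi}{D}2k(\ell-1)}\ga^+_{\ell-2}$ from $M^{2k}$-invariance, and the antisymmetry relation $\ga^-_{-\ell} = -\ga^+_\ell$ from $Z_k(-x) = -Z_k(x)$. Your single-window setup has no analogue of the second and third relations, so the free ``kernel'' parameters you correctly identify are never pinned down.

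The initial conditions are also not obtained the way you propose. You claim $\ga_{-1}(k)=0$ because $\be_{-1}$ vanishes; but $\be_{-1} = -C_k\sin(\pi/a)\sin(\pi/b) + O(k^{-\infty}) \neq 0$ (the sine product is even in $\ell$), and in any case the recurrence alone could not force $\ga_{-1}=0$. In the paper, $\ga_{-1}=0$ comes from combining the antisymmetry relation at $\ell=-1$ with the translation relation at $\ell=1$, giving $\ga^+_{-1} = -\ga^+_{-1}$; and $\ga_0 = \tfrac12\be_0$ comes from $\ga^+_0 = \ga^-_0 + \be_0$ together with $\ga^-_0 = -\ga^+_0$. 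Your intuition that the alternation property is responsible for the special behavior at $\ell=0,-1$ is correct in spirit, but without the two-window bookkeeping (and the $M^{2k}$ relation) it cannot be turned into a proof as written.
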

Observe that the $\Phi_\ell$'s can be characterized up to a $O(k^{-\infty})$ by the following equation 
\begin{gather} \label{eq:car_phi_0}
 R \Phi_0 = \Phi_0 , \qquad \Phi_0 ( 0 ) = i \Bigl( \frac{k}{2 \pi } \Bigr)^{1/4} \Om_{D \mu - 2 \la} + O(k^{-\infty}) \\ 
\label{eq:car_phi_ell}
 \Phi_\ell = S ^{2k \ell } \Phi_0 , \qquad \ell \in \Z / D \Z
\end{gather}
Here the estimate of $\Phi_0$ follows from equation (\ref{eq:phi_asymptot}) taking into account the difference in the normalization of $\Psi_0$ (we used $\Om_{\la}$ instead of $\Om_{- \la}$).

The remainder of this section is devoted to the proof of Theorem \ref{theo:quantum}.
Let $E_{k,+}$ be the subspace of $\Hilb_k$ introduced in Theorem \ref{theo:quantum} and let $E_{k, -}$ be the subspace generated by the family $( \Psi_n, -2k - ab + a  + b \leqslant n < -ab -a -b )$.  

\begin{lem} \label{lem:coefs}
For any positive $k$, there exists sequences $\ga^+_\ell , \ga^{-}_\ell , \; \ell \in \Z / D \Z$ such that  
$$  Z_k =  \sum_{\ell \in \Z/ D \Z} \ga_\ell^+ \Phi_\ell   \mod E_{k, +}^\perp$$
and 
$$ Z_k =  \sum_{\ell \in \Z/ D \Z} \ga_\ell^- \Phi_\ell \mod E_{k, -}^{\perp}.$$ 
\end{lem}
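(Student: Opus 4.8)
\textbf{Proof proposal for Lemma \ref{lem:coefs}.}

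The plan is to exploit the two characterizations of $Z_k$ already at our disposal: Theorem \ref{theo:change-lattice}, which tells us $(\id - R^{-D}) Z_k$ is a known combination of the $\Psi_{-ab+p_i}$ modulo $E_k^\perp$, and the original defining formula \eqref{eq:def_Z_k} together with the relation $T_\mu^* Z_k = q^{?}\cdots$ — more precisely, the fact that $Z_k \in \Hilb_k$, i.e. $Z_k$ is fixed by $T_\mu^* = M^{2k}$. Recall from Lemma \ref{lem:M_R_S} that $M^{2k}$ acts on each $\Psi_n$ as a scalar times $\Psi_{n+2k\cdot 0}$ — in fact $T_\mu^* = M^{2k}$ sends $\Psi_n$ to a multiple of $\Psi_{n+0}$? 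No: by Lemma \ref{lem:M_R_S}, $M^{r} = q^{-r^2/D} R^r S^{-2r}$, so $M^{2k}$ shifts the index by $2k$ and rescales via $S^{-4k}$. The upshot is that invariance under $T_\mu^*$ forces the coefficients of $Z_k$ in the basis $(\Psi_n)$ to be supported, up to the $S$-eigenvalue twist, on a single residue class modulo $2k$; equivalently, $Z_k$ lies in the span of the $\Phi_\ell$ once we pass to the appropriate index window. This is exactly why the $\Phi_\ell$, defined in \eqref{eq:def_phil} as the discrete Fourier transform over $\Z/2kD\Z$ localized to frequencies in $\Z/D\Z$, are the natural building blocks.

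First I would expand $Z_k = \sum_{n \in \Z/2kD\Z} c_n(k)\, \Psi_n$ and use $T_\mu^* Z_k = Z_k$ to pin down a recursion relating $c_{n+2k}$ to $c_n$: writing $T_\mu^* \Psi_n = q^{\star}\Psi_{n+2k}$ with an explicit phase $q^\star$ coming from Lemma \ref{lem:M_R_S} applied with $r = 2k$, invariance gives $c_n = q^{\star} c_{n+2k}$ for all $n$, hence $c_n$ is determined on all of $\Z/2kD\Z$ by its values on any window of length $2k$, say $\{-ab+a+b \le n < 2k-ab-a-b\}$ (this is precisely $E_{k,+}$) or the shifted window $E_{k,-}$. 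Second, I would observe that within such a window, the $D$-dimensional family $(\Phi_\ell)_{\ell \in \Z/D\Z}$, when restricted (mod the complementary window's $^\perp$), forms a basis for the subspace of $\Hilb_k$ cut out by the $T_\mu^*$-invariance, because the Fourier transform in \eqref{eq:def_phil} diagonalizes the shift-by-$2k$ twisted action and the $\Phi_\ell$ are exactly its eigenvectors with the $D$ distinct relevant eigenvalues. Therefore there exist coefficients $\ga_\ell^+$ with $Z_k = \sum_\ell \ga_\ell^+ \Phi_\ell \bmod E_{k,+}^\perp$, and running the same argument with the window $E_{k,-}$ yields $\ga_\ell^-$.

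Concretely, the two identities are obtained by projecting the equation of Theorem \ref{theo:change-lattice} — or rather the full structure of $Z_k$ — onto the two windows: on $E_{k,+}$ one reads off the $\ga_\ell^+$ from the components of $Z_k$ on $\Psi_n$ for $n$ in the positive window, and on $E_{k,-}$ one reads off $\ga_\ell^-$ from $\Psi_n$ for $n$ in the negative window; the difference between the two is governed precisely by the inhomogeneous term $\al(k) Y_k$ of Theorem \ref{theo:change-lattice}, which is supported near $n = -ab + p_i$, i.e. straddling the two windows. That interplay is what will feed the recursion for the $\ga_\ell(k)$ in Theorem \ref{theo:quantum}, but for the present lemma we only need existence of both expansions. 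The main obstacle is bookkeeping: one must check that the index windows $E_{k,+}$ and $E_{k,-}$ each have length exactly $2k$ so that reduction mod the complementary $^\perp$ is an isomorphism onto the $T_\mu^*$-invariant subspace, and that the shift $R^{\pm D}$ together with the Fourier supports lands the $\Phi_\ell$-components entirely inside the chosen window up to $O(k^{-\infty})$; this requires the inequalities $|p_i| \le a+b$ and $D = 2ab$ to control the overlap, exactly as in the proof of Theorem \ref{theo:change-lattice}. Once the windows are seen to have the right length, the rest is linear algebra over the finite Heisenberg representation and follows formally.
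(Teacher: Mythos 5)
There is a genuine gap: the mechanism you lean on cannot produce the conclusion. What the lemma asserts is that the coefficients $\langle Z_k,\Psi_n\rangle$ agree with a $D$-periodic sequence in $n$ as $n$ runs over each of the two index windows defining $E_{k,+}$ and $E_{k,-}$, since the $\Phi_\ell$ of \eqref{eq:def_phil} are exactly the vectors of $\Hilb_{D,k}$ whose $\Psi_n$-coefficients are the $D$-periodic exponentials $e^{2i\pi n\ell/D}$. The invariance $T^*_\mu Z_k=M^{2k}Z_k=Z_k$, which is the engine of your argument, only relates $\langle Z_k,\Psi_n\rangle$ to $\langle Z_k,\Psi_{n+2k}\rangle$ up to a phase (Lemma \ref{lem:M_R_S}); it imposes no constraint at all among coefficients inside a single window, whose length is $2k-2(a+b)$, not ``exactly $2k$'' as you assert. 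In particular your key claim that the family $(\Phi_\ell)_{\ell\in\Z/D\Z}$ ``forms a basis for the subspace of $\Hilb_k$ cut out by the $T_\mu^*$-invariance'' is false: that subspace is all of $\Hilb_k$, of dimension $2k$, not $D$, and the $\Phi_\ell$ are not even $T^*_\mu$-invariant (one has $M^{2k}\Phi_\ell$ proportional to $\Phi_{\ell-2}$, which is precisely what Lemma \ref{sec:periodocite} exploits at a later stage). A generic element of $\Hilb_k$ does not satisfy the conclusion of Lemma \ref{lem:coefs}, so no argument using only $Z_k\in\Hilb_k$ can succeed; moreover no $O(k^{-\infty})$ should enter, as the statement is exact.

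The missing idea is the one-line use of Theorem \ref{theo:change-lattice}: because $|p_i|\leqslant a+b$, the indices $-ab+p_i$ carrying the inhomogeneous term $\al(k)Y_k$ lie outside (the interior of) both windows, hence $(\id-R^{-D})Z_k=0$ modulo $E_{k,\pm}^{\perp}$. Since $R^{-D}$ shifts $\Psi_n$ to $\Psi_{n-D}$, this says exactly that $\langle Z_k,\Psi_n\rangle$ coincides with a $D$-periodic sequence on each window, and every $D$-periodic sequence is a linear combination of the exponentials $e^{2i\pi n\ell/D}$, i.e.\ of the coefficient sequences of the $\Phi_\ell$; this yields both expansions at once. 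You do invoke Theorem \ref{theo:change-lattice} and the bound $|p_i|\leqslant a+b$, but only to describe the difference $\ga_\ell^+-\ga_\ell^-$ (that is the content of Lemma \ref{sec:saut}, a later step) and as a ``bookkeeping check'' deferred to the end, while the existence of the expansions is derived in your text from $T^*_\mu$-invariance --- the wrong invariance for this lemma, and the point where the proposed proof breaks down.
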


\begin{proof} 
Recall that for any $i$, $|p_i| \leqslant a +b$. So by Theorem \ref{theo:change-lattice}, 
$$( \id - R^{-D} ) Z_k  = 0 \mod E_{k, \pm} ^\perp.$$ 
Thus the coefficients $\langle Z_k  , \Psi_n \rangle $ of $Z_k$  coincide with a $D$-periodic sequence when $\Psi_n \in E_{k , \pm}$.    
Since any $D$-periodic sequence is a linear combination of the sequences $\exp (2i \pi \frac{ n\ell}{D})$, $\ell \in \Z / D \Z$, the result follows. 
\end{proof}  

\begin{lem} \label{sec:saut}
We have $\ga_\ell^+ = \ga_\ell ^- + \be_\ell$, where $\be_\ell$ satisfies 
$$ \be_\ell = 4  \al (k)  \sqrt{ \frac{2k }{D}}   (-1)^{\ell+1} \sin \Bigl( \frac{\pi \ell }{a} \Bigr) \sin \Bigl( \frac{\pi \ell }{b} \Bigr)  .$$ 
\end{lem}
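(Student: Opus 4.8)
\textbf{Proof proposal for Lemma \ref{sec:saut}.}

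The plan is to compute the difference $\ga_\ell^+ - \ga_\ell^-$ directly from the data of Theorem \ref{theo:change-lattice}. By Lemma \ref{lem:coefs}, the coefficient $\langle Z_k, \Psi_n\rangle$ agrees with the $D$-periodic sequence $\sum_\ell \ga_\ell^+ e^{2i\pi n\ell/D}$ for the indices $n$ with $\Psi_n \in E_{k,+}$, and with $\sum_\ell \ga_\ell^- e^{2i\pi n\ell/D}$ for the indices with $\Psi_n \in E_{k,-}$. The two index ranges $E_{k,+}$ and $E_{k,-}$ differ by a shift of $2k$, so the discrepancy between the two periodic sequences is governed by the ``source term'' $\al(k) Y_k$ in the equation $(\id - R^{-D})Z_k = \al(k)Y_k \bmod E_k^\perp$ of Theorem \ref{theo:change-lattice}. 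Concretely, I would write, for $n$ in the relevant range, the telescoping identity coming from $(\id - R^{-D})Z_k = \al(k)Y_k$: the coefficient of $Z_k$ on $\Psi_n$ minus that on $\Psi_{n-D}$ equals $\al(k)$ times the coefficient of $Y_k$ on $\Psi_n$, and since $Y_k = \sum_i \ep_i \Psi_{-ab+p_i}$ is supported on the four indices $-ab+p_i$, iterating the recursion across the full period that separates the $E_{k,-}$-range from the $E_{k,+}$-range picks up exactly the contributions of those four indices (reduced mod $D$).

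The key computation is then: $\ga_\ell^+ - \ga_\ell^-$ is obtained by inverting the discrete Fourier transform, i.e.
\begin{gather*}
\ga_\ell^+ - \ga_\ell^- = \frac{1}{D}\sum_{n \bmod D}\bigl(\text{jump in }\langle Z_k,\Psi_n\rangle\bigr) e^{-2i\pi n\ell/D} = \frac{\al(k)}{D}\sum_{n \bmod D}\langle Y_k,\Psi_n\rangle e^{-2i\pi n\ell/D},
\end{gather*}
up to the normalization implicit in the $\Phi_\ell$ (which carries a factor $(2kD)^{-1/2}$, accounting for the $\sqrt{2k/D}$ in the statement). Since $Y_k = R^{-ab}(R^a-R^{-a})(R^b-R^{-b})\Psi_0 = \sum_i \ep_i\Psi_{-ab+p_i}$ with $(\ep_i,p_i)$ equal to $(1,-a-b),(-1,-a+b),(1,a+b),(-1,a-b)$, the Fourier sum becomes
\begin{gather*}
\sum_i \ep_i e^{-2i\pi(-ab+p_i)\ell/D} = e^{2i\pi ab\ell/D}\sum_i\ep_i e^{-2i\pi p_i\ell/D}.
\end{gather*}
Here $D = 2ab$, so $e^{2i\pi ab\ell/D} = e^{i\pi\ell} = (-1)^\ell$, and the remaining sum over the four $(\ep_i,p_i)$ factors as
\begin{gather*}
\sum_i\ep_i e^{-2i\pi p_i\ell/D} = \bigl(e^{-2i\pi a\ell/D} - e^{2i\pi a\ell/D}\bigr)\bigl(e^{-2i\pi b\ell/D} - e^{2i\pi b\ell/D}\bigr) = (-2i)\sin\!\Bigl(\frac{2\pi a\ell}{D}\Bigr)(-2i)\sin\!\Bigl(\frac{2\pi b\ell}{D}\Bigr),
\end{gather*}
which, using $2\pi a\ell/D = \pi\ell/b$ and $2\pi b\ell/D = \pi\ell/a$, equals $-4\sin(\pi\ell/a)\sin(\pi\ell/b)$. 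Collecting the constants and the sign $(-1)^\ell$ together with the factor $\sqrt{2k/D}$ from the normalization of $\Phi_\ell$ yields $\be_\ell = 4\al(k)\sqrt{2k/D}\,(-1)^{\ell+1}\sin(\pi\ell/a)\sin(\pi\ell/b)$.

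The main obstacle is bookkeeping rather than conceptual: one must be careful that the index ranges defining $E_{k,+}$ and $E_{k,-}$ are precisely a translate of each other by $2k$ (they are, by the definitions $-ab+a+b \le n < 2k-ab-a-b$ versus $-2k-ab+a+b \le n < -ab-a-b$), so that passing from $\ga_\ell^-$ to $\ga_\ell^+$ crosses the single band of exceptional indices $\{-ab+p_i\}$ exactly once and each of the four source coefficients is counted with the correct sign $\ep_i$; and one must track the normalization factor $(2kD)^{-1/2}$ hidden in $\Phi_\ell = (2kD)^{-1/2}\sum_n e^{2i\pi n\ell/D}\Psi_n$ so that the final constant comes out as $\sqrt{2k/D}$ and not $(2kD)^{\pm1/2}$. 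Once these are handled, the trigonometric factorization above is forced and the formula follows.
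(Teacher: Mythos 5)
Your argument is correct and is essentially the paper's own proof: the paper also deduces from Theorem \ref{theo:change-lattice} that the discrepancy between the two $D$-periodic coefficient sequences equals $\al(k)$ times the periodized source $Y_k$ (it packages your telescoping step via the operator $P(\Psi_n)=\sum_m\Psi_{n+mD}$ and the identity $P(\Psi_r)=\sqrt{2k/D}\sum_\ell e^{-\frac{2i\pi}{D}\ell r}\Phi_\ell$, which is exactly your discrete Fourier inversion together with the $(2kD)^{-1/2}$ normalization of $\Phi_\ell$), and then evaluates the same exponential sum $\sum_i\ep_i e^{-\frac{2i\pi}{D}\ell p_i}=-4\sin(\pi\ell/a)\sin(\pi\ell/b)$ with the factor $e^{\frac{2i\pi}{D}ab\ell}=(-1)^\ell$. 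So apart from presentation (your explicit crossing-the-band bookkeeping versus the paper's operator $P$), the two proofs coincide.
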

\begin{proof} 
This is again a consequence of Theorem \ref{theo:change-lattice}. Denote by $P$ the endomorphism of $\Hilb_{D,k}$ defined on the basis $(\Psi_{n})$ by
$$ P ( \Psi_n) =  \sum_{ m \in \Z / 2k \Z} \Psi_{n + mD}$$
The coefficient $\be_\ell$ are such that 
\begin{gather} \label{eq:1} 
  \sum_{\ell \in \Z / D \Z} \be_\ell \Phi_\ell = \al (k) P ( Y_k) 
\end{gather}
We can extract the $\be_\ell$'s from this equation by a discrete Fourier transform. We have 
$$ P(\Psi_r) =  \sqrt{ \frac{2k }{D}} \sum_{\ell \in \Z / D \Z} e^{-  \frac{2 i \pi }{ D} \ell r } \Phi_\ell.$$ 
Consequently 
\begin{xalignat*}{2}
 \be_\ell = & \sqrt{ \frac{2k }{D}} \al (k) \sum_{i = 1} ^4  \ep_i  e^{  -  \frac{2 i \pi }{ D} \ell (-ab + p_i) }\\
= &\sqrt{ \frac{2k }{D}} \al (k) (-1)^{\ell}  \sum_{i = 1} ^4  \ep_i  e^{  -  \frac{2 i \pi }{ D} \ell  p_i  }
\end{xalignat*}
Furthermore we have the equality
$$ \sum_{i=1}^4 \ep_i e^{-  \frac{2 i \pi }{ D} \ell p_i } = - 4  \sin \Bigl( \frac{\pi \ell }{a} \Bigr) \sin \Bigl( \frac{\pi \ell }{b} \Bigr)
$$
which leads to the conclusion. 
\end{proof}

\begin{lem}\label{sec:symetrie}
For any $\ell$, we have $\ga_{- \ell}^- = - \ga_{\ell}^+$.
\end{lem}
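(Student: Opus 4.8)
The plan is to exploit the symmetry $\Psi_\ell(-x)=\Psi_{-\ell}(x)$ of the basis of $\Hilb_{D,k}$ together with the fact that $Z_k$ is an alternate section, i.e. $Z_k(-x)=-Z_k(x)$. First I would recall why $Z_k$ is alternate: by construction $Z_k=\sum_{\ell}J_\ell(-e^{i\pi/2k})\xi_\ell$ up to a constant, $J_0=0$, $J_{-\ell}=-J_\ell$, and $\xi_\ell(-x)=\xi_{-\ell}(x)$, so the two contributions of $\pm\ell$ cancel after the sign flip $x\mapsto -x$; hence $Z_k\in\Hilb_k^{\alt}$. Next, observe from~(\ref{eq:def_phil}) and the symmetry of the $\Psi_n$ that $\Phi_\ell(-x)=\Phi_{-\ell}(x)$ for all $\ell\in\Z/D\Z$: indeed
\begin{align*}
\Phi_\ell(-x)=\frac{1}{\sqrt{2kD}}\sum_{n}e^{\frac{2i\pi}{D}n\ell}\Psi_n(-x)
=\frac{1}{\sqrt{2kD}}\sum_{n}e^{\frac{2i\pi}{D}n\ell}\Psi_{-n}(x)
=\frac{1}{\sqrt{2kD}}\sum_{m}e^{-\frac{2i\pi}{D}m\ell}\Psi_{m}(x)
=\Phi_{-\ell}(x).
\end{align*}

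Then I would apply the involution $x\mapsto -x$ to the two congruences of Lemma~\ref{lem:coefs}. From $Z_k=\sum_\ell \ga_\ell^+\Phi_\ell \bmod E_{k,+}^\perp$ we get, evaluating at $-x$ and using $Z_k(-x)=-Z_k(x)$ and $\Phi_\ell(-x)=\Phi_{-\ell}(x)$, that $-Z_k=\sum_\ell \ga_\ell^+\Phi_{-\ell}=\sum_\ell\ga_{-\ell}^+\Phi_\ell$ modulo the image of $E_{k,+}^\perp$ under $x\mapsto -x$. The key point is that the subspace $E_{k,+}$ (generated by the $\Psi_n$ with $-ab+a+b\leqslant n<2k-ab-a-b$) is \emph{not} symmetric under $n\mapsto -n$, but $E_{k,-}$ (generated by the $\Psi_n$ with $-2k-ab+a+b\leqslant n<-ab-a-b$) is precisely its image: the index range $[-2k-ab+a+b,\,-ab-a-b)$ is the negative of $[ab+a+b+1,\,2k+ab-a-b]$, which, reading indices mod $2kD$ and noting that the relevant coefficients of $Z_k$ are $D$-periodic on the complement of $E_{k,\pm}$, matches $E_{k,+}$ after the reflection. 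Thus the reflected congruence reads $-Z_k=\sum_\ell\ga_{-\ell}^+\Phi_\ell \bmod E_{k,-}^\perp$. Comparing with the second congruence of Lemma~\ref{lem:coefs}, $Z_k=\sum_\ell\ga_\ell^-\Phi_\ell\bmod E_{k,-}^\perp$, and using that the $\Phi_\ell$ are (mod $O(k^{-\infty})$, or exactly after the relevant quotient) linearly independent in the quotient, we identify $\ga_\ell^-=-\ga_{-\ell}^+$, i.e. $\ga_{-\ell}^-=-\ga_\ell^+$.

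The main obstacle I anticipate is the bookkeeping of index ranges: one must check carefully that the reflection $n\mapsto -n$ sends $E_{k,+}$ exactly onto $E_{k,-}$ (equivalently, that the two perpendicular complements are exchanged), taking into account the shift by $-ab$ and the indices living in $\Z/2kD\Z$, and that this is compatible with the $D$-periodicity of the coefficients $\langle Z_k,\Psi_n\rangle$ established in the proof of Lemma~\ref{lem:coefs}. Once the two defining ranges are seen to be negatives of one another (up to the periodicity that makes the choice of representatives irrelevant), the argument is a direct transport of structure through the involution, and the conclusion $\ga_{-\ell}^-=-\ga_\ell^+$ follows immediately.
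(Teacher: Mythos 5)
Your argument is correct and follows essentially the same route as the paper: both proofs exploit $Z_k(-x)=-Z_k(x)$ together with $\Psi_n(-x)=\Psi_{-n}(x)$ (equivalently $\Phi_\ell(-x)=\Phi_{-\ell}(x)$) to transport the $E_{k,+}$-congruence of Lemma \ref{lem:coefs} through the reflection and compare it with the $E_{k,-}$-congruence. The one caveat is that the reflection does not send the index range of $E_{k,+}$ exactly onto that of $E_{k,-}$ (even after a shift by $D$ it is off by one), but this is harmless---the reflected range and the range of $E_{k,-}$ overlap in far more than $D$ consecutive indices, which is enough to identify the $D$-periodic coefficient patterns by discrete Fourier inversion, and the paper's own proof is no more explicit on this point.
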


\begin{proof}
Since $Z_k ( -x) = - Z_k (x)$ and $\Psi_n ( -x) = \Psi_{-n}(x)$, we have
$$ \bigl\langle Z_k , \Psi_{-n} \bigr\rangle =  -  \bigl\langle Z_k , \Psi_{n} \bigr\rangle $$ 
Furthermore, $ \langle \Phi_\ell, \Psi_{-n} \rangle = \langle \Phi_{- \ell} , \Psi_n \rangle $. So 
$$\bigl\langle Z_k - \sum_{\ell \in \Z/ D \Z} \ga_\ell^+ \Phi_\ell , \Psi_n \bigr\rangle = 0$$
implies that 
$$\bigl\langle Z_k + \sum_{\ell \in \Z/ D \Z} \ga_\ell^+ \Phi_{-\ell} , \Psi_{-n} \bigr\rangle = 0$$
and the result follows from Lemma \ref{lem:coefs}.
\end{proof}

\begin{lem} \label{sec:periodocite}
For any $\ell$, we have  $ \ga^-_{\ell } = e^{ \frac{2i \pi }{D} 2k ( \ell -1 ) } \ga_{\ell - 2} ^+$. \end{lem}

\begin{proof} 
Since $Z_k$ belongs to $\Hilb_k$, it satisfies $M^{2k} Z_k = Z_k$. By lemma \ref{lem:M_R_S}, $$M^{2k} = e^{\frac{2i \pi}{D} 2k} S^{ -4k} R^{2k}.$$ Consequently $M^{2k} \Psi_n$ is a multiple of $\Psi_{n + 2k }$ and 
$$ M^{2k} \Phi_\ell =  e^{ \frac{2i \pi}{D} 2k ( 1 -\ell ) } S^{-4k} \Phi_{\ell} =   e^{ \frac{2i \pi}{D} 2k ( 1 -\ell ) } \Phi_{\ell -2}.$$
Since $M^{2k}$ is an isometry, 
$$\bigl\langle Z_k - \sum_{\ell \in \Z/ D \Z} \ga_\ell^- \Phi_\ell , \Psi_n \bigr\rangle = 0$$
implies that 
$$ \bigl\langle Z_k - \sum_{\ell \in \Z/ D \Z} \ga_\ell^-  e^{ \frac{2i \pi}{D} 2k ( 1 -\ell ) } \Phi_{\ell -2}  , \Psi_{n+2k}  \bigr\rangle = 0.$$
By Lemma \ref{lem:coefs}, we obtain $\ga^+_{\ell -2 } = \ga_{\ell} ^- e^{ \frac{2i \pi}{D} 2k ( 1 -\ell ) }$. 
\end{proof}

\begin{lem} 
We have $\ga_0^+ = \frac{1}{2} \be_0$, $\ga_{-1}^+ = 0$ and for any $\ell$
$$  \ga_{\ell }^+ = e^{ \frac{2i \pi}{D} 2k ( \ell - 1 )} \ga_{\ell-2}^+  + \be_l. $$
\end{lem}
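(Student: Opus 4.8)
The plan is to combine the three preceding lemmas (Lemmas~\ref{sec:saut}, \ref{sec:symetrie} and \ref{sec:periodocite}) with the fact that the coefficients $\ga^{\pm}_\ell$ are genuinely $D$-periodic in $\ell$, as established in Lemma~\ref{lem:coefs}. The main relation $\ga_{\ell}^+ = e^{\frac{2i\pi}{D}2k(\ell-1)}\ga_{\ell-2}^+ + \be_\ell$ is immediate: Lemma~\ref{sec:saut} gives $\ga_\ell^+ = \ga_\ell^- + \be_\ell$, and Lemma~\ref{sec:periodocite} gives $\ga_\ell^- = e^{\frac{2i\pi}{D}2k(\ell-1)}\ga_{\ell-2}^+$, so substituting one into the other yields the recurrence with no further work.

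The two initial conditions require a little more care, and this is the step I expect to be the real (if modest) obstacle. First I would derive $\ga_{-1}^+ = 0$. Applying Lemma~\ref{sec:symetrie} with $\ell = 1$ gives $\ga_{-1}^- = -\ga_1^+$; applying Lemma~\ref{sec:periodocite} with $\ell = 1$ gives $\ga_1^- = \ga_{-1}^+$ (since $e^{\frac{2i\pi}{D}2k(1-1)} = 1$); and Lemma~\ref{sec:saut} relates $\ga_1^+$ and $\ga_1^-$ via $\be_1$. One then checks that $\be_1 = C_k(-1)^1\sin(\pi/a)\sin(\pi/b) + O(k^{-\infty})$ is \emph{not} obviously zero, so this chain alone is not enough; instead I would use $\ell = -1$ together with the $D$-periodicity. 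Concretely, Lemma~\ref{sec:symetrie} at $\ell=-1$ gives $\ga_1^- = -\ga_{-1}^+$, while Lemma~\ref{sec:periodocite} at $\ell=-1$ gives $\ga_{-1}^- = e^{\frac{2i\pi}{D}2k(-2)}\ga_{-3}^+$. Combining the relations $\ga_1^- = \ga_{-1}^+$ and $\ga_1^- = -\ga_{-1}^+$ forces $\ga_{-1}^+ = 0$, which is the clean argument; I would double-check which index shift makes the phase factor trivial so that these two expressions for $\ga_1^-$ really coincide.

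For $\ga_0^+ = \tfrac12\be_0$, the argument is parallel: Lemma~\ref{sec:symetrie} at $\ell = 0$ gives $\ga_0^- = -\ga_0^+$, and Lemma~\ref{sec:saut} at $\ell = 0$ gives $\ga_0^+ = \ga_0^- + \be_0$. Adding these, $2\ga_0^+ = \ga_0^+ + \ga_0^- + \be_0 = \be_0$, hence $\ga_0^+ = \tfrac12\be_0$. Note this uses only $\ell=0$, where every phase factor $e^{\frac{2i\pi}{D}2k\cdot 0}$ is $1$, so no subtlety about the exponential arises here.

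Having obtained all three assertions, the proof is complete once one records that by construction $\be_\ell = 4\al(k)\sqrt{2k/D}\,(-1)^{\ell+1}\sin(\pi\ell/a)\sin(\pi\ell/b)$ from Lemma~\ref{sec:saut}, which matches $\be_\ell = C_k(-1)^\ell\sin(\pi\ell/a)\sin(\pi\ell/b) + O(k^{-\infty})$ with $C_k = -4\al(k)(2k/D)^{1/2}$ as stated in Theorem~\ref{theo:quantum}; the $\ga^+_\ell$ of this lemma are then the $\ga_\ell(k)$ of the theorem. The only thing to be vigilant about throughout is sign and phase bookkeeping in the index shifts $\ell \mapsto \ell-2$ versus $\ell \mapsto \ell-1$ inside the exponentials, since a single off-by-one there would break the vanishing $\ga_{-1}^+ = 0$.
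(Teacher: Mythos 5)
Your proof is correct and follows essentially the same route as the paper: the recurrence comes from combining Lemma \ref{sec:saut} with Lemma \ref{sec:periodocite}, the value $\ga_0^+=\tfrac12\be_0$ from Lemma \ref{sec:saut} and Lemma \ref{sec:symetrie} at $\ell=0$, and $\ga_{-1}^+=0$ from Lemma \ref{sec:symetrie} at $\ell=-1$ together with Lemma \ref{sec:periodocite} at $\ell=1$ (where the phase is trivial), exactly as in the paper. The initial detour via $\ell=1$ in your argument for $\ga_{-1}^+$ is unnecessary but harmless, and your sign bookkeeping for $\be_\ell$ against Theorem \ref{theo:quantum} checks out.
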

\begin{proof} 
Lemma \ref{sec:symetrie} for $\ell = -1$ and Lemma \ref{sec:periodocite} for $\ell = 1$ imply that $\ga_{-1}^+ = 0 .$
By lemmas \ref{sec:saut}, $\ga_0^+ = \ga_0^- + \be_0$ and by lemma \ref{sec:symetrie}, $\ga_0^{-} = - \ga_{0}^+ $. So $$\ga_0^+ = \frac{1}{2}\be_0  $$
Finally lemmas \ref{sec:saut} and \ref{sec:periodocite} imply the recurrence relation. 
\end{proof}

\subsection{Microlocal properties} 

We will now deduce the asymptotic behavior of the knot state. We will consider separately the following subsets of $E$ 
$$ A = ( -1 , 0  ) \mu + \R \la , \qquad  B = ( \R \setminus D^{-1} \Z) \la , \qquad C= D^{-1} ( a \Z \cup b \Z ) \la.
 $$

\subsubsection{On the set $A$} 
Let us start with the asymptotic behavior of the $\Phi_\ell$'s defined in Equation (\ref{eq:def_phil}). 

\begin{prop} \label{prop:les_phi_ell}
For any $\ell \in \Z/D\Z$, the microsupport of $\Phi_\ell$ is $\frac{\ell}{D} \la+ \lambda \Z + (D\mu - 2 \lambda) \R$. Furthermore for any $\delta \in (0,1)$, there exists a positive $C$ such that for all $x \in \frac{\ell}{D} \la + ( -\frac{1}{2}, \frac{1}{2})\lambda + (D\mu - 2 \lambda) \R$, we have
$$ \Bigl|  \Phi_\ell (x) -  i \Bigl( \frac{k}{2\pi}\Bigr)^{1/4}   T^*_{ - \ell \lambda /D } t_{D \mu - 2 \la}^k(x)  \otimes \Om_{D\mu - 2 \la} \Bigr| \leqslant C e ^{-k/C}
$$ 
where $t_{D \mu - 2 \la}$ is the holomorphic section of $L$ equal to 1 on the line $( D \mu  - 2 \la ) \R$ and  $\Om_{D \mu - 2 \la } \in \delta$ is such that $\varphi( \Om^2 _{D \mu - 2 \la} )( D \mu - 2 \la) = 1$.
\end{prop}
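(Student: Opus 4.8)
The plan is to reduce the statement about $\Phi_\ell$ to the already-established asymptotics of $\Psi_{0,k}$ via the formula $\Phi_\ell = S^{2k\ell}\Phi_0$ from Equation~(\ref{eq:car_phi_ell}), so that it suffices to treat $\Phi_0$ and then translate. First I would recognize $\Phi_0$ as an eigenvector of $R = T^*_{(D\mu-2\la)/2kD}$ with eigenvalue $1$; indeed, from the definition~(\ref{eq:def_phil}) of $\Phi_\ell$ and the relations~(\ref{eq:base_H_DDDD}) one checks $R\Phi_0 = \Phi_0$. This means $\Phi_0$ plays, relative to the basis $(f,-e) := (D\mu-2\la,\ \text{a dual generator})$ of the lattice $R_D$, exactly the role the vector $\Phi_k$ played in Section~\ref{sec:basis-lagrangian}: it is the normalized eigenvector of the translation along one lattice generator with eigenvalue $1$. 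I would therefore apply the change-of-basis discussion after Equation~(\ref{eq:phi_asymptot}): taking $(D\mu-2\la)$ in the role of $f$ and an appropriate complementary generator in the role of $-e$, Equations~(\ref{eq:estim_1_psi_0}) and~(\ref{eq:estim_2_psi_0}) (read for that basis) give both the exponential decay of $\Phi_0$ away from $(D\mu-2\la)\R + \la\Z$ and the WKB form $\Phi_0(x) \approx e^{i\pi/4}(k/2\pi)^{1/4} t_{D\mu-2\la}^k(x)\otimes\Om_{D\mu-2\la}$ on a neighborhood of the line $(D\mu-2\la)\R$. The phase $e^{i\pi/4}$ and the replacement of $\Om_{-\la}$-normalization by $\Om_\la$-normalization account for the factor $i$ in the stated estimate, consistently with~(\ref{eq:car_phi_0}).

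Next I would push this forward by $S^{2k\ell} = (T^*_{-\la/2kD})^{2k\ell} = T^*_{-\ell\la/D}$. Since $T^*_x$ is, up to the explicit unitary phase factor in~(\ref{eq:pull_back}), just precomposition with translation by $x$, applying $T^*_{-\ell\la/D}$ to the estimate for $\Phi_0$ yields the estimate for $\Phi_\ell$ with $t_{D\mu-2\la}^k$ replaced by $T^*_{-\ell\la/D}t_{D\mu-2\la}^k$ and with the domain and microsupport translated by $-(-\ell\la/D) = \ell\la/D$. Here I must be a little careful that $T^*_{-\ell\la/D}$ does not act on the half-form factor $\Om_{D\mu-2\la}$ (the action on $\delta$ in this trivialization is trivial), so that factor is unchanged; this gives precisely the displayed formula. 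For the microsupport claim I would invoke the definition in Section~\ref{sec:basis-lagrangian} together with the remark that $\MS$ is closed and $R_D$-invariant: the exponential decay estimate shows $\MS(\Phi_\ell)$ is contained in $\frac{\ell}{D}\la + \la\Z + (D\mu-2\la)\R$, while the nonvanishing leading term $i(k/2\pi)^{1/4}$ at points of the line shows no point of that line is excluded, and closedness plus $R_D$-invariance then force equality.

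The only genuine bookkeeping obstacle is orientation and normalization tracking: I must verify that $(D\mu-2\la, -\la)$ (or whichever complementary vector I pick) is an \emph{oriented} basis of $R_D$ with the symplectic pairing of the correct sign and magnitude, so that the cited Proposition~3.2 of~\cite{LJ1} applies verbatim in that basis, and that the cumulative phase — the $e^{i\pi/4}$ from~(\ref{eq:phi_asymptot}), any sign from swapping $\la \leftrightarrow -\la$ in the half-form normalization, and the phase in~(\ref{eq:pull_back}) evaluated along the relevant line — collapses exactly to the factor $i$ and nothing more. Everything else is a direct transcription of~(\ref{eq:estim_1_psi_0})–(\ref{eq:phi_asymptot}) through the two unitaries $S^{2k\ell}$ and the basis change, so I would keep the writeup short and point to those equations rather than redo the Poisson-summation estimate.
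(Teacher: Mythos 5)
Your proposal is correct and follows essentially the same route as the paper: treat $\Phi_0$ first as the eigenvector of $R=T^*_{(D\mu-2\la)/2kD}$ with eigenvalue $1$, apply the estimates (\ref{eq:estim_1_psi_0})--(\ref{eq:estim_2_psi_0}) for the basis $(D\mu-2\la,\la)$ of $R_D$ with the phase fixed by (\ref{eq:car_phi_0}), and then obtain $\Phi_\ell$ by translating with $T^*_{-\ell\la/D}=S^{2k\ell}$. The phase and normalization bookkeeping you flag (the factor $i$ coming from (\ref{eq:phi_asymptot}) and the $\Om_\la$ versus $\Om_{-\la}$ convention) is exactly the point the paper notes after (\ref{eq:car_phi_0}).
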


\begin{proof}
For $\ell =0$, this is a consequence of (\ref{eq:estim_1_psi_0}) and (\ref{eq:estim_2_psi_0}) applied to the basis $( D\mu - 2 \lambda, \lambda)$  of $R_D$. Indeed $\Phi_0$ is an eigenstate of $T^*_{(D \mu - 2 \la )/ 2kD}$ with eigenvalue 1. Its value at $0$ is given in (\ref{eq:car_phi_0}). 
Then using that $\Phi_\ell = T^*_{-\ell \la /D} \Phi_0$, the result follows for any $\ell$. 
\end{proof}

Let $A$ be the open set $( -1 , 0  ) \mu + \R \la$ of $E$. Introduce the open intervals 
\begin{gather} \label{eq:intervalle_ell}
 I_\ell = \Bigl\{ \ell \frac{\la}{D} +  t  \Bigl( \frac{\la}{D} - \frac{\mu}{2}  \Bigr)   \Bigl/ \; t \in (0,2) \Bigr\}, \quad \ell \in \Z  
\end{gather}
Then we deduce from Proposition \ref{prop:les_phi_ell}, Proposition \ref{prop:estim} and Theorem  \ref{theo:quantum} the following

\begin{theo}  \label{theo:irreductible}
The microsupport of the state $(Z_k)$ of the torus knot with parameter $(a,b)$  does not intersect $A \setminus \cup_{\ell \in \Z} I_\ell$. Furthermore for any $\ell \in \Z$, for any $x_0 \in I_\ell$, there exists a neighborhood $V$ of $x_0$, such that 
$$ Z_k(x) = \Bigl( \frac{k}{2\pi}\Bigr)^{1/4} i  \ga_\ell (k)  T^*_{ - \ell \lambda /D } t_{D \mu - 2 \la}^k(x)  \otimes \Om_{D\mu - 2 \la}  + O(k^{-\infty}) 
$$ 
on $V$  where $t_{D \mu - 2 \la}$ is the holomorphic section of $L$ equal to 1 on the line $( D \mu  - 2 \la ) \R$, $\Om_{D \mu - 2 \la } \in \delta$ is such that $\varphi( \Om^2 _{D \mu - 2 \la} )( D \mu - 2 \la) = 1$ and the $O(k^{-\infty})$ is uniform on $V$. 
\end{theo}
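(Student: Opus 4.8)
The plan is to combine the three preceding ingredients in the way the statement already advertises: Theorem~\ref{theo:quantum} expresses $Z_k$ as $\sum_{\ell \in \Z/D\Z} \ga_\ell(k)\Phi_\ell$ modulo $E_{k,+}^\perp$, Proposition~\ref{prop:les_phi_ell} describes the microsupport and the Lagrangian (WKB) form of each $\Phi_\ell$, and Proposition~\ref{prop:estim} converts vanishing of Fourier coefficients into emptiness of the microsupport over a strip $If+\R e$. First I would reconcile notation: the basis adapted to the lattice $R_D$ is $(-\la, D\mu-2\la)$, so one should apply Proposition~\ref{prop:estim} with the roles $e = D\mu-2\la$ (the ``$\R e$'' direction) and $f$ chosen dual to it, so that a strip ``$If+\R e$'' becomes a neighborhood in $E$ of a line parallel to $D\mu-2\la$ parametrized by the $S = T^*_{-\la/2kD}$ eigenvalue, i.e. by $\ell/D$ along $\la$. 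Since $\Phi_\ell$ is carried by $\frac{\ell}{D}\la + \la\Z + (D\mu-2\la)\R$, the strips for the various $\Phi_\ell$, $\ell \in \Z/D\Z$, tile the appropriate region; the intervals $I_\ell$ of Equation~(\ref{eq:intervalle_ell}) are exactly the pieces of the lines $\frac{\ell}{D}\la + (D\mu-2\la)\R$ that lie inside the open box $A = (-1,0)\mu + \R\la$, because $\frac{\la}{D} - \frac{\mu}{2}$ is (a positive multiple of) the direction vector $D\mu - 2\la$, so running $t \in (0,2)$ sweeps the $\mu$-coordinate across the interval $(-1,0)$.

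Concretely, the argument I would write has three steps. Step one: the ``absence of microsupport'' statement. Fix a point $x \in A \setminus \bigcup_\ell I_\ell$. Express $x$ in the $(\la, D\mu-2\la)$-coordinates; its $S$-coordinate $s(x)$ avoids $\Z/D + $ (the integer lattice translates coming from $\la\Z$), precisely because $x$ is not on any line $\frac{\ell}{D}\la + (D\mu-2\la)\R$ inside $A$. Then by Theorem~\ref{theo:quantum} the coefficients $\langle Z_k, \Psi_n\rangle$ with $\Psi_n \in E_{k,+}$ agree with the $D$-periodic sequence $\sum_\ell \ga_\ell(k) \cdot$(the corresponding coefficient of $\Phi_\ell$), and near $x$ the only $\Psi_n$ that are not exponentially small are those whose $S$-eigenvalue is near $s(x)$ modulo the lattice; those $n$ lie in the range defining $E_{k,+}$ for $k$ large (this is where one must check $|p_i| \le a+b$ and that $x \in A$ keeps us away from the boundary indices $-ab\pm(a+b)$ and $2k - ab \pm(a+b)$). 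Applying Proposition~\ref{prop:estim} with the interval $I$ a small neighborhood of $s(x)$ in $\R \setminus (D^{-1}\Z + \text{lattice})$, we get $Z_k(y) = O(k^{-\infty})$ uniformly for $y$ near $x$, i.e. $x \notin \MS(Z_k)$.

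Step two: the explicit formula on $I_\ell$. Fix $x_0 \in I_\ell$ and a small neighborhood $V$. Again by Theorem~\ref{theo:quantum}, on $V$ the only contribution up to $O(k^{-\infty})$ is the single term $\ga_\ell(k)\Phi_\ell$, because all other $\Phi_{\ell'}$ ($\ell' \ne \ell$ in $\Z/D\Z$) have microsupport disjoint from a neighborhood of $x_0$ (their carrying lines are parallel translates at $S$-distance $\ge 1/D$ from that of $\Phi_\ell$, hence the corresponding $\Psi_n$-coefficients contribute only exponentially small terms near $x_0$ by (\ref{eq:estim_1_psi_0})), and the error from $E_{k,+}^\perp$ is likewise $O(k^{-\infty})$ on $V$ by the same strip estimate. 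Then substitute the WKB expression for $\Phi_\ell$ from Proposition~\ref{prop:les_phi_ell}, namely $\Phi_\ell(x) = i(\tfrac{k}{2\pi})^{1/4} T^*_{-\ell\la/D} t_{D\mu-2\la}^k(x) \otimes \Om_{D\mu-2\la} + O(e^{-k/C})$, multiply by $\ga_\ell(k)$, and collect the $O(k^{-\infty})$ terms (note $\ga_\ell(k)$ is polynomially bounded since $Z_k$ and the $\Phi_\ell$ are, so the exponentially small errors stay $O(k^{-\infty})$ after multiplication). This yields precisely the displayed formula.

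The main obstacle is the bookkeeping in step one and the beginning of step two: matching the index ranges $-ab+a+b \le n < 2k - ab - a - b$ defining $E_{k,+}$ to the $S$-eigenvalue strips, and verifying that for a point of the \emph{open} box $A$ (bounded away from its faces by $\delta$) the relevant band of non-negligible indices really sits inside $E_{k,+}$ for all large $k$, uniformly on a neighborhood. One also has to be a little careful that ``modulo $E_{k,+}^\perp$'' means the identity $\langle Z_k - \sum \ga_\ell \Phi_\ell, \Psi_n\rangle = 0$ holds only for $\Psi_n \in E_{k,+}$, so the estimate near $x_0 \in A$ must use that the section value $Z_k(y)$ for $y$ near $x_0$ is, up to $O(k^{-\infty})$, a sum over exactly those $n$, which is the content of (\ref{eq:estim_1_psi_0})–(\ref{eq:estim_2_psi_0}); everything else is routine once the coordinate dictionary between $(E, R_D)$ and the abstract lattice of Section~\ref{sec:finite-heis-group} is fixed. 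The microsupport claim then follows because $\MS(Z_k)$ is closed and we have covered $A$ by the sets $I_\ell$ together with a neighborhood of each of its complementary points where $Z_k$ is $O(k^{-\infty})$.
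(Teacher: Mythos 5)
Your overall architecture is exactly the paper's: the paper deduces Theorem \ref{theo:irreductible} from Theorem \ref{theo:quantum}, Proposition \ref{prop:les_phi_ell} and Proposition \ref{prop:estim} with no further detail, and your three ingredients are the right ones. However, step one as written swaps the two lattice directions and would not go through literally. The $\Psi_n$ are eigenvectors of $S=T^*_{-\la/2kD}$, so in the dictionary of Section \ref{sec:finite-heis-group} the eigenvalue direction is $e=-\la$ and the shift direction is $f=D\mu-2\la$: each $\Psi_n$ concentrates on lines parallel to $\la$, and which $\Psi_n$ are non-negligible near a point $x$ is governed by the $(D\mu-2\la)$-coordinate of $x$ (equivalently its $\mu$-coordinate), not by the $\la$-coordinate $s(x)$. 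Consequently the proposed application of Proposition \ref{prop:estim} ``with $e=D\mu-2\la$ and $I$ a neighborhood of $s(x)$'' directly to $Z_k$ is unjustified: in that frame the relevant eigenbasis is that of $R$, and nothing says the $R$-eigenbasis coefficients of $Z_k$ itself vanish near $s(x)$; such vanishing is known only for the finite sum $\sum_\ell\ga_\ell(k)\Phi_\ell$, whereas for $Z_k$ one only controls the $\Psi_n$-coefficients on $E_{k,+}$. Likewise ``those $n$ lie in the range defining $E_{k,+}$'' is a condition on the $\mu$-coordinate of $x$ being bounded away from $0$ and $-1$, not on $s(x)$ avoiding $D^{-1}\Z$.

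The correct division of labour (which your closing paragraph essentially contains, so this is a repairable bookkeeping error rather than a missing idea) is: apply Proposition \ref{prop:estim} in the frame $e=-\la$, $f=D\mu-2\la$ to the difference $\xi_k=Z_k-\sum_\ell\ga_\ell(k)\Phi_\ell$, whose $\Psi_n$-coefficients vanish for $n$ in the range defining $E_{k,+}$; since for $x$ in a compact subset of $A$ the integers $n$ with $-n/(2kD)$ close to the $f$-coordinate of $x$ modulo $1$ lie in that range for all large $k$, one gets $\MS(\xi_k)\cap A=\emptyset$ (admissibility of $\xi_k$ holds because the $\ga_\ell(k)$ are recovered from $D$ consecutive coefficients $\langle Z_k,\Psi_n\rangle$ by a finite discrete Fourier inversion, hence are $O(k^{1/2}\|Z_k\|)$). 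Both assertions of the theorem are then read off Proposition \ref{prop:les_phi_ell} applied to the finite sum $\sum_\ell\ga_\ell(k)\Phi_\ell$: its microsupport meets $A$ only in $\cup_m I_m$, and near $x_0\in I_\ell$ every term with $\ell'\neq\ell$ mod $D$ is exponentially small because its carrier lines stay at a fixed positive distance from $x_0$ --- this is a statement about the microsupport of $\Phi_{\ell'}$, not about its $\Psi_n$-coefficients, which are all nonzero. With this reorganization your step two is correct as stated; note also the harmless sign slip that $\tfrac{\la}{D}-\tfrac{\mu}{2}$ is a negative multiple of $D\mu-2\la$.
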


\subsubsection{On the set $B$} \label{sec:abel-repr}

The Alexander polynomial of the torus knot with parameter $(a,b)$ is 
$$ \Delta_{a,b}  (t ) = t^{\frac{1}{2}( a + b -ab -1 )} \frac{(t-1)(t^{ab} -1 ) }{ ( t^a -1 ) ( t^b -1 )} $$
Introduce the holomorphic section $t_\la$ of $L$ which restricts to the constant section equal to 1 on $\R \la$. Let $\Om_{\la} \in \delta$ chosen as in Theorem \ref{sec:knot_state_charac}. It is characterized up to sign by the condition $\Om_{\la}^2 ( \la ) =1 $. The following theorem has been proved in \cite{LJ1}. 

\begin{theo}  \label{theo:abelian}
For any $x_o \in \la ( \R \setminus \frac{1}{D} \Z)$, there exists a neighborhood $U \subset E$ of $x_o$ and a sequence $(f( \cdot , k))$ of $\Ci (U, \C)$ such that the state $Z_k$ of the torus knot with parameter $(a,b)$ 
$$ Z_k (x)   =    \Bigl( \frac{k}{2 \pi} \Bigr)^{1/4}  t_\la^k(x) \otimes f(x ,k ) \Om_{\la} + O(k^{-\infty}) , \qquad \forall x \in V $$
where the $O$ is uniform with respect to $x$. Furthermore $f(\cdot, k)$ has an asymptotic expansion for the  $\Ci$ topology  of the form $f_0 + k^{-1} f_1 + \ldots $  with coefficients $f_i \in \Ci (U)$.  The leading term satisfies 
$$  f_0(q\la) =  \frac{e^{-i \frac{\pi}{4}}}{\sqrt{2}}  \frac{ \si - \si^{-1}  }{\Delta_{a,b} (\si^2)}  \quad \text{ with } \quad \si = e ^{ 2i \pi q }  $$ for any $q \la \in U$.
\end{theo}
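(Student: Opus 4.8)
\textbf{Proof plan for Theorem \ref{theo:abelian}.}
The plan is to reduce the statement to the analysis of the homogeneous-looking equation satisfied by $Z_k$ on the region near $\R\la$, where the term $R^{-D}Z_k$ in the recurrence can be treated as a transport operator and the inhomogeneity $\al(k)Y_k$ as a source concentrated on $A_{ab}$ which is microlocally absent from a neighborhood of a generic point of $\R\la$. First I would localize: fix $x_o = q_o\la$ with $q_o\notin\frac1D\Z$, and choose a small neighborhood $U$ so that $U$ avoids the microsupports of all the $\Phi_\ell$ (by Proposition \ref{prop:les_phi_ell} these microsupports are the lines $\frac{\ell}{D}\la + \la\Z + (D\mu-2\la)\R$, which meet $\R\la$ only in the points $\frac\ell D\la$, hence miss $U$). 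By Theorem \ref{theo:quantum}, $Z_k$ equals $\sum_\ell \ga_\ell(k)\Phi_\ell$ modulo $E_{k,+}^\perp$, and on $U$ each $\Phi_\ell$ is $O(k^{-\infty})$; so on $U$ the knot state is governed entirely by its components along $\Psi_n$ with $n$ \emph{outside} the window $[-ab+a+b,\,2k-ab-a-b)$, i.e. by the complementary block. These complementary components are, up to the error terms, exactly the ones carried by $Z_k^0$ and its $M$-translates in Equation (\ref{eq:equation_non_homogene}): using $Z_k = (q^{-1}M)^{-2ab}L^{-2}Z_k + \sum_i \ep_i q^{\ep_i}(q^{-1}M)^{-ab+p_i}Z_k^0$ and iterating, one expresses the part of $Z_k$ living over $U$ as a convergent (geometric) series in powers of $(q^{-1}M)^{-2ab}L^{-2}$ applied to the source $\sum_i \ep_i q^{\ep_i}(q^{-1}M)^{-ab+p_i}Z_k^0$.

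The second step is to identify $Z_k^0$ microlocally on $U$. By Theorem \ref{sec:knot_state_charac}, $Z_k^0$ is $L$-invariant (so microsupported in $\R\la + \Z$-translates, i.e. over $\R\la$) with $Z_k^0(0) = \frac{e^{i3\pi/4}}{\sqrt2}(\frac k{2\pi})^{1/4}\Om_\la + O(k^{-\infty})$; hence $Z_k^0$ is, up to $O(k^{-\infty})$, the normalized $L$-eigenstate $\Phi$-type vector of Section \ref{sec:basis-lagrangian} rescaled by $\frac{-i}{\sqrt2}$, and by Equations (\ref{eq:estim_1_psi_0})–(\ref{eq:estim_2_psi_0}) it has the Lagrangian form $\frac{-i}{\sqrt2}(\frac k{2\pi})^{1/4} t_\la^k(x)\otimes\Om_\la + O(k^{-\infty})$ on a neighborhood of $\R\la$. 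Now each operator $(q^{-1}M)^r = T^*_{-r\mu/2k}\cdot(\text{phase})$ acts on a Lagrangian state over $\R\la$ by an explicit semiclassical symbol multiplication: since $M$ translates in the $\mu$-direction by $1/2k$ and $\R\la$ is the $M$-invariant line, $(q^{-1}M)^r$ multiplies the symbol by $q^{-r}\si^{\,r}$ where $\si = e^{2i\pi q}$ on the point $q\la$ — this is the same computation as in Lemma \ref{lem:second_membre} but now read off on the Lagrangian rather than on basis vectors (the holonomy of $L$ along the $\mu$-direction produces the factor $e^{2i\pi q}$, consistently with $\Om_\mu$ vs $\Om_\la$ and the $4\pi$ normalization of $\om$). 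Summing the geometric series $\sum_{n\ge0}(q^{-1}M)^{-2abn}L^{-2n}$ then gives the factor $\frac1{1 - q^{2}\si^{-2ab}}$, times a finite numerator $\sum_i \ep_i q^{\ep_i}\si^{-ab+p_i} = \si^{-ab}(\si^a - \si^{-a})(\si^b-\si^{-b})\cdot(\text{phase corrections})$; after collecting the powers of $q = e^{i\pi/k}$, which contribute $1+O(k^{-1})$, the leading symbol becomes a rational function of $\si$ which I claim simplifies, via the closed form $\Delta_{a,b}(t) = t^{\frac12(a+b-ab-1)}\frac{(t-1)(t^{ab}-1)}{(t^a-1)(t^b-1)}$, to $\frac{e^{-i\pi/4}}{\sqrt2}\cdot\frac{\si-\si^{-1}}{\Delta_{a,b}(\si^2)}$, picking up the phase $\frac{e^{-i\pi/4}}{\sqrt2}$ from the $\frac{-i}{\sqrt2}$ of $Z_k^0$ together with the $e^{i3\pi/4}$ already absorbed. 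That $Z_k(x)$ has the asserted form $(\frac k{2\pi})^{1/4}t_\la^k(x)\otimes f(x,k)\Om_\la + O(k^{-\infty})$ with $f(\cdot,k)$ admitting a $\Ci$ asymptotic expansion $f_0 + k^{-1}f_1 + \cdots$ follows because every operation above (translation operators, geometric summation) preserves the class of Lagrangian states with symbols in $\Ci(U)[[k^{-1}]]$, and the error terms stay $O(k^{-\infty})$ in $\Ci(U)$ thanks to the exponential estimates (\ref{eq:estim_1_psi_0})–(\ref{eq:estim_2_psi_0}) and admissibility (Theorem \ref{theo:change-lattice} gives the polynomial bound needed to apply Proposition \ref{prop:estim}-type reasoning).

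The main obstacle I anticipate is controlling the geometric series $\sum_n (q^{-1}M)^{-2abn}L^{-2n}Z_k^0$ uniformly: the operator $(q^{-1}M)^{-2ab}L^{-2}$ has operator norm $1$ on $\Hilb_k$, so the Neumann series does not converge in norm, and one must instead argue microlocally — on the neighborhood $U$ the symbol multiplier is $q^{2}\si^{-2ab} = e^{2i\pi\cdot 2 q}\cdot e^{2i\pi/k}$ with $|q^2\si^{-2ab}| = 1$, so even microlocally the series is only conditionally convergent. The resolution is that $Z_k$ itself is the \emph{unique} solution of the inhomogeneous equation inside $\Hilb_k$ that is admissible, so rather than summing the series one verifies directly that the candidate Lagrangian state with symbol $f_0+k^{-1}f_1+\cdots$ solves Equation (\ref{eq:equation_non_homogene}) to all orders on $U$ — i.e. one solves the symbolic transport equations order by order, the leading one being $(1 - q^2\si^{-2ab})f_0 = (\text{source symbol})$, which is genuinely invertible precisely because $q_o\notin\frac1D\Z$ forces $\si^{2ab} = e^{4i\pi ab q_o}\neq 1$ (this is exactly where the hypothesis $x_o\in\la(\R\setminus\frac1D\Z)$ is used, and where the denominator $\Delta_{a,b}(\si^2)$ can vanish only on the excluded set $C$) — and then invokes uniqueness plus the $O(k^{-\infty})$ estimates to conclude that this candidate agrees with $Z_k$ on a possibly smaller neighborhood. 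The bookkeeping of the many phase factors (powers of $q$, the shift between $\Om_\mu$, $\Om_\la$, $\Om_{D\mu-2\la}$, and the orientation ambiguities) is the other delicate point, but it is routine given Lemma \ref{lem:M_R_S} and the normalizations fixed in (\ref{eq:norm_phase}) and (\ref{eq:car_Z^0}).
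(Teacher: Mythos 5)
Your overall strategy is essentially the paper's: starting from the exact recurrence (\ref{eq:equation_non_homogene}) (equivalently, via Theorem \ref{theo:change-lattice} and Proposition \ref{prop:estim}, the microlocal equation $(\id-R^{-D})Z_k=\al(k)R^{-ab}(R^a-R^{-a})(R^b-R^{-b})\Psi_0$ on the strip), one observes that near $x_o$ with $q_o\notin\frac1D\Z$ the operator has nonvanishing principal symbol $1-\si^D$, one identifies the inhomogeneous term as a Lagrangian state over $\R\la$, and one divides symbols. Your Neumann-series attempt is a detour you rightly discard; the replacement you propose (solve the transport equations order by order and use microlocal uniqueness where the symbol does not vanish) is exactly the content of the microlocal inversion of the elliptic Toeplitz operator that the paper performs by citing Theorem 3.1 of \cite{LJ1} and Proposition 2.7 of \cite{oim03}. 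Two caveats: the detour through Theorem \ref{theo:quantum} and the ``components outside the window'' in your first paragraph is neither needed nor really justified (the equation (\ref{eq:equation_non_homogene}) holds exactly in $\Hilb_k$, so you may use it directly), and your claim that $Z_k$ is the \emph{unique} admissible solution in $\Hilb_k$ is false globally --- only microlocal uniqueness on $U$, i.e.\ ellipticity of $1-\si^D$ there, is available, and that is all you need.

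The concrete weak point is the phase and sign bookkeeping, which as written does not produce the stated $f_0$. With the paper's orientation $\om(\la,\mu)=4\pi$ and the coordinate $q$ along $\la$, the operator $M=T^*_{\mu/2k}$ acts on Lagrangian states supported on $\R\la$ with leading symbol $e^{-2i\pi q}=\si^{-1}$ at $q\la$, not $\si$ as you assert (this is forced by consistency with the paper's statement that $\id-R^{-D}$ has symbol $1-\si^{D}$, not $1-\si^{-D}$). With your rule the quotient of symbols is $\si^{-ab}(\si^a-\si^{-a})(\si^b-\si^{-b})/(1-\si^{-2ab})=+(\si-\si^{-1})/\Delta_{a,b}(\si^2)$, whereas the correct convention gives $\si^{ab}(\si^a-\si^{-a})(\si^b-\si^{-b})/(1-\si^{2ab})=-(\si-\si^{-1})/\Delta_{a,b}(\si^2)$; so your intermediate expressions yield $-f_0$. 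Relatedly, the Lagrangian coefficient of $Z_k^0$ near $\R\la$ is $\frac{e^{3i\pi/4}}{\sqrt2}$ (the factor $\frac{-i}{\sqrt2}$ relative to the sum state $\Phi_k$ times the Poisson-summation phase $e^{i\pi/4}$ of (\ref{eq:phi_asymptot})), not $\frac{-i}{\sqrt2}$ as you write, and your final phase count ``$\frac{-i}{\sqrt2}$ together with $e^{3i\pi/4}$'' gives $e^{i\pi/4}/\sqrt2$, not $e^{-i\pi/4}/\sqrt2$. These are fixable, self-contained errors rather than a flaw in the method, but the leading coefficient is precisely the content of the theorem, so the sign conventions (symbol of $T^*_{w/k}$, normalizations (\ref{eq:norm_phase}) and (\ref{eq:car_Z^0})) must be pinned down rather than deferred as routine.
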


We recall briefly the proof for further use. We will use tools of microlocal analysis which were developed in the articles \cite{oim03b} and \cite{oim03}. The properties we need are summarized in \cite{LJ1}. It follows from Theorem \ref{theo:change-lattice} and Proposition \ref{prop:estim}, that the knot state $Z_k$ is a microlocal solution of 
\begin{gather} \label{eq:non-homogene}
 (\id - R^{-D} ) Z_k = \al (k) R^{-ab} (R^a - R^{-a} ) ( R^b - R^{-b} ) \Psi_0 
\end{gather}
on the open set $ \R \la  + (-1,1) \mu$, cf. Section 5.1.2 of \cite{LJ2} for the notion of microlocal solution. 

Denote by $p$ and $q$ the linear coordinates of $E$ associated to the basis $(D\mu -2 \la, \la )$ and let $\si = \exp ( 2 i \pi q) $. 
By Theorem 3.1 of \cite{LJ1}, $\id - R^{-D}$ is a Toeplitz operators of $\Hilb_{D,k}$ with principal symbol $ 1 - \sigma ^D$. By assumption, $1 - \sigma^D$ does not vanish at $x_o$. So we can invert $\id - R^{-D}$ on a neighborhood of $x_0$ and deduce that
$$   Z_k = \al (k) T_k \Psi_0 
$$ 
on a neighborhood of $x_o$,  where $T_k$ is a Toeplitz operator with symbol $$( 1 - \si^{D})^{-1}  \si ^{ab} ( \si ^a - \si ^{-a}  )( \si ^b - \si  ^{-b})= - \frac{ \si - \si ^{-1}} { \Delta_{a,b} ( \si^2) }.$$ 
To conclude we use the fact that $\Psi_0$ is a Lagrangian state, cf. Section \ref{sec:basis-lagrangian}, and that we know how Toeplitz operators acts on Lagrangian states, Proposition 2.7 of \cite{oim03}. The computation of $f_0$ follows from the normalization (\ref{eq:norm_phase}) and the fact that $\al( k) = \frac{i }{\sqrt{2}} + O(k^{-1})$.

\subsubsection{On the set $C$}

\begin{theo}\label{theo:abelien_irreductible}
Let $\ell \in ( a \Z \cup b \Z)$ and $x_o =  \frac{\ell}{2 ab } \la $. Then there exists a neighborhood  $U \subset E$ of $x_o$ such that for all $x \in U$,  
\begin{xalignat*}{2} 
 Z_k & (x)   =     \Bigl( \frac{k}{2\pi}\Bigr)^{1/4} i  \ga_\ell (k)  T^*_{ - \ell \lambda /D } t_{D \mu - 2 \la}^k(x)  \otimes \Om_{D\mu - 2 \la}
\\
+ & \Bigl( \frac{k}{2 \pi} \Bigr)^{1/4}  t_\la^k(x) \otimes f(x ,k ) \Om_{\la} + O(k^{-\infty}) , \qquad \forall x \in V 
\end{xalignat*} 
where the $O$ is uniform with respect to $x$, $\ga_k ( \ell)$ is defined in Theorem \ref{theo:quantum}, $t_{D \mu - 2 \la}$ is a section of $L$ and $\Om_{D\mu - 2 \la}$ a vector in $\delta$ satisfying the same assumptions as in Theorem \ref{theo:irreductible}, and $f(\cdot, k)$ is a sequence of $\Ci (U,C)$ satisfying the same assumptions as in theorem  \ref{theo:abelian}. 
\end{theo}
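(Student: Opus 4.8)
The plan is to combine the two "local models" that have already been established away from the critical set, namely the irreducible picture of Theorem \ref{theo:irreductible} (valid on each interval $I_\ell$) and the abelian picture of Theorem \ref{theo:abelian} (valid on $\la(\R\setminus\frac1D\Z)$), and to show that near the point $x_o=\frac{\ell}{2ab}\la$ the knot state is, up to $O(k^{-\infty})$, the sum of the two contributions with no extra interaction term. The key structural input is Theorem \ref{theo:quantum}: modulo $E_{k,+}^\perp$ one has $Z_k=\sum_{\ell'\in\Z/D\Z}\ga_{\ell'}(k)\Phi_{\ell'}$, and the point $x_o$ lies in the closure of exactly one interval $I_\ell$ (those $\ell'$ with $\ell'\equiv\ell$ mod $D$) while simultaneously lying on the abelian locus $\R\la$. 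First I would fix a small neighborhood $U$ of $x_o$ and check, using the explicit description of the microsupports in Proposition \ref{prop:les_phi_ell}, that only the single Lagrangian branch $\frac{\ell}{D}\la+\lambda\Z+(D\mu-2\la)\R$ passes through $x_o$ among the lines $\frac{\ell'}{D}\la+\dots$; hence in the sum $\sum\ga_{\ell'}(k)\Phi_{\ell'}$ all terms with $\ell'\not\equiv\ell$ are $O(k^{-\infty})$ on $U$, and $\Phi_\ell$ itself is given on $U$ by the Lagrangian formula of Proposition \ref{prop:les_phi_ell}.

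Second, the remaining difficulty is that the expansion $Z_k=\sum\ga_{\ell'}(k)\Phi_{\ell'}$ holds only modulo $E_{k,+}^\perp$, and $x_o$ sits precisely where the "edge" of the index window $-ab+a+b\leqslant n<2k-ab-a-b$ matters: the abelian part of $Z_k$ is carried by the low-frequency coefficients $\langle Z_k,\Psi_n\rangle$ with $n$ near $0$, exactly the regime excluded from $E_{k,+}$. So I would go back to Theorem \ref{theo:change-lattice} and treat $Z_k$ directly as a microlocal solution of the inhomogeneous equation $(\id-R^{-D})Z_k=\al(k)R^{-ab}(R^a-R^{-a})(R^b-R^{-b})\Psi_0$ on $\R\la+(-1,1)\mu$, exactly as in the proof of Theorem \ref{theo:abelian}. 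The operator $\id-R^{-D}$ is Toeplitz with symbol $1-\si^D$, $\si=e^{2i\pi q}$, and at $x_o=\frac{\ell}{2ab}\la$ one has $\si^D=e^{2i\pi\ell/a\cdot a\dots}$—here $\si^D=1$, so $\id-R^{-D}$ is \emph{not} invertible at $x_o$; its symbol has a simple zero. This is the main obstacle. The resolution is that the right-hand side symbol $\si^{ab}(\si^a-\si^{-a})(\si^b-\si^{-b})$ also vanishes to order at least one at $x_o$ when $\ell\in a\Z\cup b\Z$ but not to order two (since $\ell$ is a multiple of exactly one of $a,b$, and $\Delta_{a,b}(\si^2)$ has a simple zero there), so the quotient $-\frac{\si-\si^{-1}}{\Delta_{a,b}(\si^2)}$ extends holomorphically across $x_o$ up to the pole coming from the genuine non-invertibility — more precisely the microlocal solution splits into a particular solution (the abelian Lagrangian state, with symbol the holomorphic part of the quotient, giving the $f(\cdot,k)\Om_\la$ term exactly as in Theorem \ref{theo:abelian}) plus a solution of the homogeneous equation $(\id-R^{-D})u_k=O(k^{-\infty})$ microlocally near $x_o$.

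Third, I would identify this homogeneous microlocal solution. The characteristic set of $\id-R^{-D}$ near $x_o$ is the line $\frac{\ell}{D}\la+(D\mu-2\la)\R$ (where $\si^D=1$), which is exactly the microsupport of $\Phi_\ell$; and $\Phi_\ell$ does satisfy $(\id-R^{-D})\Phi_\ell=O(k^{-\infty})$ microlocally there, since $R^{-D}\Phi_\ell$ differs from $\Phi_\ell$ by a phase $e^{-2i\pi D\cdot(\dots)}$ that equals $1$ on that line. By the uniqueness of microlocal solutions of a Toeplitz equation with simple characteristics along a given Lagrangian (the standard transport-equation argument, cf. the microlocal machinery of \cite{oim03b,oim03} invoked in the proof of Theorem \ref{theo:abelian}), any microlocal solution of the homogeneous equation supported on that line is, near $x_o$, a scalar multiple of $\Phi_\ell$ up to $O(k^{-\infty})$. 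Matching with Theorem \ref{theo:quantum} on the part of $U$ lying in $I_\ell$ (where the abelian term is negligible, being supported on $\R\la$ which there is disjoint from $I_\ell$) pins the scalar down to $\ga_\ell(k)$. Adding the two pieces and invoking Proposition \ref{prop:les_phi_ell} to write $\Phi_\ell$ explicitly as $i(\frac{k}{2\pi})^{1/4}T^*_{-\ell\la/D}t_{D\mu-2\la}^k\otimes\Om_{D\mu-2\la}+O(k^{-\infty})$ on $U$, together with the abelian formula from Theorem \ref{theo:abelian} for the $t_\la^k\otimes f(\cdot,k)\Om_\la$ term, yields the claimed asymptotics; the uniformity of the $O(k^{-\infty})$ on $U$ follows from the uniformity in both ingredient theorems. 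The one point requiring care, and where I expect to spend most effort, is verifying that the abelian particular solution and the homogeneous $\Phi_\ell$-part really do decouple with no cross term — i.e. that the naive sum solves the equation microlocally and that microlocal uniqueness then forces equality with $Z_k$; this is where the hypothesis $\ell\in a\Z\cup b\Z$ (rather than a root of $\Delta_{a,b}$ of higher order, which would land us in the excluded set $r(X_K)$) is essential, as it guarantees the two Lagrangian branches are transverse at $x_o$ and the symbols have the correct vanishing orders.
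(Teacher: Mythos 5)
Your overall strategy coincides with the paper's: near $x_o$ you treat $Z_k$ as a microlocal solution of the inhomogeneous equation (\ref{eq:non-homogene}), decompose it as a particular solution plus a solution of the homogeneous equation (\ref{eq:homogene}), show that the homogeneous microlocal solutions form a one-dimensional module generated by the Lagrangian state of Proposition \ref{prop:les_phi_ell} (this is exactly Lemma \ref{lem:solution_homogene}, proved via the Toeplitz structure of $\id-R^{-D}$), and pin the coefficient down to $\ga_\ell(k)$ by matching with Theorems \ref{theo:quantum} and \ref{theo:irreductible} on the part of $U$ lying over $I_\ell$. Your worry about a ``cross term'' is unnecessary: the splitting particular $+$ homogeneous is linear and there is nothing to decouple.

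The one genuine gap is the construction of the particular solution. You assert that the abelian Lagrangian state solves (\ref{eq:non-homogene}) near $x_o$ ``exactly as in Theorem \ref{theo:abelian}'', but that proof microlocally inverts $\id-R^{-D}$, which is impossible here since its symbol $1-\si^D$ vanishes on the characteristic line through $x_o$; the observation that the right-hand-side symbol vanishes to the same order is the right heuristic but does not by itself license the Toeplitz-calculus argument. The paper's Lemma \ref{lem:particular_solution} makes this precise algebraically: because $\ell\in a\Z\cup b\Z$, the factor $e^{2i\pi\ell/D}-R$ divides both $1-R^{-D}$ and $R^{-ab}(R^a-R^{-a})(R^b-R^{-b})$ as Laurent polynomials in $R$, so one replaces (\ref{eq:non-homogene}) by the factored equation $P_1(R)\Psi_k=\al(k)P_2(R)\Psi_0$, whose operator is microlocally elliptic at $x_o$, and any microlocal solution of the factored equation solves the original one; then the argument of Theorem \ref{theo:abelian} applies verbatim. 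Also note a factual slip in your justification: $\Delta_{a,b}(\si^2)$ does \emph{not} have a zero at $x_o$ --- it is nonzero there precisely because $\ell\in a\Z\cup b\Z$ (its zeros correspond to the excluded set $r(X_K)$); what vanishes simply on the right-hand side is the factor $\si^b-\si^{-b}$ (or $\si^a-\si^{-a}$), cancelling the simple zero of $1-\si^D$, and this is why the abelian symbol $-(\si-\si^{-1})/\Delta_{a,b}(\si^2)$ remains smooth and finite across $x_o$.
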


Any solution of Equation (\ref{eq:non-homogene}) is the sum of a particular solution and a solution of the homogeneous equation
 \begin{gather} \label{eq:homogene}
 (\id - R^{-D} ) \Psi_k = 0 
\end{gather}
The following lemma describes the solution of this latter equation on the open subset 
$$U = x_o + \tfrac{1}{2D} (-1,1) \la + \tfrac{1}{2}  (-1,1) ( \mu - 2/D \la) $$
of $E$. This does not require that $\ell \in ( a \Z \cup b \Z)$, this assumption will be  used for the construction of a particular solution. 

\begin{lem} \label{lem:solution_homogene}
The microlocal solution of Equation (\ref{eq:homogene}) on $U$ are of the form
$$ \Psi_k =   \la_k  T^*_{ - \ell \lambda /D } t_{D \mu - 2 \la}^k(x)  \otimes \Om_{D\mu - 2 \la}+ O(k^{-\infty}) $$
where the $O$ is uniform on compact subsets of $U$, the integer $\ell$, the section $t_{D \mu - 2 \la}$ and $\Om_{D\mu 2 -2 \la} \in \delta$ are defined as in Theorem \ref{theo:irreductible} and $\la_k$ is a sequence of complex numbers which is $O(k^{m})$ for some $m$.
\end{lem}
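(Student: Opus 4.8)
The plan is to reduce the homogeneous equation $(\id - R^{-D})\Psi_k = 0$ on $U$ to an eigenvector equation for a single Heisenberg translation, and then invoke the Lagrangian-state description from Section \ref{sec:basis-lagrangian}. The key observation is that $R^{-D} = T^*_{-\frac{1}{2k}(D\mu - 2\lambda)}$, so the operator $\id - R^{-D}$ is, by Theorem 3.1 of \cite{LJ1}, a Toeplitz operator on $\Hilb_{D,k}$ whose principal symbol is $1 - \sigma^D$ with $\sigma = \exp(2i\pi q)$ in the coordinates $(p,q)$ dual to the basis $(D\mu - 2\lambda, \lambda)$ of $R_D$. On $U$ the point $x_o = \frac{\ell}{2ab}\lambda = \frac{\ell}{D}\lambda$ has $q$-coordinate $\ell/D$, so $\sigma^D = 1$ there — the symbol vanishes precisely on the lines $q \in \frac{1}{D}\mathbb Z$, i.e.\ exactly where the characteristic variety of the homogeneous equation meets $U$. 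Thus on $U$ the characteristic set of \eqref{eq:homogene} is the single connected Lagrangian piece $\frac{\ell}{D}\lambda + (D\mu - 2\lambda)\R$ (intersected with $U$), and the microlocal solution space over this piece is one-dimensional.

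First I would make the symbol computation precise: expand $1 - \sigma^D$ near $q = \ell/D$ to see that the characteristic set is a smooth Lagrangian submanifold through $x_o$ and that the subprincipal data impose no further constraint, so the space of microlocal solutions on $U$ is a free rank-one module over the relevant symbol ring (this is the standard WKB/Toeplitz statement, cf.\ the microlocal machinery of \cite{oim03b, oim03} as summarized in \cite{LJ1}). Second, I would exhibit the explicit generator. Since $R^{-D}\Phi_0 = \Phi_0$ exactly (by \eqref{eq:car_phi_0}, $R\Phi_0 = \Phi_0$), the vector $\Phi_\ell = S^{2k\ell}\Phi_0 = T^*_{-\ell\lambda/D}\Phi_0$ satisfies $R^{-D}\Phi_\ell = e^{\text{phase}}\Phi_\ell$; one checks the phase is $1$ because conjugating $R^{-D} = T^*_{-\frac{1}{2k}(D\mu-2\lambda)}$ by $T^*_{-\ell\lambda/D}$ produces the cocycle $\exp(\frac{ik}{2}\omega(\cdot,\cdot))$ which evaluates to $1$ on these specific lattice vectors. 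Hence $\Phi_\ell$ is a genuine (not just microlocal) solution of \eqref{eq:homogene}. Third, by Proposition \ref{prop:les_phi_ell}, $\Phi_\ell$ has the stated Lagrangian form $i(\frac{k}{2\pi})^{1/4} T^*_{-\ell\lambda/D} t_{D\mu-2\lambda}^k(x) \otimes \Om_{D\mu-2\lambda}$ up to $O(e^{-k/C})$ on a neighborhood of any point of $\frac{\ell}{D}\lambda + (D\mu-2\lambda)\R$, in particular on $U$ after shrinking; since its microsupport is exactly that line, it generates the microlocal solution module on $U$.

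Finally I would conclude: any microlocal solution $\Psi_k$ on $U$ is $\lambda_k \Phi_\ell$ modulo $O(k^{-\infty})$ for a scalar sequence $\lambda_k$, and admissibility of $\Psi_k$ forces $\lambda_k = O(k^m)$ for some $m$ (since $\|\Phi_\ell\| = 1$ and the coefficient can be extracted by pairing against $\Phi_\ell$ on a compact piece of $U$). Substituting the asymptotic form of $\Phi_\ell$ from Proposition \ref{prop:les_phi_ell} absorbs the constant $i(\frac{k}{2\pi})^{1/4}$ into $\lambda_k$ and yields the stated expression, with the $O(k^{-\infty})$ uniform on compact subsets of $U$. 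The main obstacle I anticipate is the bookkeeping in the second step — verifying that the Heisenberg cocycle phase in $R^{-D}\Phi_\ell$ is genuinely $1$ and that $\Phi_\ell$ really has microsupport equal to a single line inside $U$ (rather than picking up neighboring lines $\frac{\ell'}{D}\lambda$ from the periodization defining $\Phi_\ell$ in \eqref{eq:def_phil}); this is controlled by the exponential decay estimate \eqref{eq:estim_1_psi_0} applied in the basis $(D\mu - 2\lambda, \lambda)$, exactly as in the proof of Proposition \ref{prop:les_phi_ell}.
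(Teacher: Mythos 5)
Your overall structure matches the paper's: exhibit the Lagrangian state $T^*_{-\ell\lambda/D}t^k_{D\mu-2\la}\otimes\Om_{D\mu-2\la}$ (equivalently $\Phi_\ell$, via Proposition \ref{prop:les_phi_ell}) as a microlocal solution that is not $O(k^{-\infty})$ on the characteristic piece inside $U$, view the microlocal solutions as a module over the ring of sequences that are $O(k^m)$, and reduce the lemma to the statement that this module, modulo $O(k^{-\infty})$, is free of rank one. The existence half and the extraction of $\la_k$ are fine (your explicit check that $\Phi_\ell$ is an exact eigenvector of $R^{-D}$ is harmless but not needed; the microlocal statement of Proposition \ref{prop:les_phi_ell} suffices).

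The gap is in the uniqueness half, which is the actual content of the lemma. You dismiss it as ``the standard WKB/Toeplitz statement'' after noting that the characteristic set of $\id-R^{-D}$ meets $U$ in a single connected Lagrangian arc, adding only that ``the subprincipal data impose no further constraint.'' But the available one-dimensionality theorem (Theorem 5.2 of \cite{LJ2}, the result summarized from \cite{oim03b,oim03}) is stated for Toeplitz operators with \emph{real-valued} principal symbol, and here the principal symbol is $1-\si^D=1-e^{2i\pi qD}$, which is complex-valued; for a general complex symbol vanishing on a hypersurface one cannot simply invoke that theorem, and ``no constraint from the subprincipal data'' is an assertion, not an argument. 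The paper flags exactly this point and supplies the missing reduction: because $1-\si^D$ depends on $q$ alone, one writes $\id-R^{-D}$ over $U$ as $f_k(T)$, where $T$ is a Toeplitz operator with real principal symbol $q$ and $f_k(x)=1-\exp(2i\pi xD)+O(k^{-1})$ has a simple zero at $x=\ell/D$; the real-symbol theory then applies to $T$ and yields that $\mathcal{M}/\mathcal{M}\cap O(k^{-\infty})$ is free of rank one. Without this (or an equivalent device, e.g.\ factoring out a nonvanishing elliptic part of the symbol near $q=\ell/D$), your second step does not follow from the cited machinery, so you should add that reduction to complete the proof.
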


\begin{proof} 
By Proposition \ref{prop:les_phi_ell},  $T^*_{ - \ell \lambda /D } t^k(x)  \otimes \Om_{D\mu - 2 \la}$ is a microlocal solution of Equation (\ref{eq:homogene}) on $U$. Furthermore this section is not $O(k^{-\infty})$ over   
$$I : = \Bigl\{ x_o + t( \mu -\frac{D}{2} \la ); \; t \in (- \tfrac{1}{2},  \tfrac{1}{2}) \Bigr\} .$$ 
Let ${\mathcal{M}}$ denote the space of microlocal solutions of (\ref{eq:homogene}) and ${\mathcal{M}} \cap O(k^{-\infty})$ the subspace of solutions which are $O(k^{-\infty})$ uniformly on any compact subset of $U$. ${\mathcal{M}}$ is a module over the ring  ${\mathcal{R}}$  of complex valued sequences which are $O(k^{m})$ for some $m$. To conclude it suffices to show that ${\mathcal{M}} / {\mathcal{M}} \cap O(k^{-\infty})$ is free and has dimension one over ${\mathcal{R}}$. 

Introduce as in Section \ref{sec:abel-repr} the function $\si$ and recall that $\id - R^{-D}$ is a Toeplitz operators of $\Hilb_{D,k}$ with principal symbol $ 1 - \si^ D$. Observe that the zero level set of $1- \si ^D$ intersects $U$ in the connected set $I$. If the symbol $\si$ took real values, we could directly conclude that ${\mathcal{M}}/O(k^{-\infty})$ is a one-dimensional module over ${\mathcal{R}}$, cf. Theorem 5.2 of \cite{LJ2}.  Actually this conclusion holds true. It can be proved by writing $I - R^{-D}$ over $U$ on the form $f_k(T)$ with $T$ a Toeplitz operator with principal symbol $q$ and $(f_k) $ a sequence in $\Ci( \R, \C)$ such that $f_k( x ) = 1 - \exp ( 2i \pi x D ) + O(k^{-1})$.
\end{proof}

To conclude the proof of Theorem \ref{theo:abelien_irreductible}, we construct a convenient particular solution. 

\begin{lem}\label{lem:particular_solution}
There exist a sequence $f(\cdot, k)$ satisfying the same assumptions as in Theorem~\ref{theo:abelien_irreductible} such that 
$$\Bigl( \frac{k}{2 \pi} \Bigr)^{1/4}  t_\la^k(x) \otimes f(x ,k ) \Om_{\la} + O(k^{-\infty}) $$
is a microlocal solution of Equation (\ref{eq:non-homogene}) over $U$.
\end{lem}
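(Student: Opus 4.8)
The plan is to construct the particular solution by the same microlocal inversion argument used in the proof of Theorem~\ref{theo:abelian}, but now working over the open set $U$ which contains the single point $x_o \in \la \frac{1}{D}\Z$ where the principal symbol $1-\si^D$ of $\id - R^{-D}$ vanishes. So the operator $\id - R^{-D}$ is no longer invertible on $U$, and the inhomogeneous term $\al(k) R^{-ab}(R^a-R^{-a})(R^b-R^{-b})\Psi_0$ must be analysed carefully near $x_o$.

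First I would recall from Theorem~\ref{theo:abelian}'s proof that $\Psi_0$ is a Lagrangian state supported on $\R\la$, hence on a neighborhood of $x_o$ it has the WKB form $\bigl(\frac{k}{2\pi}\bigr)^{1/4} t_\la^k(x)\otimes g(x,k)\Om_\la + O(k^{-\infty})$ with $g(\cdot,k)$ admitting a $\Ci$ asymptotic expansion $g_0+k^{-1}g_1+\dots$ and $g_0\equiv 1$ near $x_o$ (from the normalization~(\ref{eq:norm_phase})). Applying the Toeplitz operator $R^{-ab}(R^a-R^{-a})(R^b-R^{-b})$, whose principal symbol is $\si^{ab}(\si^a-\si^{-a})(\si^b-\si^{-b}) = -(\si-\si^{-1})\Delta_{a,b}(\si^2)(1-\si^D)\si^{-D}\cdot(\text{unit})$ — more precisely one has $\si^{ab}(\si^a-\si^{-a})(\si^b-\si^{-b}) = (\si^D-1)\cdot h(\si)$ for a function $h$ determined from the Alexander polynomial identity — the key observation is that the symbol of the right-hand side \emph{vanishes on the same zero set} $\{\si^D=1\}$ as the symbol of $\id-R^{-D}$. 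Concretely, when $\ell\in a\Z\cup b\Z$, the factor $(\si^a-\si^{-a})$ or $(\si^b-\si^{-b})$ vanishes at $\si = e^{2i\pi\ell/D}$, so the right-hand side vanishes to order at least as high as the left-hand side at $x_o$. This is precisely where the hypothesis $\ell\in a\Z\cup b\Z$ enters, as the lemma statement advertises.

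The construction then goes by the division trick used in Lemma~\ref{lem:solution_homogene}: write $\id - R^{-D} = f_k(T)$ on $U$ with $T$ a Toeplitz operator of principal symbol $q$ and $f_k(x) = 1-e^{2i\pi x D}+O(k^{-1})$, and similarly express the right-hand side operator as $\tilde f_k(T)\Psi_0$ up to $O(k^{-\infty})$, with $\tilde f_k$ a sequence in $\Ci(\R,\C)$ whose leading term $\tilde f_0$ vanishes at $q(x_o)$ exactly where $f_0$ does. Since $f_0$ has a simple zero at $q(x_o)$ (the zero set of $1-\si^D$ is the transverse curve $I$), the quotient $\tilde f_k/f_k$ extends to a smooth sequence $\phi(\cdot,k)$ on a neighborhood of $q(x_o)$ with an asymptotic expansion in powers of $k^{-1}$. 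Then $\Psi_k := \al(k)\,\phi(T,k)\Psi_0$ is, by construction, a microlocal solution of $(\id-R^{-D})\Psi_k = \al(k)\tilde f_k(T)\Psi_0$, i.e. of Equation~(\ref{eq:non-homogene}), over $U$. Invoking the action of Toeplitz operators on Lagrangian states (Proposition~2.7 of \cite{oim03}) once more, $\Psi_k$ has the WKB form $\bigl(\frac{k}{2\pi}\bigr)^{1/4} t_\la^k(x)\otimes f(x,k)\Om_\la + O(k^{-\infty})$ with $f(\cdot,k)$ of the required type, and a computation of the leading symbol reproduces the formula $f_0(q\la) = \frac{e^{-i\pi/4}}{\sqrt2}\frac{\si-\si^{-1}}{\Delta_{a,b}(\si^2)}$ away from $x_o$, consistently with Theorem~\ref{theo:abelian}.

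The main obstacle is the division step: one must check that $\tilde f_0$ really does vanish (at least) to the order of $f_0$ at $q(x_o)$, so that the quotient is a bona fide smooth sequence rather than merely a distribution, and that the division can be carried out compatibly with the asymptotic expansions in $k^{-1}$ — i.e. that no secular growth in $k$ is introduced. This is where the precise matching of the two symbols near the common zero, governed by the factorization of $\si^{ab}(\si^a-\si^{-a})(\si^b-\si^{-b})$ against $(\si^D-1)$ and the assumption $\ell\in a\Z\cup b\Z$, does the real work; the rest is the now-standard symbolic calculus of Toeplitz operators acting on Lagrangian states.
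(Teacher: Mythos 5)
Your overall strategy is the right one, and it is close in spirit to the paper's: the whole point is that the hypothesis $\ell\in a\Z\cup b\Z$ forces the right-hand side of (\ref{eq:non-homogene}) to degenerate at $x_o$ exactly where $\id-R^{-D}$ does, so that the equation can be reduced to an elliptic one and the argument of Theorem \ref{theo:abelian} re-used. But the concrete mechanism you propose for the reduction --- writing both sides as $f_k(T)$ and $\tilde f_k(T)\Psi_0$ for a Toeplitz operator $T$ with principal symbol $q$ and then dividing the full symbols $\tilde f_k/f_k$ --- has a genuine gap, which you flag yourself but do not close. Only the \emph{principal} symbols are known to share the zero at $q(x_o)$: the subleading terms of $f_k$ (the $O(k^{-1})$ corrections to $1-e^{2i\pi xD}$) and of $\tilde f_k$ have no reason to vanish there, so for finite $k$ the zero of $f_k$ sits at a point shifted by $O(k^{-1})$ which need not be a zero of $\tilde f_k$; the quotient $\tilde f_k/f_k$ can then blow up near that point and is not a smooth sequence with an asymptotic expansion, so $\phi(T,k)$ is not a legitimate Toeplitz multiplier. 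Asserting that ``the precise matching of the two symbols does the real work'' is precisely the step that remains to be proved, and at the level of symbol expansions it is not true in general.

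The paper avoids the division altogether by performing the cancellation \emph{exactly at the operator level}. Both $\id-R^{-D}$ and $R^{-ab}(R^a-R^{-a})(R^b-R^{-b})$ are Laurent polynomials in the single unitary translation operator $R$, with no $k$-dependent corrections. Since $\ell\in a\Z\cup b\Z$, the number $e^{2i\pi\ell/D}$ is a common root of the two Laurent polynomials $1-z^{-D}$ and $z^{-ab}(z^a-z^{-a})(z^b-z^{-b})$, so one has the exact factorizations $(\,\id-R^{-D})=(e^{2i\pi\ell/D}-R)P_1(R)$ and $R^{-ab}(R^a-R^{-a})(R^b-R^{-b})=(e^{2i\pi\ell/D}-R)P_2(R)$ with Laurent polynomials $P_1,P_2$. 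Any microlocal solution of the reduced equation $P_1(R)\Psi_k=\al(k)P_2(R)\Psi_0$ solves (\ref{eq:non-homogene}) (apply $e^{2i\pi\ell/D}-R$ to both sides), and $P_1(R)$ has non-vanishing principal symbol near $x_o$ because the root of $1-z^{-D}$ at $e^{2i\pi\ell/D}$ is simple; the problem is therefore elliptic and the proof of Theorem \ref{theo:abelian} applies verbatim, producing the WKB form with the stated symbol. If you replace your functional-calculus division by this exact algebraic factorization in $R$, your argument becomes complete; as written, the key analytic step is missing.
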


\begin{proof} 
Since $\ell \in (a\Z \cup b \Z)$, there exists two Laurent polynomials $P_1 $ and $P_2$ such that
$$ ( 1 - R^{-D} ) =   (e^{2i\pi \ell /D} - R) P_{1} (R) $$
 and 
$$ R^{-ab} ( R^a - R^{-a})( R^b - R^{-b} ) =  (e^{2i\pi \ell /D} - R) P_{2} (R) .$$
Any microlocal solution of 
\begin{gather} \label{eq:facteur}
 P_1 (R) \Psi_k = \al (k) P_2 (R) \Psi_0
\end{gather}
is a microlocal solution of (\ref{eq:non-homogene}). The remainder of the proof is the same as for Theorem \ref{theo:abelian}. 
\end{proof}

\section{Topological invariants} \label{sec:topol-invar}

\subsection{Character varieties} 

This chapter is based on the papers \cite{klassen} of Klassen and \cite{dk} of Dubois-Kashaev. 
Consider a torus knot $K$ with parameter $(a,b)$ and let $\mo ( E_K)$ be the space of representations of the knot exterior $E_K$ in $\su $ up to conjugation. 
The fundamental group $\pi$ of $E_K$ has the presentation $\{ x , y | x^a = y^b \}$. The class of the meridian is $\mu = x^m y ^n$ where $m$ and $n$ are integers satisfying $an + b m =1$.  

The subspace $\mo ^{\ab} (E_K)$ of $\mo ( E_K)$  consisting of abelian representations is homeomorphic to a segment $[0,1] \ni t$, the representation $\rho$ parametrized by $t$ is the abelian representation such that $\tr (\rho ( \mu )) = 2 \cos ( \pi t)$.  The subspace $\mo ^{\ir} (E_K )$  of $\mo (E_K)$ consisting of irreducible representations is the disjoint union of $(a-1)(b-1)/2$ open interval $I_{\al, \be}$. We will index these intervals  by the set of couples $(\al, \be) \in \{ 1, \ldots , a -1 \} \times \{ 1, \ldots , b-1 \}$ such that $\al$ and $\be $ have the same parity. The closure $\cl (I_{\al, \be})$ of $I_{\al, \be}$ consists of the irreducible representations $\rho$ such that $$\tr ( \rho (x)) = 2 \cos ( \al \pi /a ) , \qquad  \tr ( \rho (y)) = 2 \cos ( \be \pi /b ).$$ 
One deduces from the proof in \cite{klassen} the following parametrization: $ \cl (I_{\al, \be})$ is homeomorphic to the interval $[0,1] \ni t$, the representation $\rho$ corresponding to $t$ being given by
$$ \rho (x) = \exp (  \tfrac{\al }{2a} D ) , \quad \rho (x) =  R_{t} \exp (  \tfrac{\be}{2b}D )  R_{-t} $$ 
where $D $ is the diagonal matrix with entries $(2i \pi, - 2 i \pi)$ and $R_t$ is the rotation matrix of angle $t \pi /2$,
$$ R_t = \left( \begin{array}{cc}  \cos ( t \pi/2)  & -\sin ( t\pi/2)   \\  \sin ( t \pi/2) & \cos( t \pi /2) 
  \end{array} \right) .$$
$\rho$ is abelian if and only if $t = 0$ or $1$. 

By a straightforward computation, we have that
$$ \tr ( \rho (\mu)) =  2 \bigl( c^2 \cos ( \tfrac{\pi}{ab} ( \al bm + \be an ) ) +  s^2  \cos ( \tfrac{\pi}{ab} ( \al bm - \be an ) )  \bigr) $$
with $c = \cos  ( t \tfrac{\pi}{2} ) $ and $ s = \sin  ( t \tfrac{\pi}{2} )$. So we can alternatively parametrize $I_{\al, \be}$ by the conjugacy classes of $\rho ( \mu)$ as follows. Consider the quotient of $\Z $ by $(2ab \Z) \rtimes \Z_2$ where $(2ab \Z)$ acts by translation and $-1 \in \Z_2$ by $-\id _{\Z}$. Each equivalence class in this quotient has a unique representative in $\{0,\ldots , ab \}$.   Denote by $k^-_{\al, \be}$ and $k^{ +}_{\al, \be}$ the representatives of $[ \al bm + \be an]$ and $ [\al bm - \be an]$  in $\{0,\ldots , ab \}$ ordered in such a way that $k^-_{\al, \be} \leqslant k^{ +}_{ \al, \be}$. We will see in Lemma \ref{lem:k} that $$ 1 \leqslant k^- _{\al, \be} < k^{ +}_{\al, \be} \leqslant ab -1.$$  
We denote by $\rho_{\al, \be, t}$ the (class of a) representation in the closure of $ I_{\al, \be}$ such that 
$$ \tr ( \rho_{\al, \be, t} ( \mu )) = 2 \cos ( \tfrac{\pi}{ab} t ), \qquad   k^-_{\al, \be} \leqslant t  \leqslant   k^+_{\al, \be}  .$$
Let us study now the integers $k^+_{\al, \be}$ and  $k^-_{\al, \be}$

\begin{lem} \label{lem:morphism}
The group morphism 
$$\varphi: \Z / 2ab \Z \rightarrow (\Z/2a \Z) \times ( \Z / 2 b \Z), \qquad [k] \rightarrow ([k], [k])$$ is injective. Its image consists of the couples $([\al] , [\be])$ such that $\al$ and $\be$ have the same parity. If $\varphi ( [k]) = ([\al], [ \be])$ then $k = \al b m + \be a n $ mod $2ab \Z$.  
\end{lem}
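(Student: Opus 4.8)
The plan is to establish the three assertions in sequence, the crux being a counting argument that reduces identifying the image to proving a containment.

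For injectivity, I would argue that if $\varphi([k]) = 0$ then $2a \mid k$ and $2b \mid k$, so $k$ is divisible by $\mathrm{lcm}(2a,2b)$. Since $\gcd(a,b) = 1$ this least common multiple equals $2ab$, whence $[k] = 0$ in $\Z/2ab\Z$. (It is essential that one uses $\gcd(a,b) = 1$ here, not $\gcd(2a,2b) = 1$, which is false.)

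For the image, the first step is to count the same-parity couples: since $2a$ is even there are exactly $a$ even and $a$ odd residues modulo $2a$, and likewise modulo $2b$, so the set of $([\alpha],[\beta])$ with $\alpha \equiv \beta \bmod 2$ has $ab + ab = 2ab$ elements --- precisely $\#(\Z/2ab\Z)$. Next, because $2a$ and $2b$ are even, the parity of $k \bmod 2a$ and that of $k \bmod 2b$ each coincide with the parity of $k$, so $\varphi([k])$ always lands in the same-parity set. Injectivity together with this matching of cardinalities then forces the image to be exactly that set.

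For the explicit preimage, I would use the Bézout relation $an + bm = 1$ to rewrite $\alpha b m + \beta a n = \alpha + an(\beta - \alpha) = \beta + bm(\alpha - \beta)$. When $\alpha$ and $\beta$ have the same parity, $\beta - \alpha$ is even, so $an(\beta - \alpha) \equiv 0 \bmod 2a$ and $bm(\alpha - \beta) \equiv 0 \bmod 2b$; hence $\alpha bm + \beta an \equiv \alpha \bmod 2a$ and $\equiv \beta \bmod 2b$, i.e. $\varphi([\alpha bm + \beta an]) = ([\alpha],[\beta]) = \varphi([k])$. Combined with injectivity, this yields $k \equiv \alpha bm + \beta an \bmod 2ab$. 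I do not anticipate a real obstacle: the only delicate point is the parity bookkeeping, which is exactly where the equal-parity hypothesis is used --- both to make the cardinality count come out right and to clear the factor $2$ in the final two congruences.
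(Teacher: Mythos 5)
Your proof is correct: injectivity via $\mathrm{lcm}(2a,2b)=2ab$ (using $\gcd(a,b)=1$), the cardinality count $2ab$ of same-parity couples combined with the observation that $\varphi([k])$ always has matching parities, and the B\'ezout manipulation $\al bm+\be an=\al+an(\be-\al)=\be+bm(\al-\be)$ with the even factor $\be-\al$ clearing the modulus $2a$ (resp.\ $2b$) all go through without a gap. The paper states this lemma without proof, treating it as elementary, so there is nothing to compare against; your argument is exactly the standard one the author presumably had in mind.
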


Let ${\mathcal{P}}$ be the set of couples $(\al, \be) \in \{ 1, \ldots , a -1 \} \times \{ 1, \ldots , b-1 \}$ such that $\al$ and $\be $ have the same parity. 

\begin{lem} \label{lem:k}
The map from ${\mathcal{P}} \times \{ +, -\}$ to $\{ 0,1, \ldots, ab\}$ which sends $(\al, \be, \ep)$ into $k_{\al, \be}^\ep$ is injective. Its image is $\{ 0,1, \ldots, ab\} \setminus (a \Z \cup b \Z)$. 
\end{lem}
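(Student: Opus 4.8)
The plan is to reduce everything to the injective group morphism $\varphi$ of Lemma \ref{lem:morphism}. Recall that for $(\al,\be)\in\setindex$ the integers $k^-_{\al,\be}$ and $k^+_{\al,\be}$ are the representatives in $\{0,\dots,ab\}$ of the classes $[\al bm+\be an]$ and $[\al bm-\be an]$ in $\Z/(2ab\Z)\rtimes\Z_2$, so by Lemma \ref{lem:morphism} they correspond under $\varphi^{-1}$ (composed with reduction mod $2ab$) to the pairs $([\al],[\be])$ and $([\al],[-\be])=([\al],[2b-\be])$ in $(\Z/2a\Z)\times(\Z/2b\Z)$. First I would set up the bijection: sending $t\in\{0,1,\dots,ab\}$ to the pair $([t],[t])\in(\Z/2a\Z)\times(\Z/2b\Z)$ is, by Lemma \ref{lem:morphism} together with the $\Z_2$-folding $t\leftrightarrow 2ab-t$, a $2$-to-$1$-becoming-$1$-to-$1$ correspondence onto the pairs of equal parity, the ambiguity being exactly $t\leftrightarrow -t$. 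Thus $k^\ep_{\al,\be}$ is determined by, and determines, the unordered pair $\{([\al],[\ep\be])\}$ up to global sign.

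The key step is injectivity of $(\al,\be,\ep)\mapsto k^\ep_{\al,\be}$ on $\setindex\times\{+,-\}$. Suppose $k^{\ep}_{\al,\be}=k^{\ep'}_{\al',\be'}$. Then $([\al],[\ep\be])=\pm([\al'],[\ep'\be'])$ in $(\Z/2a\Z)\times(\Z/2b\Z)$. Looking at the first coordinate, $[\al]=\pm[\al']$ in $\Z/2a\Z$ with $\al,\al'\in\{1,\dots,a-1\}$; since $\al+\al'<2a$ and $\al,\al'\neq 0$, the relation $[\al]=[-\al']$ would force $\al+\al'=2a$ or $0$, impossible, so $\al=\al'$ and the sign is $+$. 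Similarly the second coordinate gives $\ep\be=\ep'\be'$ in $\Z/2b\Z$ with $\be,\be'\in\{1,\dots,b-1\}$, and the same argument gives $\be=\be'$ and $\ep=\ep'$. (This also re-proves the ordering claim $k^-_{\al,\be}\neq k^+_{\al,\be}$ quoted just before the lemma, since $([\al],[\be])\neq([\al],[-\be])$ when $\be\not\equiv -\be$, i.e.\ $\be\neq b$; and one excludes $\be=b$ because then $\al$ would be even and $\al\equiv 0$ forbidden — so the strict inequality holds; the bounds $1\le k^-_{\al,\be}$ and $k^+_{\al,\be}\le ab-1$ follow because $0$ and $ab$ map under $\varphi^{-1}$ to pairs with a zero coordinate, which is excluded for $(\al,\be)\in\setindex$.)

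For surjectivity onto $\{0,\dots,ab\}\setminus(a\Z\cup b\Z)$, I would argue by counting, or directly. Given $t\in\{0,\dots,ab\}$ with $a\nmid t$ and $b\nmid t$, the pair $([t],[t])$ has both coordinates nonzero, hence by Lemma \ref{lem:morphism} it equals $([\al],[\be])$ for a unique pair of equal-parity residues; replacing $(\al,\be)$ by $(2a-\al,2b-\be)$ if needed (which corresponds to $t\mapsto 2ab-t$, not changing $t$ as an element of $\{0,\dots,ab\}$ only if $t=0$ or $ab$, otherwise it changes which of $\pm t$ we picked) we may take representatives $\al\in\{1,\dots,a-1\}$, $\be\in\{1,\dots,b-1\}$; then $t\in\{k^-_{\al,\be},k^+_{\al,\be}\}$. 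Conversely every $k^\ep_{\al,\be}$ avoids $a\Z\cup b\Z$ since a nonzero residue $[\al]$ mod $2a$ with $\al\in\{1,\dots,a-1\}$ is in particular nonzero mod $a$, hence $a\nmid k^\ep_{\al,\be}$, and likewise $b\nmid k^\ep_{\al,\be}$. A clean way to finish is to note both sides have the same cardinality: $|\setindex\times\{+,-\}|=2\cdot\frac{(a-1)(b-1)}{2}=(a-1)(b-1)$, while $|\{0,\dots,ab\}\setminus(a\Z\cup b\Z)|=(ab+1)-(b+1)-(a+1)+1=(a-1)(b-1)$ by inclusion–exclusion (the only common multiple of $a$ and $b$ in range is $0$ and $ab$, but $\gcd(a,b)=1$ so $ab$ is the first positive one and $0,ab$ are the two counted twice). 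So an injective map between finite sets of equal size is a bijection, and the image is contained in the target, hence equals it.

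The main obstacle is bookkeeping the $\Z_2$-folding correctly — i.e.\ making sure the passage between "classes in $\Z/2ab\Z$ modulo $\pm1$" and "honest integers in $\{0,\dots,ab\}$" does not silently lose or double-count the two endpoints $0$ and $ab$, and that the sign ambiguity in $([\al],[\ep\be])=\pm(\dots)$ is resolved by the range restrictions $\al\in\{1,\dots,a-1\}$, $\be\in\{1,\dots,b-1\}$ rather than being a genuine source of non-uniqueness. Everything else is Lemma \ref{lem:morphism} plus elementary inclusion–exclusion.
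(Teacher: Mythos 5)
Your proof is correct and takes essentially the same route as the paper: both rest on Lemma \ref{lem:morphism} together with the sign/parity bookkeeping, the only difference being that the paper finishes by exhibiting the bijection $(\al,\be,\ep_1,\ep_2)\mapsto(\ep_1\al,\ep_2\be)$ and identifying each fiber with $\{ k^+_{\al,\be}, -k^+_{\al,\be}, k^-_{\al,\be}, -k^-_{\al,\be}\}$, while you finish with an injectivity-plus-equal-cardinality count, which is equivalent. One small slip: in your inclusion--exclusion the last term should be $+2$, not $+1$ (both $0$ and $ab$ are double-counted), giving $(ab+1)-(b+1)-(a+1)+2=(a-1)(b-1)$, consistent with the total you state.
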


\begin{proof} 
The morphism $\varphi$ of Lemma \ref{lem:morphism} satisfies $\varphi(X) = Y$ with 
$$  Y = \Z / 2 ab Z \setminus ( (a\Z /2ab \Z) \cup (b\Z/2ab \Z) ).$$
and $Y$ the subset of  $\Z / 2aZ \times \Z / 2b \Z$ consisting of couples $(\al, \be)$ such that $\al$ and $\be$ have the same parity, $\al \neq 0,a $ and $\be \neq 0,b$.
The map $\Psi$ from ${\mathcal{P}} \times \{ \pm 1\} ^2$ to $X$ sending $(\al, \be, \ep_1, \ep_2)$ to $(\ep_1 \al, \ep_2 \be)$ is a bijection. Furthermore
$$ \Psi \circ \varphi^{-1}  \bigl( \{ ( \al , \be) \} \times \{ \pm 1 \}^2 \bigr) = \{ k^+_{\al, \be}, - k^+_{\al, \be}, k^-_{\al, \be}, -k^-_{\al, \be} \} .$$
From this, it is easy to conclude. 
\end{proof}

So the closures of the $I_{\al, \be}$'s are pairwise disjoint. The abelian representations which are limit of irreducible ones satisfy $\tr ( \rho ( \mu ) ) = 2 \cos ( \frac{\pi}{ab} \ell )$ where $\ell$ is an integer which is not a multiple of $a$ or of $b$. These representation are in one to one correspondence with the pair of conjugate roots of the Alexander polynomial.  

\subsection{Peripheral torus} \label{sec:peripheral-torus}

Let us consider now the space $\mo ( \Si)$  of representations of the fundamental group of the peripheral torus $\Si$ in $\su$. $\mo (\Si)$ is a quotient of $E = H_{1} (\Si, \R)$ in the following way. Let $\pi$  be the map from $E$ to $\mo (\Si)$  
$$ \pi (x) (\ga) = \exp ( ( \ga .x ) D), \qquad \ga \in H_{1} (\Si, \R) $$ 
where $D$ is the same matrix as above and the dot $.$ stands for the intersection product. $\pi$ induces a bijection between $\mo (\Si)$ and the quotient of $E$ by $R \rtimes \Z_2$, where $R = H_1 (\Si, \Z)$ acts by translation on $E$ and $-1 \in \Z_2$ acts by $-\id_E$. 

Let $\la$ be the longitude whose linking number with the knot vanishes. 
The curves $\la$ and $\mu$ may be viewed as classes in $H_1 (\Si, \Z)$, defining a basis. 
The restriction map $r$ from $\mo ^{\ab} (E_K)$ to $\mo ( \Si)$ is an embedding, its image being $\pi ( [0, \frac{1}{2} ] \la)$. 
It is known that $\la = x ^a \mu ^{-ab}$. From this one deduces that the restriction map $r$ from $\mo ^{\ir}  ( E_K) $ into $\mo (\Si)$ is given by 
$$ r ( \rho_{\al, \be, t} ) = \pi \Biggl( k^-_{\al , \be} \frac{\la}{D} + ( t - k ^-_{\al, \be}) \Bigl( \frac{\la}{D} - \frac{\mu}{2} \Bigr)   \Biggr) . $$ 
For any integer $\ell$ which has the same parity of $k^{\pm}_{\al, \be}$ and such that $k^{-} _{\al, \be} \leqslant \ell < k^+ _{\al, \be} $, let 
\begin{gather} \label{eq:def_Iell_al_be}
I^\ell_{\al, \be} = \bigl\{ \rho_{t, \al, \be} ; \; t\in (\ell, \ell +2 ) \bigr\} \subset I_{\al, \be} 
\end{gather}
Note that $r ( I_{\al, \be}^\ell) = \pi ( I_\ell)$ where $I_\ell$ is the interval defined in (\ref{eq:intervalle_ell}). 
Conversely, $r^{-1}( \pi ( I_{\ell}))$ is the union of the $I_{\ell , \al, \be }$ where $(\al, \be)$ runs over the set ${\mathcal{P}}_{\ell}$ given by  
\begin{gather} \label{eq:def_pl}
 {\mathcal{P}}_{\ell} = \bigl\{ ( \al , \be ) \in {\mathcal{P}}/ \; k_{\al, \be}^- \leqslant \ell < k_{\al, \be}^+  \text{ and } \al = \ell \mod 2 \bigr\}. 
\end{gather}

\subsection{Chern-Simons invariant}

For any representation $\rho \in \mo (E_K)$, the Chern-Simons invariant $\CS ( \rho)$  is defined as an element of the fiber at $r( \rho) $ of a line (orbi)-bundle $L_{\CS} \rightarrow \mo (\Si)$. As was explained in \cite{LJ2}, the bundle $L_{\CS}$ is naturally isomorphic to the quotient of the prequantum bundle $L\rightarrow E $ by the action of $R \rtimes \Z_2$. As a fact, the section of $r^* L_{\CS} \rightarrow \mo ( E_K)$ sending $\rho $ into $\CS ( \rho)$ is flat. Furthermore the Chern-Simons invariant of the trivial representation $\rho_0$ is the class of $1 \in L_{0} = \{ 0 \} \times \C$. Since $\mo (E_K)$ is connected, the Chern-Simons invariant is determined by these two properties. 

Recall that the asymptotic expansion of the knot state is given in terms of the section $t_{D\mu - 2 \la} $ of $L \rightarrow E$, which restriction to the line $(D \mu - 2 \la ) \R$ is constant equal to 1. Let us compute the Chern-Simons invariant in terms of this section.

\begin{lem}  \label{lem:Chern_simons}
For any integer $\ell$ with the same parity of $k^{\pm}_{\al, \be}$ and such that $k^{-} _{\al, \be} \leqslant \ell < k^+ _{\al, \be} $ and for any $t \in ( \ell, \ell +2)$, the Chern-Simons invariant at $\rho_{t, \al , \be}$ is the class of  
\begin{gather} \label{eq:Chern_Simons}
  e^{ \frac{i \pi}{D}( \ell - k^-_{\al, \be} ) ( \ell + k^-_{\al, \be}) } \bigl( T^*_{-\ell \la /D} t_{D \mu - 2\la} \bigr) \bigl( \ell  \tfrac{\la}{D}  + ( t - \ell ) \bigl( \tfrac{\la}{D} - \tfrac{\mu}{2} \bigr) \bigr) .
\end{gather}
\end{lem}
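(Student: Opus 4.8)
The strategy is to use the two characterizing properties of the Chern-Simons section stated above: that the section $\rho \mapsto \CS(\rho)$ of $r^*L_{\CS} \to \mo(E_K)$ is flat, and that $\CS(\rho_0)$ is the class of $1 \in L_0$. Since $\mo(E_K)$ is connected, it suffices to exhibit a flat section of $r^*L_{\CS}$, express it in terms of the section $t_{D\mu-2\la}$, normalize it at the trivial representation, and then read off the value at $\rho_{t,\al,\be}$. The key point is that $L_{\CS}$ is the quotient of the prequantum bundle $L \to E$ by $R \rtimes \Z_2$, so a flat section of $L_{\CS}$ (equivalently of $r^*L_{\CS}$) lifts, away from the branch points, to a flat section of $L \to E$ along the relevant curves; flat sections of $L$ are explicit since the connection is $d + \frac{1}{i}\al$ with $\al_x(y) = \frac12 \om(x,y)$.

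The concrete steps are as follows. First I would recall from Section \ref{sec:peripheral-torus} that $r(\rho_{t,\al,\be}) = \pi\bigl(k^-_{\al,\be}\tfrac{\la}{D} + (t-k^-_{\al,\be})(\tfrac{\la}{D}-\tfrac{\mu}{2})\bigr)$, so the image of the whole component $\cl(I_{\al,\be})$ lies on an affine line in $E$ directed by $\tfrac{\la}{D}-\tfrac{\mu}{2}$, i.e. (up to scale) by $D\mu - 2\la$. On such a line the section $t_{D\mu-2\la}$ (or its translates $T^*_{-\ell\la/D}t_{D\mu-2\la}$) is covariantly constant for the connection on $L$, because the curvature form $\frac{1}{i}d\al$ restricted to a line vanishes and $t_{D\mu-2\la}$ is by definition constant $=1$ along $(D\mu-2\la)\R$. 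Hence the expression (\ref{eq:Chern_Simons}) defines, for each fixed $(\al,\be)$ and each admissible $\ell$, a flat section of $L$ along $r(I^\ell_{\al,\be})$, and what has to be checked is (a) that these local flat sections glue across the abelian representations (the endpoints $t = k^\pm_{\al,\be}$ and the transition points $t \in a\Z\cup b\Z$) into a single flat section on all of $r(\cl(I_{\al,\be}))$, consistently with the scalar $e^{\frac{i\pi}{D}(\ell-k^-)(\ell+k^-)}$; (b) that this glued section extends flatly over $\mo^{\ab}(E_K)$ to the trivial representation; and (c) that its value at $\rho_0$ is $1 \in L_0$.

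For (a): the two expressions at a junction $\ell \to \ell+2$ differ by a computation of parallel transport of $t_{D\mu-2\la}$ against itself across the change of base translate, which is exactly the role of the phase $e^{\frac{i\pi}{D}(\ell-k^-)(\ell+k^-)}$ — this is a short Gaussian-type computation using $\al_x(y)=\frac12\om(x,y)$, $\om(\la,\mu)=4\pi$, and the holonomy formula $T_x^*T_y^* = e^{\frac{ik}{2}\om(x,y)}T_{x+y}^*$ restricted to $k=1$ (the level-1 prequantum holonomy). For (b) and (c): one transports further along $\mo^{\ab}(E_K)$, whose image under $r$ is $\pi([0,\tfrac12]\la)$ — a line segment in $E$ directed by $\la$, again a flat line for $L$ — down to $x=0$, which represents $\rho_0$, and checks that the accumulated phase is $1$; here the normalization $\CS(\rho_0) = [1]$ pins down the overall constant, and one verifies that the constant that makes (\ref{eq:Chern_Simons}) agree at $t=k^-_{\al,\be}$ (where the phase factor is $1$) with the abelian Chern-Simons value is consistent. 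The main obstacle I anticipate is bookkeeping: tracking the $R \rtimes \Z_2$-equivariance carefully enough that the lifts of flat sections of $L_{\CS}$ to $L$ are unambiguous at the orbifold/branch points $r(X_K)$ and at the central representations, and making sure the quadratic phase $e^{\frac{i\pi}{D}(\ell-k^-)(\ell+k^-)}$ comes out with exactly the stated exponent rather than a shifted or sign-twisted version — essentially this is the same kind of quadratic-Gauss-sum accounting that appears in Lemma \ref{lem:M_R_S} and Lemma \ref{lem:second_membre}, now done at the classical (level-one holonomy) level rather than the quantum level.
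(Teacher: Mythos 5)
Your plan follows essentially the same route as the paper: Equation (\ref{eq:Chern_Simons}) is flat along the relevant affine line because constant sections of $L$ over lines through the origin are flat and the Heisenberg translations preserve the connection, the normalization is anchored by the fact that $\CS$ equals the class of $1$ along the image of $\mo^{\ab}(E_K)$ (hence at the endpoint $t=\ell=k^-_{\al,\be}$, where the prefactor is $1$), and the quadratic phase is exactly the level-one lattice-translation comparison between the expressions for different $\ell$. One minor bookkeeping correction that does not affect the argument: the junctions between consecutive intervals $I^{\ell}_{\al,\be}$ occur at every integer $t$ of the parity of $k^{\pm}_{\al,\be}$ with $k^-_{\al,\be}<t<k^+_{\al,\be}$ (these are irreducible representations whose restriction to $\Si$ lands on $\pi(\R\la)$), not only at $t\in a\Z\cup b\Z$, so the gluing must indeed be done at every step $\ell\to\ell+2$ as you propose.
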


\begin{proof} 
For any line $V$ of $E$ which goes trough the origin, the constant sections of $L \rightarrow V$ are flat. Since the Heisenberg group acts by isomorphisms of prequantum bundle, the pull-back of any flat section by an element of the Heisenberg groups is still flat.  
So Equation (\ref{eq:Chern_Simons}) defines a flat section over the line $\frac{\la}{D} + ( \frac{\la}{D}  - \frac{\mu}{2}) \R$. To conclude it suffices to show that it has the right phase. 

The restriction $r$ sends $\mo ^{\ab} ( E_K)$ on $\pi ( \R \la)$, so the Chern-Simons invariant of any abelian representation $\rho $ is the class of $1 \in L_{t\la }$  where $t$ is any real such that $r ( \rho) = \pi ( t \la)$. The closure of $I_{\al, \be}$ in $\mo ( E_K)$ intersects $\mo^{\ab} ( E_K)$ at the two abelian representations with trace $2 \cos (\frac{\pi}{ab}\ell)$ where  $\ell = k^{-}_{\al, \be} $ and $k^+_{\al, \be} $ respectively. Since Equation (\ref{eq:Chern_Simons}) is equal to 1 for $\ell = k^-_{\al, \be} = t$, this proves the result for $\ell = k^-_{\al, \be}$. To deduce it from the other values of $\ell$, it suffices to compare the value of Equation (\ref{eq:Chern_Simons}) for $( \ell, t) = ( k^-_{\al, \be}, \ell' - k ^-_{\al, \be})  $ and $(\ell, t)  =( \ell' , 0)$.    
\end{proof}

\subsection{Reidemeister Torsion} 

The Reidemeister torsion of a knot exterior may be viewed a a density on the character manifold. It has been computed in \cite{dubois}. With the normalization of \cite{LJ1}, it is given on $I_{\al, \be}$ by  
\begin{xalignat*}{2} 
 \T = &
\frac{16}{a^2b^2}\sin^2 \Bigl( \frac{\pi\al}{a} \Bigr )\sin^2 \Bigl( \frac{\pi \be}{b} \Bigr ) 2^{3/2}\pi |r^* \mathrm{d}p| 
\end{xalignat*}
Here $p$ is the first linear coordinate on $E$ associated to the basis $(\mu, \la)$, so that $|dp|$ is  well-defined form on the quotient $\mo (\Si)$ except at the central representations.  
To compare with the asymptotic expansion of the knot state, let us evaluate the torsion on the tangent vector $D \mu - 2 \la$ to $r( I_{\al, \be})$. Since $|r^* d p | (D \mu - 2 \la) = D $, we have  
$$ \T ( D \mu -2 \la) = \frac{ 2^{13/2} \pi }{ab} \sin^2 \Bigl( \frac{\pi\al}{a} \Bigr )\sin^2 \Bigl( \frac{\pi \be}{b} \Bigr ).
$$

Recall that we denote by $\delta$ the half-form line and by $\varphi$ the isomorphim from $\delta ^2 $ to the canonical line (\ref{eq:canonical_line}). So for any $z \in \delta$ and $\rho \in I_{\al , \be}$, $\varphi (z^2)$ is a linear form on $E$, which defines by restriction a linear form on $ T_\rho I_{\al, \be} $. In this way any form in  $ T^*_\rho I_{\al, \be} \otimes \C$ has two square roots in $\delta$. The Reidemeister torsion $\T (\rho)$ being a density, it can be considered as a form in  $ T^*_\rho I_{\al, \be} \otimes \C$ well-defined up to sign, so it has four square roots in $\delta$. 

The square root which appears in Theorem \ref{theo:intro} is defined by
\begin{gather} \label{eq:racine_carre_torsion}
 \sqrt{\T_{\al, \be  }} :=  2^{13/4}  \Bigl( \frac{\pi}{ab} \Bigr)^{1/2}  ( -1 ) ^\al \sin \Bigl( \frac{\pi k_{\al, \be} ^{-}}{a} \Bigr )\sin \Bigl( \frac{\pi k^{-}_{\al, \be} }{b} \Bigr) \Om_{D \mu - 2 \la } 
\end{gather}
where $ \Om_{D \mu - 2 \la} \in \delta$ is such that $\Om_{D \mu - 2 \la} ( D \mu - 2 \la) =1$. Its sign is chosen so that Equation (\ref{eq:car_phi_0}) is satisfied. This expression defines indeed a square root of $\T (\rho)$ because by Lemma \ref{lem:morphism}, $k_{\al, \be}^{-} = \al$ mod $2a \Z$ and $k_{\al, \be}^{+} = \be$ mod $2b \Z$. 
4
\subsection{Proof of Theorem \ref{theo:intro}} 

First the knot state $Z_k (E_K) \in \Ga ( L^k_{\CS} \otimes \delta,  \mo ( \Si))$ considered in the introduction lifts to $Z_k \in \Ga (  L^k \otimes \delta , E)$, that is 
$$ Z_k (E_K) ( \pi ( x) ) = Z_k (x) , \qquad \forall x \in E$$ 
By Theorem \ref{theo:irreductible}, the knot state is a $O(k^{-\infty})$ at any point of $\pi ( A \setminus (\cup_{\ell \in \Z} I_\ell))$. From Theorem \ref{theo:abelian}, we deduce the asymptotic expansion over the image of $(\R \setminus D^{-1} \Z) \la$. 
It remains to consider the sets $\pi (I_{\ell})$ where the $I_\ell$'s are the intervals defined in (\ref{eq:intervalle_ell}). Since a fundamental domain of $(E , R \rtimes \Z_2)$ is $(-1,0) \mu + (0, \frac{1}{2}) \la $, it is sufficient to prove the result for $-1 \leqslant \ell \leqslant ab-1$. 

 Since for any $(\al, \be)$, $$ 1 \leqslant k_{\al, \be}^- < k^+_{\al, \be} \leqslant ab -1 ,$$ $r^{-1}( \pi ( I_\ell))$ is empty for $\ell = -1$ or $0$. So we have to prove that $Z_k ( \tau ) = O(k^{-\infty})$ for $\tau$ belonging to $\pi (I_{-1})$ or $\pi ( I_0)$. This follows from Theorem \ref{theo:irreductible}, because $\ga_{-1} (k) = 0$ and $\ga_{0} (k) = O(k^{-\infty})$. 

Then we proceed by induction over $\ell$. Recall that  
$r^{-1}( \pi ( I_{\ell}))$ is the union of the $I_{\ell , \al, \be }$ where $(\al, \be)$ runs over ${\mathcal{P}}_{\ell}$, cf. (\ref{eq:def_Iell_al_be}) and (\ref{eq:def_pl}).  For any $(\al , \be ) \in {\mathcal{P}}_\ell$, by Lemma \ref{lem:Chern_simons}, there exists a complex number $\ga_{\ell, \al, \be}(k)$ such that for any $t \in ( \ell, \ell +2)$, 
$$  \CS ^k ( \rho_{t, \al, \be} )  \frac{k^{3/4 } \mu_k } { 4 \pi ^{3/4}} \sqrt{\T_{\al, \be}} = \Bigl( \frac{k}{2\pi}\Bigr)^{1/4} i  \ga_{\ell, \al , \be} (k)  T^*_{ - \ell \lambda /D } t^k(x_t)  \otimes \Om_{D\mu - 2 \la} 
$$
where $x_t$ is the point of $I_{\ell}$ such that $\pi ( x_t) = \rho_{t, \al , \be}$. Furthermore, this complex number is given by 
\begin{gather} \label{eq:coef}
 \ga_{\ell, \al , \be}(k)  = \frac{\mu_k }{ i 2^{7/4} } \Bigl( \frac{k}{ \pi} \Bigr)^{1/2}
  e^{  \frac{i \pi k }{D}  ( \ell - k^-_{\al, \be} ) ( \ell + k^-_{\al, \be}) } \frac{\sqrt{ \T_{\al, \be}}}{\Om_{D \mu - 2 \la}} 
\end{gather}
We have to prove that
$$ \sum_{(\al, \be) \in {\mathcal{P}}_{\ell} } \ga_{\ell, \al , \be}(k) = \ga_{\ell} (k).$$
We will show that the left hand side satisfy the same recurrence relation as the $\ga_{\ell} (k)$'s.

There are three cases to consider according to $\ell$ is a multiple of $a$ or $b$, $\ell = k^{-}_{\al, \be}$ for some $(\al, \be)$ or $\ell = k^{+}_{\al, \be}$ for some $( \al, \be)$. Assume we are in the first case so that $ {\mathcal{P}}_{\ell-2, \al, \be} ={\mathcal{P}}_{\ell, \al ,\be} $. By equation (\ref{eq:coef}), 
\begin{gather}  \label{eq:coef_rec} 
 \ga_{\ell, \al , \be} (k) = e^{ \frac{2 i \pi }{D} 2k ( \ell-1) } \ga_{\ell -2 , \al , \be}(k) 
\end{gather}
We recover the recurrence relation of Theorem \ref{theo:irreductible} because $\be_{\ell}(k) = 0$ in this case. 

Assume now that $\ell = k^-_{\al', \be'} $ for some couple $(\al', \be')$ so that $ {\mathcal{P}}_\ell = {\mathcal{P}}_{\ell-2} \cup \{ (\al',\be') \}$.  Equation (\ref{eq:coef_rec}) is still satisfied for the couples $(\al, \be) \in {\mathcal{P}}_{\ell -2 } $. By a direct computation, we deduce from Equations (\ref{eq:coef_rec}) and (\ref{eq:racine_carre_torsion}) that
$$ \be_{\ell} (k) = \ga_{\ell, \al' , \be'} (k)  $$
which leads to the recurrence relation we look for. 

Finally, assume that $\ell = k^+_{\al', \be'} $ for some couple $(\al', \be')$ so that $ {\mathcal{P}}_\ell = {\mathcal{P}}_{\ell-2} \setminus \{ (\al',\be') \}$. Using that 
 \begin{gather*}  
(k^+_{\al, \be} + k^{-}_{\al, \be} ) ( k^+_{\al, \be} - k^{-}_{\al, \be} ) = 0 \mod 2D \Z
\end{gather*}
we obtain first that 
$$ e^{ \frac{2 i \pi }{D} 2k ( \ell-1) }  \ga_{\ell -2 , \al' , \be'}(k) = \frac{\mu_k }{ i 2^{7/4} } \Bigl( \frac{k}{ \pi} \Bigr)^{1/2}  \frac{\sqrt{ \T_{\al', \be'}}}{\Om_{D \mu - 2 \la}}  $$
Then using that 
\begin{gather*} 
 \sin \Bigl( \frac{\pi k_{\al, \be} ^{-}}{a} \Bigr )\sin \Bigl( \frac{\pi k^{-}_{\al, \be} }{b} \Bigr) +  \sin \Bigl( \frac{\pi k_{\al, \be} ^{+}}{a} \Bigr )\sin \Bigl( \frac{\pi k^{+}_{\al, \be} }{b} \Bigr) = 0 
\end{gather*}
  we obtain that
$$  e^{ \frac{2 i \pi }{D} 2k ( \ell-1) }  \ga_{\ell -2 , \al' , \be'}(k) + \be_{\ell} (k) = 0$$
which gives the conclusion. 

To end the proof, we have to consider the set $\pi ( D^{-1}(a\Z\cup b \Z) \la)$. For these representations, the result follows from Theorem \ref{theo:abelien_irreductible} and the previous considerations on the $\pi ( I_{\ell})$'s.

\section{Other related asymptotics expansions} 

Theorem \ref{theo:witten} is a consequence of Theorems \ref{theo:intro}, \ref{theo:irreductible}, \ref{theo:abelian} and \ref{theo:abelien_irreductible} as was explained in \cite{LJ2}, Section 4.2. We can deduce more general asymptotics for the WRT invariants of the Dehn filling of the torus knots where the attached solid torus contains a banded knot, Section 4.3 of \cite{LJ2}. In particular we obtain asymptotic expansion for evaluation of the colored Jones polynomials as follows.

Let $( \xi_{\ell})$ be the basis of $\Hilb_k$ determined by Equations (\ref{eq:rep_M_L}). By definition of the knot state, we have that 
$$ 
J_{\ell} \bigl( -  e^{i \pi / 2k} \bigr)  = \frac{ \sqrt k }{ \sin ( \pi / k ) } \langle Z_k , \xi_\ell \rangle 
$$ 
We can evaluate the scalar product $\langle Z_k , \xi_\ell \rangle $ by using some pairing formula, which gives the following theorem.  

\begin{theo} \label{sec:Jones_pol_das}
Let $\ell \in \Z$ and $I$ be a closed interval such that $I \subset (\frac{\ell}{D}, \frac{\ell + 1}{D}  )$. Then for any integers $k>0$ and $m$ such that $m / 2k \in I$, we have
\begin{gather*} 
 \langle Z_k , \xi_m \rangle = \sum_{(\al , \be) \in {\mathcal{P}}_{\ell-1} \cup{\mathcal{P}}_\ell } \frac{2 e^{ i p_{\al, \be} \frac{\pi}{2}} }{\sqrt {ab} } \sin \Bigl( \frac{\pi \al}{a} \Bigr )\sin \Bigl( \frac{\pi \be}{b} \Bigr)   e ^{ -i \pi k  \frac{( k^-_{\al, \be}  - \frac{m}{2k}D )^{2}}{D}} \\
 + e^{ i (\frac{\pi}{4} + p_0 \frac{\pi}{2}) } k^{-1/2} \frac{ \sin ( \pi \frac{  m }{k} ) }{\Delta_K( e ^{ 2 i \pi \frac{m}{k} })} +O ( k^{-1}) 
\end{gather*} 
where $p_0$, $p_{\al, \be}$ are integers and the $O(k^{-1})$ is uniform with respect to $k$ and $m$. 
\end{theo}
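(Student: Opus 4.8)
The plan is to evaluate the scalar product $\langle Z_k, \xi_m\rangle$ by combining the asymptotic description of $Z_k$ obtained in Theorems \ref{theo:irreductible}, \ref{theo:abelian} and \ref{theo:abelien_irreductible} with a stationary-phase (Lagrangian-state pairing) computation against the basis vector $\xi_m$. First I would recall that $\xi_m$ is itself a Lagrangian state: by Remark \ref{sec:micro_support_base} applied to the lattice $\mu\Z\oplus\la\Z$, the microsupport of $(\xi_m)$ as $m/2k\to s$ is the line $\{-s\la\}+\la\Z+\R\mu$ (up to the identifications we have fixed), with an explicit WKB form derived from Equations (\ref{eq:estim_1_psi_0}) and (\ref{eq:estim_2_psi_0}). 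Hence $\langle Z_k,\xi_m\rangle$ localizes near the points of $E$ where the microsupport of $Z_k$ meets this line. When $m/2k\in I\subset(\tfrac{\ell}{D},\tfrac{\ell+1}{D})$, those points lie in $\pi(I_{\ell})$ or in the image of $(\R\setminus D^{-1}\Z)\la$, so only the contributions described by Theorems \ref{theo:irreductible} and \ref{theo:abelian} survive; everything on $A\setminus\bigcup_\ell I_\ell$ is $O(k^{-\infty})$.

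Next I would carry out the pairing term by term. The irreducible part of $Z_k$ near $\pi(I_\ell)$ is, by Theorem \ref{theo:irreductible}, a multiple $\bigl(\tfrac{k}{2\pi}\bigr)^{1/4} i\,\ga_\ell(k)$ of the Lagrangian state $T^*_{-\ell\la/D}t^k_{D\mu-2\la}\otimes\Om_{D\mu-2\la}$, whose underlying Lagrangian is the line $\ell\tfrac{\la}{D}+(D\mu-2\la)\R$. Pairing this against $\xi_m$ is a one-dimensional Gaussian integral over the transverse variable; the amplitude contributes the $(k/2\pi)^{1/4}$ factors from both states (which multiply to a $k^{1/2}$ after the $\sqrt{2\pi/k}$ from the Gaussian) while the phase, being quadratic in the position of the intersection point, produces the factor $e^{-i\pi k (k^-_{\al,\be}-\tfrac{m}{2k}D)^2/D}$ once one rewrites $\ga_\ell(k)$ via its closed form — here I would use the explicit value $\ga_{\ell,\al,\be}(k)$ in Equation (\ref{eq:coef}), the identification $\sum_{(\al,\be)\in\setindex_\ell}\ga_{\ell,\al,\be}(k)=\ga_\ell(k)$ proved in Section \ref{sec:topol-invar}, and the definition (\ref{eq:racine_carre_torsion}) of $\sqrt{\T_{\al,\be}}$, so that $\sqrt{\T_{\al,\be}}/\Om_{D\mu-2\la}$ becomes $2^{13/4}(\pi/ab)^{1/2}(-1)^\al\sin(\pi\al/a)\sin(\pi\be/b)$. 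Collecting constants gives the $2e^{ip_{\al,\be}\pi/2}(ab)^{-1/2}\sin(\pi\al/a)\sin(\pi\be/b)$ prefactor with $p_{\al,\be}$ absorbing all the unit-modulus phases ($i$, $\mu_k=1+O(k^{-1})$, the half-form phase conventions). The abelian part near $\la(\R\setminus D^{-1}\Z)$ is, by Theorem \ref{theo:abelian}, $\bigl(\tfrac{k}{2\pi}\bigr)^{1/4}t_\la^k\otimes f(\cdot,k)\Om_\la$ with $f_0(q\la)=\tfrac{e^{-i\pi/4}}{\sqrt2}\,(\si-\si^{-1})/\Delta_{a,b}(\si^2)$, $\si=e^{2i\pi q}$; its Lagrangian is $\R\la$, which meets the line supporting $\xi_m$ transversally at $q=-m/2k$, so the pairing is again a Gaussian integral whose leading term is $k^{-1/2}$ times $f_0$ evaluated there, i.e. $k^{-1/2}e^{i(\pi/4+p_0\pi/2)}\sin(\pi m/k)/\Delta_K(e^{2i\pi m/k})$ after writing $\si-\si^{-1}=2i\sin(\pi m/k)$ and $\si^2=e^{2i\pi m/k}$ and folding the phases into $p_0$.

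The main obstacle I expect is \emph{uniformity in $m$ and $k$}, i.e. showing the $O(k^{-1})$ is uniform as $m/2k$ ranges over the closed interval $I$. Individually each pairing is a routine stationary-phase estimate, but one must (i) control the Gaussian integrals uniformly when the intersection point drifts with $m/2k$, which is fine precisely because $\overline I$ stays away from the endpoints $\ell/D$ and $(\ell+1)/D$ where the two Lagrangians in Theorem \ref{theo:abelien_irreductible} collide (that is the role of the hypothesis $I\subset(\tfrac{\ell}{D},\tfrac{\ell+1}{D})$); (ii) handle the fact that $Z_k$ is a priori only described microlocally on neighborhoods, so one needs a covering of $\overline I$ by finitely many such neighborhoods and a uniform remainder on each, using the uniform $O(k^{-\infty})$ statements already in Theorems \ref{theo:irreductible} and \ref{theo:abelian}; and (iii) keep track of the index sets: near $m/2k\in I$ the line supporting $\xi_m$ can meet $\pi(I_{\ell-1})$ as well as $\pi(I_\ell)$ (since $I_{\ell-1}$ and $I_\ell$ overlap in the $\mu$-direction), which is exactly why the sum runs over $\setindex_{\ell-1}\cup\setindex_\ell$. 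Once these uniformity points are settled, assembling the three families of contributions yields the stated formula. I would invoke the pairing/stationary-phase machinery of \cite{oim03b, oim03} (as summarized in \cite{LJ1}) for the analytic input, exactly as in the proof recalled after Theorem \ref{theo:abelian}.
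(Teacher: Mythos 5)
Your proposal takes essentially the same route as the paper: the paper's proof is a one-line application of the Lagrangian-state pairing machinery (Theorem 6.4 of \cite{LJ1} and Theorem 4.6 of \cite{LJ2}) to the asymptotic descriptions of $Z_k$ from Theorems \ref{theo:irreductible}, \ref{theo:abelian} and \ref{theo:abelien_irreductible}, which is exactly the stationary-phase pairing against $\xi_m$ that you carry out, including the correct identification of the index set $\setindex_{\ell-1}\cup\setindex_{\ell}$ and of the role of $I\subset(\tfrac{\ell}{D},\tfrac{\ell+1}{D})$ for uniformity. (Only a cosmetic remark: your parenthetical power count is slightly off — the pairing integral over the two-dimensional fundamental domain contributes $O(k^{-1})$, so that $k^{1/4}\cdot k^{1/2}\cdot k^{1/4}\cdot k^{-1}=O(1)$ for the irreducible terms — but this does not affect the argument.)
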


This is an application of Theorem 4.6 in \cite{LJ2} and Theorem 6.4 in \cite{LJ1}. 

\bibliography{biblio}

\end{document}